\documentclass{article}
\usepackage[british]{babel}
\usepackage{color, amsmath, amssymb, amsthm, stmaryrd, graphicx, wrapfig, url, calc}
\usepackage[all]{xy}
\usepackage[a4paper,body={16cm,23cm}]{geometry}

\definecolor{dblue}{rgb}{0,0,0.7}
\newtheoremstyle{mythm}{11pt}{11pt}{\it\color{dblue}}{}{\bf\color{dblue}}{.}{ }{}
\theoremstyle{mythm}
\newtheorem{thm}{Theorem}
\newtheorem{prop}[thm]{Proposition}
\newtheorem{cor}[thm]{Corollary}
\newtheorem{lem}[thm]{Lemma}

\theoremstyle{definition}
\newtheorem*{defi}{Definition}

\renewcommand{\geq}{\geqslant}
\renewcommand{\leq}{\leqslant}

\DeclareMathOperator{\im}{im}
\DeclareMathOperator{\tr}{tr}

\DeclareMathOperator{\coker}{coker}

\DeclareMathOperator{\End}{End}
\DeclareMathOperator{\Aut}{Aut}
\DeclareMathOperator{\Gal}{Gal}
\DeclareMathOperator{\GL}{GL}
\DeclareMathOperator{\SL}{SL}
\DeclareMathOperator{\PGL}{PGL}
\DeclareMathOperator{\divi}{Div}
\DeclareMathOperator{\Pic}{Pic}
\DeclareMathOperator{\Jac}{Jac}

\newcommand{\FF}{\mathbb{F}}
\newcommand{\QQ}{\mathbb{Q}}
\newcommand{\ZZ}{\mathbb{Z}}
\newcommand{\PP}{\mathbb{P}}
\newcommand{\CC}{\mathbb{C}}

\newcommand{\Xn}{X_{\textnormal{nsp}}}
\newcommand{\Xs}{X_{\textnormal{sp}}}
\newcommand{\Yn}{Y_{\textnormal{nsp}}}

\newcommand{\pin}{\pi_{\textnormal{nsp}}}
\newcommand{\pis}{\pi_{\textnormal{sp}}}
\newcommand{\jn}{j_{\textnormal{nsp}}}
\newcommand{\js}{j_{\textnormal{sp}}}
\newcommand{\ion}{\iota_{\textnormal{nsp}}}
\newcommand{\ios}{\iota_{\textnormal{sp}}}
\newcommand{\Xon}{X_{0,\textnormal{nsp}}}
\newcommand{\piin}{\tilde{\pi}_{\textnormal{nsp}}^{+}}

\newcommand{\Fp}{\FF_{\! p}}
\newcommand{\Fps}{\FF_{\!p^2}}
\newcommand\ma[4]{\begin{smallmatrix} #1 & #2 \\ #3 & #4 \end{smallmatrix}}
\newcommand{\antip}{\leftrightarrowtriangle}
\newcommand{\leg}[2]{\genfrac(){0.2pt}0{#1}{#2}}

\begin{document}

\title{A moduli interpretation for the non-split Cartan modular curve}

\author{Marusia Rebolledo and Christian Wuthrich}

\maketitle

\begin{abstract}
 Modular curves like $X_0(N)$ and $X_1(N)$ appear very frequently in arithmetic geometry. While their complex points are obtained as a quotient of the upper half plane by some subgroups of $\SL_2(\ZZ)$, they allow for a more arithmetic description as a solution to a moduli problem. We wish to give such a moduli description for two other modular curves, denoted here by $\Xn(p)$ and $\Xn^{+}(p)$ associated to non-split Cartan subgroups and their normaliser in $\GL_2(\Fp)$. These modular curves appear for instance in Serre's problem of classifying all possible Galois structures of $p$-torsion points on elliptic curves over number fields. We give then a moduli-theoretic interpretation and a new proof of a result of Chen~\cite{chen1,chen2}.
\end{abstract}

\section{Introduction}

 Let $p$ be an odd prime. Let $Y(p)$ be the affine modular curve classifying elliptic curves with full level $p$ structure. The completed modular curve $X(p)$ classifies generalised elliptic curves  with full level $p$ structure.  Those two curves admit integral models over the ring of integers of the cyclotomic field $\QQ(\zeta_p)$. See ~\cite{deligne_rapoport} and~\cite{katz_mazur}. The modular curve $X(p)$ comes equipped with a natural action by $\GL_2(\Fp)$. For any subgroup $\mathcal{H}$ of $\GL_2(\Fp)$ the quotient $X(p)/\mathcal H$ defines an algebraic curve $X_{\mathcal H}$ over $\QQ(\zeta_p)^{\det(\mathcal H)}$.  Hence the points on $X_{\mathcal H}$ over an algebraically closed field $\bar k$ of characteristic different from $p$ are $\mathcal H$-orbits of $\bar k$-points of $X(p)$. However in some interesting cases, there is a nice description of a moduli problem for $X_{\mathcal H}$ too.

 As an example, we explain the case when $\mathcal{H}$ is the Borel subgroup $\mathcal{B} = \bigl\{ (\ma{*}{*}{0}{*})\bigr\}$ in $\GL_2(\Fp)$. First, the points in $Y(p)(\bar k)$ are $\bar k$-isomorphism classes of pairs $\bigl(E,(P,Q)\big)$ where $E/\bar k$ is an elliptic curve and $(P,Q)$ form a basis of $E[p]$. For a fixed $E$, all the pairs $(P',Q')$ in the $\mathcal{B}$-orbit of $(P,Q)$ are such that $P'$ is in the subgroup $C$ generated by $P$. Hence the $\bar k$-points on the quotient curve $Y_{\mathcal B}$ can be identified with $\bar k$-isomorphism classes of pairs $(E,C)$ with $E$ again an elliptic curve defined over $\bar k$ and $C$ a cyclic subgroup of order $p$ in $E[p]$. The latter description is now independent of the initial choice of the Borel subgroup $\mathcal B$ in $\GL_2(\Fp)$ and only uses the geometry of $E$. The curve $X_{\mathcal B}$ is usually denoted by $X_0(p)$.

 Another example is the quotient by the split Cartan subgroup which consists of diagonal matrices in $\GL_2(\Fp)$. The corresponding curve, denoted here by $\Xs(p)$,  parametrises $\bar k$-isomorphism classes $\bigl(E,(A,B)\bigr)$ of generalised elliptic curves $E$ endowed with two distinct cyclic subgroups $A$ and $B$ of order $p$ in $E$. For its normaliser $\mathcal S$, the corresponding curve $X_{\mathcal S} = \Xs^{+}(p)$ classifies generalised elliptic curves with an unordered pair $\{A,B\}$ of cyclic subgroups $A$ and $B$ of order $p$. All these cases are easy to describe because the subgroups $\mathcal H$ can be defined as the stabiliser of some object under a natural action of $\GL\bigl(E[p]\bigr)$.

 In view of Serre's problem to classify the possible Galois module structure of the $p$-torsion of an elliptic curve  over a number field, there are two further modular curves of importance. The aim of this paper is to give a good moduli description for those, namely when $\mathcal{H}$ is a non-split Cartan subgroup or a normaliser of a non-split Cartan subgroup in $\GL_2(\Fp)$. We will denote the corresponding modular curves by $\Xn(p)$ and  $\Xn^{+}(p)$ respectively. See the start of Section~\ref{description_sec} for detailed definitions. These curves have been studied for instance by Ligozat~\cite{ligozat}, Halberstadt~\cite{halberstadt}, Chen~\cite{chen1,chen2}, Merel and Darmon~\cite{merel,darmon_merel} and Baran~\cite{baran}.

 In our description, the modular curve $\Xn^{+}(p)$ will classify elliptic curves endowed with a level structure that we call a \emph{necklace}. Roughly speaking, a necklace is a regular $(p+1)$-gon whose corners, called \emph{pearls}, are all cyclic subgroups of order $p$ in $E$ and such that there is an element in $\PGL\bigl(E[p]\bigr)$ that turns this necklace by one pearl. This will not depend on the choice of a non-split Cartan subgroup.

 In Section~\ref{description_sec}, we define these necklaces and give our moduli description in detail (as well as an alternative and more geometric description using the cross-ratio in $\PP\bigl(E[p]\bigl)$). We also compare this to other moduli descriptions in the literature. In this article, we exclude the case where the base field is of characteristic $p$. In other words, we are treating the modular curves only over $\ZZ[\tfrac{1}{p}]$. We expect however that the moduli problem given here will help to understand the special fibre at $p$, too.

 Section~\ref{geometry_sec} shows how classical results about the geometry of $\Xn(p)$ can be proven using this moduli interpretation. For instance, we can count the number of elliptic points, describe the cusps and the degeneracy maps.

 In~\cite{chen1,chen2}, Chen showed that there is an isogeny between the Jacobian of $\Xs^{+}(p)$ and the product of the Jacobians of  $X_0(p)$ and $\Xn^{+}(p)$. In Section~\ref{chen_sec}, we give a new proof of this theorem using necklaces. This gives an explicit and geometric vision of the maps involved. We conclude the paper with some numerical data related to Chen's Theorem.

 Since the prime $p$ is fixed throughout the paper, we will now omit it from the notations and only write $\Xn$ and $\Xn^{+}$. It is to note that there should be no real difficulty in generalising our moduli description to composite levels $N$. With view on the problem of Serre to classify the Galois structure of $p$-torsion subgroups of elliptic curves over $\QQ$,  prime levels are maybe the most interesting.


\subsection*{Notations}

 The following is a list of modular curves that appear in this paper and the notations we frequently use. The definitions will be given later. See also Sections~\ref{degeneracy_sec} and~\ref{chen_def_subsec} for degeneracy maps and correspondences between them.

 \begin{center}
   \begin{tabular}{cll}
      Symbol    & Description of $\mathcal H < \GL_2(\Fp)$   & Level structure  \\
          \hline && \\[-2ex]
      $X(p)$    & Full level structure, $\mathcal H = \{1\}$ & Basis $(P,Q)$ of $E[p]$  \\
       $ X_{\mathcal A}$ & Scalar matrices $\mathcal A$ & Distinct triple $(A,B,C)$ in $\PP\bigl(E[p]\bigr)$  \\
      $X_0$     & A Borel subgroup $\mathcal B$ & Subgroup $C\in\PP\bigl(E[p]\bigr)$  \\
      $\Xs$     & A split Cartan  & Distinct pair $(A,B)$ in $\PP\bigl(E[p]\bigr)$ \\
      $\Xs^{+}$ & Normaliser of a split Cartan  $\mathcal S$ & Non-ordered pair $\{A,B\}\subset \PP\bigl(E[p]\bigr)$ \\
       $\Xn$     & A non-split Cartan & Oriented necklace $\mathfrak v$ \\
      $\Xn^{+}$ & Normaliser of a non-split Cartan $\mathcal N$ & Necklace $\mathfrak v$
   \end{tabular}
 \end{center}

 Matrices in $\GL_2(\Fp)$ will be written as $\bigl(\ma{\cdot}{\cdot}{\cdot}{\cdot}\bigr)$, while their class in $\PGL_2(\Fp)$ will be represented by$\bigl[\ma{\cdot}{\cdot}{\cdot}{\cdot}\bigr]$. As in the introduction, we will denote the action of $\GL_2(\Fp)$ over $X(p)$ on the right. Instead, if $V$ is a two-dimensional  $\Fp$-vector space,  the action on $V$ of $\GL(V)$  or, after a choice of basis, of $\GL_2(\Fp)$,  will be considered on the left. 


\section{The moduli problem of necklaces}\label{description_sec}

\subsection{Non-split Cartan subgroups and their modular curves}\label{cartan_subsec}

 We refer to~\cite{serre} for definitions and results about non-split Cartan subgroups and Dixon's classification of maximal subgroups of $\GL_2(\Fp)$ and just briefly recall some facts. The group $\GL_2(\Fp)$ acts on the right on $\PP^1(\Fps)$ by $(x:y)\bigl(\ma{a}{b}{c}{d} \bigr)= (ax+cy:bx+dy)$. Any non-split Cartan subgroup of $\GL_2(\Fp)$ can be defined as the stabiliser  $\mathcal H_{\alpha}$ of $(1:\alpha)$ in $\PP^1(\Fps)\setminus \PP^1(\Fp)$ for a choice of $\alpha \in \Fps\setminus\Fp$. We see that $\mathcal H_{\alpha}$ has order $p^2-1$ as the action of $\GL_2(\Fp)$ is transitive on $\PP^1(\Fps)\setminus \PP^1(\Fp)$.

 Alternatively, we can consider the basis $(1,\alpha)$ of $\Fps$ as a $\Fp$-vector space. Then we claim that $\mathcal H_\alpha$ is equal to the image of the map $i_\alpha \colon \Fps^{\times}\to \GL_2(\Fp)$ sending $\beta$ to the matrix which represents the multiplication by $\beta$ on $\Fps$ written in basis $(1,\alpha)$.
 Indeed, let $\beta = x +y \alpha\in\Fps^{\times}$ with $x,y\in\Fp$. If $X^2 - t X +n$ is the minimal polynomial of $\alpha$ over $\Fp$, then
 \begin{equation*}
   i_\alpha(\beta) = \begin{pmatrix} x & -ny \\ y & x+ty \end{pmatrix}
 \end{equation*}
 and so $(1:\alpha)i_\alpha(\beta) = (x+y\alpha : -ny + (x+ty)\alpha) = (\beta:\beta \alpha) = (1:\alpha) $. So the image of $i_\alpha$ is contained in $\mathcal H_\alpha$ and they are equal because they are of the same size.

 Given a choice of a non-split Cartan subgroup $\mathcal H$, we define the modular curve $\Xn$ as the quotient $X_{\mathcal H}$. Note that the quotient does not depend on the choice of $\mathcal H$ as these subgroups are all conjugate. However the description of points on $\Xn$ as $\mathcal H$-orbits do.

 The normaliser $\mathcal N$ of a non-split Cartan subgroup $\mathcal H$ in $\GL_2(\Fp)$ contains $\mathcal H$ with index $2$. It can be viewed as adding the image under $i_\alpha$ of the conjugation map in $\Gal(\Fps/\Fp)$ on $\Fps^{\times}$. The corresponding quotient $X_{\mathcal N}$ will be denoted by $\Xn^{+}$.


\subsection{Necklaces}\label{subsec:necklaces}

 Let $\gamma$ be a multiplicative generator of $\Fps^{\times}$. For any $2$-dimensional $\Fp$-vector space $V$, we define $\mathcal{C}_{\gamma}$ to be the conjugacy class in $\PGL(V)$ of all elements $h$ which have a representative in $\GL(V)$ whose characteristic polynomial is equal to the minimal polynomial of $\gamma$. In other words, all representatives of $h\in \mathcal{C}_{\gamma}$ have an eigenvalue in $\Fp^{\times}\cdot\gamma$. The set $\mathcal{C}_\gamma$ depends only on $\Fp^\times.\gamma$.  If $\bar\gamma$ is the conjugate of $\gamma$ over $\Fp$, then $\mathcal{C}_{\bar\gamma}=\mathcal{C}_{\gamma}$. If a basis of $V$ is chosen then $\mathcal{C}_{\gamma}$ consist of all classes of matrices $i_{\alpha}(\gamma)$ as $\alpha$ runs through $\Fps\setminus\Fp$. Note that the class of  $i_{\alpha}(\bar\gamma)=i_{\bar{\alpha}}(\gamma) = \operatorname{N}(\gamma)^{-1}\, i_\alpha(\gamma)^{-1}$ is equal to the inverse of class of $i_\alpha(\gamma)$. In particular, in any non-split Cartan subgroup in $\PGL(V)$, there are exactly two generators $h$ and $h^{-1}$ that belong to $\mathcal{C}_{\gamma}$. As $\gamma$ varies, we obtain the $\tfrac{1}{2}\varphi(p+1)$ conjugacy classes of elements of order $p+1$.

 \begin{wrapfigure}{r}{43mm}%
   \vspace{-10pt}\centering%
   \includegraphics[width=4cm]{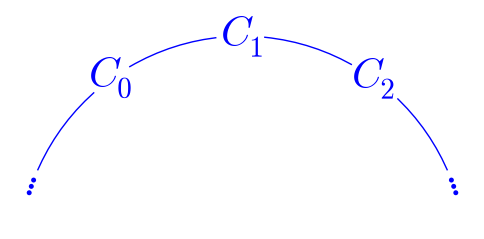}%
   \vspace{-10pt}%
 \end{wrapfigure}
 Let $\bar k$ be an algebraically closed field of characteristic 0 or different from $p$ and let $E/\bar k$ be an elliptic curve. We know that there exists $p+1$ cyclic subgroups of order $p$ in $E[p]$. We will consider  lists  $(C_0,C_1,\dots, C_p)$ of those cyclic subgroups and will say that two such lists are  equivalent if we can obtain one from the other by a cyclic permutation; so $(C_0,C_1,\dots, C_p)$ and $(C_1,C_2,\dots, C_p,C_0)$ are equivalent.

Recall that we will denote on the left the action by homography of  $\PGL(E[p])$ on $\PP(E[p])$.
 \begin{defi}
   An equivalence class $(C_0,C_1,\dots, C_p)$ is called an \textit{oriented $\gamma$-necklace} of $E$ if there exists an element $h\in\mathcal{C}_\gamma\subset \PGL(E[p])$ such that $h(C_i) =  C_{i+1}$ for all $i=0,\dots, p-1$.
 \end{defi}

 If $h$ is such an element, then we must also have $h(C_p) = C_0$ as $h$ is of order $p+1$. Note also that if $(C_0,C_1,\dots, C_p)$ is an oriented necklace with a certain $h\in \mathcal{C}_\gamma$, then so is $(C_p,C_{p-1},\dots, C_0)$ because $h^{-1}\in\mathcal{C}_{\gamma}$.

 Let us consider the dependence on the choice of $\gamma$.

 \begin{lem}\label{gamma_lem}
   Let $\gamma$ and $\gamma'$ be two generators of $\Fps^{\times}$. There is a canonical bijection between oriented $\gamma$-necklaces and oriented $\gamma'$-necklaces.
 \end{lem}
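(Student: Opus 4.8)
The plan is to reduce the statement to a transparent power map between the two conjugacy classes $\mathcal{C}_\gamma$ and $\mathcal{C}_{\gamma'}$ in $\PGL(E[p])$. First I would observe that an oriented $\gamma$-necklace is the same datum as a single element $h\in\mathcal{C}_\gamma$. Indeed, any $h\in\mathcal{C}_\gamma$ has its eigenvalues in $\Fps\setminus\Fp$, hence no fixed point on $\PP\bigl(E[p]\bigr)=\PP^1(\Fp)$; since it has order $p+1=\#\PP^1(\Fp)$, it acts as a single $(p+1)$-cycle. Reading off the orbit $(C_0,hC_0,\dots,h^pC_0)$ of any pearl $C_0$ thus produces an oriented $\gamma$-necklace, independent of $C_0$ up to cyclic permutation. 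Conversely, the relations $h(C_i)=C_{i+1}$ determine $h$ completely as a permutation of $\PP^1(\Fp)$, hence as an element of $\PGL\bigl(E[p]\bigr)$ since the action is faithful; so the necklace determines its witness $h$ uniquely. This yields a bijection between oriented $\gamma$-necklaces and $\mathcal{C}_\gamma$, and likewise for $\gamma'$.

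Next I would build the bijection $\mathcal{C}_\gamma\to\mathcal{C}_{\gamma'}$ by raising to a power. Write $\gamma'=\gamma^k$; as both are generators of $\Fps^{\times}$ we have $\gcd(k,p^2-1)=1$ and in particular $\gcd(k,p+1)=1$. For $h\in\mathcal{C}_\gamma$ take the representative $\tilde h\in\GL\bigl(E[p]\bigr)$ whose characteristic polynomial is the minimal polynomial of $\gamma$, so $\tilde h$ has eigenvalues $\gamma,\gamma^p$. Then $\tilde h^k$ has eigenvalues $\gamma^k=\gamma'$ and $\gamma^{pk}=(\gamma')^p$, so its characteristic polynomial is the minimal polynomial of $\gamma'$ and $h^k\in\mathcal{C}_{\gamma'}$. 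Because $h$ has order $p+1$, the class $h^k$ depends only on $k\bmod(p+1)$; choosing $k'$ with $kk'\equiv 1\pmod{p+1}$ gives $(h^k)^{k'}=h$, so $h\mapsto h^k$ and $g\mapsto g^{k'}$ are mutually inverse bijections on elements of order $p+1$. The same eigenvalue computation shows the second map sends $\mathcal{C}_{\gamma'}$ into $\mathcal{C}_\gamma$, so they restrict to mutually inverse bijections $\mathcal{C}_\gamma\leftrightarrow\mathcal{C}_{\gamma'}$.

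Transporting this through the identification of the first paragraph gives the stated correspondence: the $\gamma$-necklace $(C_0,C_1,\dots,C_p)$ with witness $h$ is sent to the $\gamma'$-necklace $(C_0,C_k,C_{2k},\dots)$ with witness $h^k$, obtained by keeping every $k$-th pearl. The recipe depends only on $\gamma$ and $\gamma'$ through $k\bmod(p+1)$ and on no auxiliary choice, so it is canonical. A reassuring check is the case $\gamma'=\bar\gamma=\gamma^p$: then $k\equiv-1\pmod{p+1}$ and the map is the reversal $h\mapsto h^{-1}$, which matches the earlier remark that reversing a necklace again yields a necklace.

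I expect the only genuine subtlety to be well-definedness, in two places. First, one must make sure the witness $h$ is truly unique, so that \emph{oriented $\gamma$-necklace} and \emph{element of $\mathcal{C}_\gamma$} really coincide. Second, one must reconcile the fact that $k$ is naturally defined modulo $p^2-1$ while the power map on $\PGL\bigl(E[p]\bigr)$ only sees $k$ modulo $p+1$; the point making these compatible is that $\Fp^{\times}=\langle\gamma^{p+1}\rangle$, so that membership in $\mathcal{C}_{\gamma'}$ indeed depends only on $k\bmod(p+1)$. Everything else is the direct eigenvalue bookkeeping above.
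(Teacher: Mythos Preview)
Your argument is correct and follows the same route as the paper: write $\gamma'=\gamma^k$ with $\gcd(k,p+1)=1$, observe that $\mathcal{C}_{\gamma'}=\{h^k:h\in\mathcal{C}_\gamma\}$, and read off the necklace bijection $(C_0,C_1,\dots,C_p)\mapsto(C_0,C_k,C_{2k},\dots)$; your extra step of identifying oriented $\gamma$-necklaces with elements of $\mathcal{C}_\gamma$ is a clean way to package this. One small slip worth fixing: from ``$h$ has order $p+1$ and no fixed point on $\PP^1(\Fp)$'' alone it does not follow that $h$ acts as a single $(p+1)$-cycle (for instance, cycle type $(2,3,3,4)$ in $S_{12}$ has order $12$ and no fixed point); what you actually need is that \emph{every} nontrivial power $h^j$ has no fixed point, which holds since $\gamma^j\notin\Fp$ for $0<j<p+1$, so the cyclic group $\langle h\rangle$ acts freely and hence simply transitively.
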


 \begin{proof}
   Since $\Fps^{\times}$ is cyclic, there exists an integer $k\in [0,p^2-1]$ such that $\gamma'=\gamma^k$ and such that $k$ is coprime to $p+1$. In particular $\mathcal{C}_{\gamma'}$ is the set of all $h^k$ with $h\in\mathcal{C}_{\gamma}$. So the requested bijection is given by
   \begin{align*}
     \{\text{oriented $\gamma$-necklaces}\} &\to  \{\text{oriented $\gamma'$-necklaces}\}\\
     (C_0,C_1,\dots, C_p)                   &\mapsto (C_0,C_k,C_{2k},\dots )    \end{align*}
   with the index taken modulo $p+1$.
 \end{proof}

 As a consequence, we may now fix a choice of $\gamma$ for the rest of the paper and call the oriented $\gamma$-necklaces simply \emph{oriented necklaces}.

 In a picture, we arrange the subgroups $C_0$, \dots, $C_{p}$ like pearls on a necklace that can be turned around the neck using the automorphism $h$ of $\PP(E[p])$. If we allow the necklace to be worn in both directions, we get the notion of a necklace without orientation:

 \begin{defi}
   Let $w$ denote the involution defined by
   \begin{equation*}
     w(C_0,C_1,\dots,C_p)=(C_p, C_{p-1},  \dots, C_0)
   \end{equation*}
   which changes the orientation of an oriented necklace. A \textit{necklace} is a $w$-orbit of oriented necklaces $\{\mathfrak v, w(\mathfrak v)\}$.
 \end{defi}

 \begin{lem}\label{g_lem}
   Fix a generator $\gamma$ of $\Fps^{\times}$. Let $C_0$, $C_1$, and $C_2$ be three distinct cyclic subgroups of order $p$ in $E[p]$. Then there exists a unique element $h\in\mathcal{C}_\gamma$ in $\PGL(E[p])$ such that $h(C_0)=C_1$ and $h(C_1)=C_2$.
 \end{lem}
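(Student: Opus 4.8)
The plan is to reduce the statement to a one-variable computation in $\PGL(E[p])\cong\PGL_2(\Fp)$ acting on the $p+1$ points of $\PP(E[p])\cong\PP^1(\Fp)$, which is where the cyclic subgroups of order $p$ live. Writing $X^2 - t\,X + n$ for the minimal polynomial of $\gamma$, with $t=\tr(\gamma)$ and $n=\operatorname{N}(\gamma)$, the crucial preliminary observation is that the quantity $j(h):=\tr(h)^2/\det(h)$ is well defined on $\PGL(E[p])$ (rescaling a representative by $\lambda$ multiplies $\tr$ by $\lambda$ and $\det$ by $\lambda^2$, so $j$ is unchanged) and that
\[
   h\in\mathcal C_\gamma \iff j(h)=t^2/n .
\]
Indeed, if $j(h)=t^2/n$ I may pick a representative $M\in\GL(E[p])$ and rescale it to $\lambda M$ with $\lambda=t/\tr(M)$; then $\tr(\lambda M)=t$ and $\det(\lambda M)=\lambda^2\det(M)=t^2/j(h)=n$, so $\lambda M$ has characteristic polynomial $X^2-t\,X+n$ and hence $h\in\mathcal C_\gamma$. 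The converse is immediate from the definition of $\mathcal C_\gamma$. This rescaling is the one delicate point, and it only makes sense because $t\neq 0$.

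I would therefore first check that $t\neq 0$. If $t=\gamma+\gamma^{p}$ vanished, then $\gamma^{p-1}=-1$, so the order of $\gamma$ would divide $2(p-1)$; but $\gamma$ generates $\Fps^{\times}$, so its order is $p^2-1=(p-1)(p+1)$, forcing $(p+1)\mid 2$, which is impossible for the odd prime $p$. Hence $t\neq 0$, and also $n\neq 0$, so $t^2/n\neq 0$.

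With this in hand the computation is short. Choose a basis $(e_0,e_1)$ of $E[p]$ with $C_0=\langle e_0\rangle$ and $C_1=\langle e_1\rangle$; since $C_2$ is distinct from both, after rescaling $e_0,e_1$ I may assume $C_2=\langle e_0+e_1\rangle$. An element $h$ with $h(C_0)=C_1$ sends $e_0$ into $C_1$, and $h(C_1)=C_2$ sends $e_1$ into $C_2$, so in this basis $h$ is represented by $\bigl[\ma{0}{1}{c}{1}\bigr]$ for some $c\in\Fp^{\times}$, and conversely every such matrix satisfies the two incidence conditions. This is the one-parameter family of candidates. Now $\tr=1$ and $\det=-c$, so $j(h)=-1/c$, and the membership condition $j(h)=t^2/n$ becomes the single equation $-1/c=t^2/n$, whose unique solution $c=-n/t^2$ is nonzero because $t\neq 0\neq n$. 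This produces exactly one $h\in\mathcal C_\gamma$ with $h(C_0)=C_1$ and $h(C_1)=C_2$, giving both existence and uniqueness.

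The main obstacle is not the final computation but setting up the clean invariant-theoretic criterion $h\in\mathcal C_\gamma\iff j(h)=t^2/n$: one must verify that matching the single scalar $j$ really forces membership in the specific class $\mathcal C_\gamma$, rather than merely some class sharing the value $\tr^2/\det$, and this is exactly what the rescaling argument secures, crucially relying on $t\neq 0$. Everything else is a routine normalisation of the triple $(C_0,C_1,C_2)$ followed by a linear equation.
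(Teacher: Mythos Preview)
Your proof is correct and follows essentially the same route as the paper's: pick a basis adapted to $C_0$ and $C_1$ and reduce the problem to a single equation in one parameter. The only cosmetic differences are that you also normalise $C_2=\langle e_0+e_1\rangle$ and phrase membership in $\mathcal C_\gamma$ via the projective invariant $j(h)=\tr(h)^2/\det(h)$, whereas the paper instead fixes a lift with $\tr=t$ and then determines the remaining entries; your explicit check that $t\neq 0$ is a useful addition, since the paper's argument silently relies on it.
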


 \begin{proof}
   Choose generators $P_0$ and $P_1$ in $C_0$ and $C_1$ respectively and consider the basis $(P_0,P_1)$ of $E[p]$. Write $t$ and $n$ for the trace and the norm of $\gamma$.  The elements $h$ of $\mathcal C_\gamma$ which verify $h(C_0)=C_1$ are   of the form $[\ma{0}{y}{-ny^{-1}}{t}]$ with $y\in \Fp^{\times}$. Moreover, as $y$ varies, the points $yP_0+tP_1$ form an affine line and hence there is a unique $y\in \Fp^{\times}$ such that $yP_0+tP_1$ belongs to $C_2$. Indeed, note that $t\neq 0$ because if it were, $\gamma$ would be of order dividing $2(p-1)$. Hence, there is a unique $h\in\mathcal C_\gamma$ such that $h(C_0)=C_1$ and $h(C_1)=C_2$.
%
 \end{proof}

 This lemma implies that for any triple $(C_0, C_1, C_2)$ of distinct cyclic subgroups of $E[p]$, there is a unique oriented necklace of the form $(C_0,C_1,C_2,\dots)$. We will denote it by $C_0\to C_1\to C_2$. Similarly there is a unique necklace with consecutive pearls $C_0,C_1,C_2$, which we denote by $C_0 - C_1 - C_2$.

 There is a natural action  of $\PGL(E[p])$ on the set of oriented necklaces by setting $(C_0,\dots C_p).g = \bigl(g(C_0), \dots, g(C_p)\bigr)$ for $g$ in $\PGL(E[p])$. If $h\in\mathcal{C}_{\gamma}$ is such that $h(C_i)=C_{i+1}$ then $ghg^{-1}\in \mathcal{C}_{\gamma}$ can be used to show that $\bigl(g(C_0), \dots, g(C_p)\bigr)$ is indeed an oriented necklace. Since the action of $\PGL(E[p])$ on $\PP(E[p])$ is simply $3$-transitive, Lemma~\ref{g_lem} implies that the action of $\PGL(E[p])$ on oriented necklaces  is transitive. By definition, for every oriented necklace $\mathfrak v$, there exists  $h\in \mathcal{C}_{\gamma}$ fixing it. Therefore, the group generated by $h$, which is a non-split Cartan subgroup in $\PGL(E[p])$ will belong to the stabiliser of $\mathfrak v$. It is clear that this is equal to the stabiliser of $\mathfrak v$. We have shown:

 \begin{cor}\label{gset_cor}
   Let $\mathcal G=\PGL(E[p])$. The set of oriented $\gamma$-necklaces is isomorphic as a $\mathcal G$-set to $\mathcal G/\mathcal H$ where $\mathcal H$ is any non-split Cartan group in $\mathcal G$. Similarly, the set of $\gamma$-necklaces is $\mathcal G$-isomorphic to $\mathcal G/\mathcal N$ for the normaliser of a non-split Cartan group $\mathcal N$ in $\mathcal G$. In particular, there are exactly $p(p-1)$ oriented necklaces and $p(p-1)/2$ necklaces.
 \end{cor}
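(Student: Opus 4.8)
The plan is to invoke the orbit--stabiliser theorem, since the preceding discussion already establishes that $\mathcal G = \PGL(E[p])$ acts transitively on oriented $\gamma$-necklaces. All that remains is to pin down the stabiliser of a fixed oriented necklace $\mathfrak v = (C_0, C_1, \dots, C_p)$ precisely, then repeat the argument for unoriented necklaces, and finally read off the cardinalities from the order of $\PGL_2(\Fp)$.

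First I would determine $\Stab(\mathfrak v)$. Suppose $g \in \mathcal G$ fixes $\mathfrak v$ as an equivalence class; then $(g(C_0), \dots, g(C_p))$ is a cyclic rotation of $(C_0, \dots, C_p)$, so there is an index $j$ with $g(C_i) = C_{i+j}$ for all $i$ modulo $p+1$. Letting $h \in \mathcal{C}_\gamma$ be the element fixing $\mathfrak v$ with $h(C_i) = C_{i+1}$, the power $h^j$ acts on the three distinct subgroups $C_0, C_1, C_2$ exactly as $g$ does. Because $\mathcal G$ acts simply $3$-transitively on $\PP(E[p])$, this forces $g = h^j$, and hence $\Stab(\mathfrak v) = \langle h \rangle = \mathcal H$. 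This is the point the text passes over with ``it is clear'', and it is the one place where simple $3$-transitivity is genuinely used; I expect it to be the only real obstacle. Conversely every power of $h$ fixes $\mathfrak v$, so the oriented necklaces form the $\mathcal G$-set $\mathcal G/\mathcal H$.

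For the unoriented case I would observe that $\Stab\bigl(\{\mathfrak v, w(\mathfrak v)\}\bigr)$ contains $\mathcal H$ with index at most $2$, since it permutes the two-element set $\{\mathfrak v, w(\mathfrak v)\}$. To see the index is exactly $2$ and that the stabiliser is $\mathcal N$, pick $n \in \mathcal N \setminus \mathcal H$. Conjugation by $n$ realises the nontrivial element of $\Gal(\Fps/\Fp)$, which by the identity $i_\alpha(\bar\gamma) = \operatorname{N}(\gamma)^{-1}\, i_\alpha(\gamma)^{-1}$ recorded above sends the class $h$ to $h^{-1}$. Thus $n\cdot\mathfrak v$ is the oriented $\gamma$-necklace fixed by $h^{-1}$; since a non-split Cartan in $\PGL(E[p])$ has no fixed point on $\PP(E[p])$, the element $h^{-1}$ of order $p+1$ acts as a single $(p+1)$-cycle, so this oriented necklace is unique and equals $w(\mathfrak v)$. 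Hence $n$ swaps $\mathfrak v$ and $w(\mathfrak v)$ and lies in the stabiliser. Comparing indices with $[\mathcal N : \mathcal H] = 2$ then forces $\Stab\bigl(\{\mathfrak v, w(\mathfrak v)\}\bigr) = \mathcal N$, giving the $\mathcal G$-isomorphism with $\mathcal G/\mathcal N$.

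Finally, the counts reduce to a routine order computation. One has $|\PGL_2(\Fp)| = p(p-1)(p+1)$, while the image of a non-split Cartan in $\PGL(E[p])$ has order $(p^2-1)/(p-1) = p+1$ and its normaliser has order $2(p+1)$. Dividing yields $p(p-1)$ oriented necklaces and $p(p-1)/2$ necklaces, as claimed.
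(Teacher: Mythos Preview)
Your proof is correct and follows the same orbit--stabiliser approach as the paper: the paper's argument is the paragraph immediately preceding the corollary, which establishes transitivity via $3$-transitivity together with Lemma~\ref{g_lem} and then asserts that the stabiliser equals $\langle h\rangle$ (``it is clear''). You simply make that step explicit by matching $g$ and $h^j$ on three points, and you spell out the unoriented case---which the paper leaves entirely implicit---by exhibiting an element of $\mathcal N\setminus\mathcal H$ that swaps $\mathfrak v$ and $w(\mathfrak v)$; so your write-up is a faithful expansion of the paper's sketch rather than a different route.
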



\subsection{Moduli description}\label{moduli_subsec}

 Let $\mathcal H$ be a non-split Cartan subgroup in $\mathcal G = \GL_2(\Fp)$ and write $\mathcal N$ for its normaliser. Let $\bar k$ be an algebraically closed field of characteristic different from $p$. Recall that  $Y(p)(\bar k)$ classifies the $\bar k$-isomorphism classes of pairs $\bigl(E,(P,Q)\bigr)$  where $E$ is an elliptic curve over $\bar k$ and $(P,Q)$ is an $\Fp$-basis of $E[p]$. The group $\GL_2(\Fp)$ acts on a pair on the right as usual $\bigl(E,(P,Q)\bigr)\cdot \left(\ma{a}{b}{c}{d}\right)  = \bigl(E, (aP + cQ, bP+dQ)\bigr)$. A point in $Y_{\mathcal H}(\bar k)$ is an orbit under this action by the non-split Cartan subgroup $\mathcal H$. For a given elliptic curve $E/\bar k$, the $\mathcal H$-orbits of triples is a $\mathcal G$-set isomorphic to $\mathcal G/\mathcal H$. Corollary~\ref{gset_cor} has shown us that the set of oriented necklaces on $E$ is also isomorphic to $\mathcal G/\mathcal H$. Hence we have:

 \begin{prop}
   Let $\mathcal H$ be a non-split Cartan subgroup in $\GL_2(\Fp)$. There is a bijection between the points in $Y_{\mathcal H}(\bar k)$ and the set of $\bar k$-isomorphism classes of pairs $(E,\mathfrak v)$ composed of an elliptic curve $E/\bar k$ together with an oriented necklace $\mathfrak v$ in $E$. Similarly $Y_{\mathcal N}(\bar k)$ consists of pairs $(E,\mathfrak v)$ where $\mathfrak v$ is a necklace in $E$.
 \end{prop}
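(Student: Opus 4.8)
The plan is to build one explicit natural map from bases to oriented necklaces out of the fixed data $(i_\alpha,\gamma)$, and then to check that it descends to $Y_{\mathcal H}$ and is a bijection. Since every construction in sight is natural in $E$, I would first reduce to a single elliptic curve $E/\bar k$: a point of $Y_{\mathcal H}(\bar k)$ lying over the isomorphism class of $E$ is an $\mathcal H$-orbit of bases of $E[p]$, while a point of the target lying over $E$ is an oriented necklace on $E$. It therefore suffices to produce, for each $E$, a bijection between these two finite sets which is equivariant for isomorphisms of $E$; gluing over all isomorphism classes then yields the statement.

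For the fibrewise map, fix a basis $(P,Q)$ of $E[p]$ and let $[P,Q]\colon \FF_p^2\to E[p]$ be the induced isomorphism. Set
\[
  h_{(P,Q)} \;=\; [P,Q]\circ i_\alpha(\gamma)\circ [P,Q]^{-1}\ \in\ \GL(E[p]),
\]
whose characteristic polynomial is the minimal polynomial of $\gamma$; hence its class in $\PGL(E[p])$ lies in $\mathcal C_\gamma$. Being a non-split Cartan element of order $p+1$, it has no fixed point on $\PP(E[p])$ and so permutes the $p+1$ cyclic subgroups of order $p$ in a single $(p+1)$-cycle. I define $\Phi(P,Q)$ to be the resulting oriented necklace $\bigl(C_0, h_{(P,Q)}(C_0), h_{(P,Q)}^2(C_0),\dots\bigr)$, which is independent of the starting pearl $C_0$ by the cyclic-permutation equivalence.

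Two verifications make $\Phi$ usable. First, replacing $(P,Q)$ by a right translate $(P,Q)\cdot i_\alpha(\beta)$ with $i_\alpha(\beta)\in\mathcal H$ conjugates $i_\alpha(\gamma)$ by $i_\alpha(\beta)$; but these two matrices commute, since the image of $i_\alpha$ is the abelian group $\Fps^\times$, so $h_{(P,Q)}$ and thus the necklace are unchanged, and $\Phi$ factors through $\mathcal H$-orbits. Second, a geometric automorphism $g\in\GL(E[p])$ satisfies $[gP,gQ]=g\circ[P,Q]$, whence $h_{(gP,gQ)}=g\,h_{(P,Q)}\,g^{-1}$ and $\Phi(gP,gQ)=g\cdot\Phi(P,Q)$; the same computation applied to an isomorphism $\phi\colon E\to E'$ gives naturality in $E$. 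Since the centre lies in $\mathcal H$ and acts trivially, $\Phi$ descends to a $\PGL(E[p])$-equivariant map on $\mathcal H$-orbits.

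It remains to see that this induced map is a bijection and to treat $\mathcal N$. By Corollary~\ref{gset_cor} the oriented necklaces form a transitive $\PGL(E[p])$-set of size $p(p-1)$, and the $\mathcal H$-orbits of bases form a transitive $\PGL(E[p])$-set of the same size (as $\GL(E[p])$ acts transitively on bases); an equivariant map between transitive $G$-sets is automatically surjective, hence a bijection between sets of equal finite cardinality, so $\Phi$ furnishes the desired fibrewise bijection, and naturality glues these into the bijection for $\Xn$. For the normaliser, the nontrivial coset of $\mathcal N/\mathcal H$ is represented by the conjugation $c$, and $(P,Q)\mapsto(P,Q)\cdot c$ replaces $i_\alpha(\gamma)$ by $i_\alpha(\bar\gamma)$, i.e.\ $h_{(P,Q)}$ by its inverse in $\PGL(E[p])$; by the remark following the definition of an oriented necklace this is exactly the orientation reversal $w$. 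Hence $\mathcal N$-orbits correspond to $w$-orbits, that is to necklaces, of which there are $p(p-1)/2$. The main point requiring care is the first verification together with naturality in $E$: everything hinges on the commutativity of the image of $i_\alpha$ and on keeping the coefficient-change action (on the right) cleanly separate from the geometric action (on the left); once these are pinned down, bijectivity is a transitivity-and-counting formality.
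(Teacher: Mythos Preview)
Your proof is correct and follows essentially the same route as the paper: both sides are transitive $\PGL(E[p])$-sets with stabiliser a non-split Cartan, hence isomorphic as $\mathcal G$-sets to $\mathcal G/\mathcal H$ by Corollary~\ref{gset_cor}, and an equivariant map between them is automatically a bijection. The paper's proof of this proposition is purely abstract (it simply invokes that both sides are $\mathcal G$-isomorphic to $\mathcal G/\mathcal H$), whereas you build the explicit equivariant map $(P,Q)\mapsto h_{(P,Q)}$ and verify naturality in $E$; this extra explicitness is precisely what the paper postpones to Section~\ref{degeneracy_sec}, where the same map appears as $\pin\circ\pi_{\mathcal A}$ with $h_0=i_\gamma(\gamma)$ in place of your $i_\alpha(\gamma)$.
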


In Section~\ref{galois_subsec}, we will give the description of $k$-rational points for fields $k$ which are not algebraically closed. We will from now on informally say that $\Yn^{+}$ and $\Yn$ are coarse moduli spaces for the moduli problem of elliptic curves endowed with a necklace and an oriented necklace respectively.

In order to make this precise, we would have to extend the definition of necklaces to elliptic curves over arbitrary schemes.
But, as we will now explain briefly, the most natural definition gives a functor of moduli problem that fails to capture the correct Galois action, making the situation  not entirely satisfactory in this case. Note that one encounters the same problem for the case of split Cartan subgroup, while the analogous situation for $Y_0$ and $Y_1$ works nicely.

For any $\ZZ\bigl[\tfrac{1}{p}\bigr]$-scheme $S$ and elliptic curve $E/S$, we could define a $\gamma$-necklace on $E/S$ to be a complete list of all $p+1$ cyclic $S$-subgroup schemes $(C_0, \dots, C_p)$ of order $p$ in $E[p]$ up to cyclic permutation verifying the condition with respect to $\gamma$ as in Section~\ref{subsec:necklaces}. Consider the moduli problem given by the functor $\mathcal{F}$ associating to a $\ZZ\bigl[\tfrac{1}{p}\bigr]$-scheme $S$, the set of $S$-isomorphism classes of $(E/S,\mathfrak{v})$ where $E/S$ is an elliptic curve and $\mathfrak{v}$ such a $\gamma$-necklace on $E/S$. Then, essentially by the above Proposition, the scheme $\Yn^+$ over $\ZZ\bigl[\tfrac{1}{p}\bigr]$ is a coarse moduli scheme for this functor~$\mathcal{F}$.

However, contrarily to  what happens for the coarse moduli scheme $Y_0$ and a fortiori for the fine moduli scheme $Y_1$, the morphism of functors $\mathcal{F} \to \Yn^+$ does not necessarily give a surjection $\mathcal{F}(k) \to \Yn^+(k)$ when $k$ is a field that is not algebraically closed: For instance $\mathcal{F}(\mathbb{Q})$ is empty for all odd primes $p$, yet $\Yn^{+}(\mathbb{Q})$ will contain points for many primes $p$. Since we wish to describe all points of $\Yn^{+}$, this functor~$\mathcal{F}$ is not the best choice. An option would be to write down a different functor, but that turns out to be cumbersome. In this article, we prefer to view points in $\Yn^{+}(k)$ for a field $k$ as points in $\Yn^{+}(\bar k)$ which are fixed by the absolute Galois group of $k$. They will be described in Section~\ref{galois_subsec} in term of necklaces.

The above problem is also present for the split Cartan subgroup. The description of the $\bar k$-rational points using non-ordered pairs of cyclic subgroups together with the Galois action has nevertheless been used extensively (see for instance~\cite{momose}). The aim of this article is, in a similar spirit, to obtain basic properties of $\Xn$ and $\Xn^{+}$ via our description.

Now, it is very natural to ask about the fibre at $p$ of a good model of $\Xn$ and $\Xn^+$ over $\ZZ$. The naive extension of the above definition of necklace for elliptic curves over $\ZZ$-schemes cannot work as elliptic curves in characteristic $p$ have either one or two distinct cyclic (in the sense of~\cite{katz_mazur}) subgroup schemes of order $p$. To find an appropriate definition of necklaces that would also work for characteristic $p$ should be the topic of a future investigation.

\subsection{The cross-ratio}
As before, the action by homography of $\PGL\bigl(E[p]\bigr)$ on the projective space $\PP\bigl(E[p]\bigr)$ is denoted on the left. We denote the elements of  $\PP^1(\Fp)$ by $\infty=(1:0)$ and $a=(a:1)$ for $a\in\Fp$.

 Let $A,B,C$ be three distinct points in $\PP\bigl(E[p]\bigr)$ and $D\in \PP\bigl(E[p]\bigr)$. Recall that the cross-ratio of $A,B,C,D$ is defined by $[A,B;C,D]=f(D)$ where $f:\PP\bigl(E[p]\bigr)\longrightarrow \PP^1(\Fp)$ is the unique isomorphism such that $f(A)=\infty,$ $f(B)=0$ and $ f(C)=1.$  After a choice of basis of $E[p]$ identifying $\PP\bigl(E[p]\bigr)$ with~$\PP^1(\Fp)$, we get
 \begin{equation*}
  [A,B;C,D] = \frac{A-C}{B-C}\cdot \frac{B-D}{A-D}.
 \end{equation*}
 The last formula is independent of the choice of basis since  the cross-ratio is $\PGL\bigl(E[p]\bigr)$-invariant. If $\mathfrak v=(C_0,C_1,\dots, C_p)$ is an oriented necklace, then $[C_0,C_1;C_2,C_3] = [C_i,C_{i+1}; C_{i+2},C_{i+3}]$ for all $0\leq i\leq p$ with the index taken modulo $p+1$. Hence we can attach a cross-ratio to each necklace. As described above, the action of $\PGL\bigl(E[p]\bigr)$ on oriented necklaces is transitive and hence this cross-ratio $[C_0,C_1;C_2,C_3]$ is the same for all oriented $\gamma$-necklaces.

 \begin{prop}
   Let $\gamma$ be a generator of $\Fps^{\times}$ of trace $t$ and norm $n$. Set $\xi_{\gamma} = t^2/(t^2-n)$. Then a list $(C_0,C_1,\dots, C_p)$ of all distinct cyclic subgroups of order $p$ in $E$ represents a $\gamma$-necklace if and only if $[C_i,C_{i+1};C_{i+2},C_{i+3}]=\xi_{\gamma}$ for all $0\leq i \leq p$ with the index taken modulo $p+1$.
 \end{prop}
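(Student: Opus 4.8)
The plan is to prove the two implications separately, and to reduce the forward direction (that a $\gamma$-necklace has all of its cross-ratios equal to $\xi_\gamma$) to a single explicit computation. Let $\mathfrak v=(C_0,C_1,\dots,C_p)$ be an oriented $\gamma$-necklace, so there is $h\in\mathcal{C}_\gamma$ with $h(C_i)=C_{i+1}$. Since $h$ sends the quadruple $(C_i,C_{i+1},C_{i+2},C_{i+3})$ to $(C_{i+1},C_{i+2},C_{i+3},C_{i+4})$ and the cross-ratio is $\PGL(E[p])$-invariant, all the cross-ratios $[C_i,C_{i+1};C_{i+2},C_{i+3}]$ coincide, as already noted before the statement. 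Moreover, by the transitivity of $\PGL(E[p])$ on oriented $\gamma$-necklaces (Corollary~\ref{gset_cor}), this common value is the same for every $\gamma$-necklace on every $E$. Hence it suffices to compute $[C_0,C_1;C_2,C_3]$ for one convenient necklace.

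To carry out that computation I would use the normal form of Lemma~\ref{g_lem}: choosing generators $P_0\in C_0$ and $P_1\in C_1$ as a basis of $E[p]$, the element $h$ is represented by a matrix $\bigl(\ma{0}{y}{x}{t}\bigr)$ with $\tr=t$ and $\det=n$, and $C_2=\langle yP_0+tP_1\rangle$. Applying this matrix once more gives the coordinates of $C_3=h(C_2)$ in the basis $(P_0,P_1)$, so that $C_0,C_1,C_2,C_3$ are all expressed as explicit points of $\PP(E[p])\cong\PP^1(\Fp)$. Substituting these four points into the cross-ratio formula recorded above is then a direct calculation returning the value $\xi_\gamma$; one should also check en route that $\xi_\gamma\notin\{0,1,\infty\}$, which is forced because $h$ acts as a single $(p+1)$-cycle (so $C_0,C_1,C_2,C_3$ are pairwise distinct) and because $t\neq 0$ and $n\neq 0$ for a generator $\gamma$.

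For the converse, suppose $(C_0,\dots,C_p)$ is the list of all $p+1$ subgroups and that every window satisfies $[C_i,C_{i+1};C_{i+2},C_{i+3}]=\xi_\gamma$. Apply Lemma~\ref{g_lem} to the distinct triple $(C_0,C_1,C_2)$ to obtain the unique $h\in\mathcal{C}_\gamma$ with $h(C_0)=C_1$ and $h(C_1)=C_2$; its orbit $(C_0',C_1',\dots)$ is an oriented $\gamma$-necklace with $C_i'=C_i$ for $i=0,1,2$, and by the forward direction it too satisfies $[C_i',C_{i+1}';C_{i+2}',C_{i+3}']=\xi_\gamma$. Now the key point is that for fixed distinct $C_i,C_{i+1},C_{i+2}$ the map $D\mapsto[C_i,C_{i+1};C_{i+2},D]$ is an isomorphism $\PP(E[p])\to\PP^1(\Fp)$, so $[C_i,C_{i+1};C_{i+2},D]=\xi_\gamma$ has a unique solution $D$. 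Since both $C_{i+3}$ and $C_{i+3}'$ are solutions once $C_j=C_j'$ for $j\leq i+2$, an induction on $i$ starting from $C_0=C_0',C_1=C_1',C_2=C_2'$ gives $C_i=C_i'$ for all $i$, and the wrap-around windows are then automatically satisfied. Thus the given list is the $h$-orbit, i.e.\ a $\gamma$-necklace.

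The main obstacle is the forward computation: it is only the bookkeeping of the entries $x,y$ and of $t,n$ in the matrix of Lemma~\ref{g_lem}, together with the conventions fixing the cross-ratio formula, that produces the precise value $\xi_\gamma$, and it is here that the signs must be tracked with care. The converse is then essentially formal, once one isolates the uniqueness of the fourth point determined by three distinct points and a prescribed cross-ratio.
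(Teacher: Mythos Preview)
Your approach is correct and matches the paper's: reduce the forward direction to a single explicit cross-ratio computation for one convenient necklace (the paper simply takes the specific representative $h=\bigl[\ma{0}{-n}{1}{t}\bigr]$ rather than the general $\bigl(\ma{0}{y}{x}{t}\bigr)$ from Lemma~\ref{g_lem}, which spares you tracking $x,y$ since $xy=-n$ anyway), and then read off $\xi_\gamma$ from the four consecutive pearls. The paper's written proof actually stops there and leaves the converse implicit; your induction via the uniqueness of the fourth point with prescribed cross-ratio is exactly the argument the paper alludes to in the sentence immediately following the proof, so you have in fact spelled out what the paper only gestures at.
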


 This provides a new possibility of defining necklaces by-passing completely the use of the automorphism group of $E[p]$, but only relying on the projective geometry of $\PP(E[p])$.

 \begin{proof}
   We only need to compute the cross-ratio for one necklace. We take the basis such that $h=[\ma{0}{-n}{1}{t}]$ is in $\mathcal{C}_\gamma$. The necklace now contains the consecutive pearls $\infty$, $0$, $-n/t$ and $-nt/(-n+t^2)$ from which we obtain the above cross-ratio $\xi_{\gamma}$.
 \end{proof}

 Since to each triple $(C_0,C_1,C_2)$ there is a unique $C_3$ such that $[C_0,C_1;C_2,C_3]=\xi_\gamma$, we have a second proof of Lemma~\ref{g_lem}.

\subsection{Relation to other descriptions}\label{merel_subsec}

 We recall a different description of the $\mathcal H_\alpha$-orbits of points in $X(p)$ where $\alpha$ is a choice in $\Fps\setminus\Fp$. See~\cite{merel}. Let $E/\bar k$ be an elliptic curve.  Choose a basis $P_0$, $P_1$ of $E[p]$ and identify $E[p]$ with $\Fps$ via $P_0 \mapsto 1$ and $P_1\mapsto \alpha$.  Any basis $(P,Q)$ of $E[p]$ is equal to $(P_0,P_1)g$ for some $g\in\GL_2(\Fp)$. Consider the $\GL_2(\Fp)$-equivariant map which sends $(P_0,P_1)$ to $(1:\alpha)\in\PP^1(\Fps)\setminus\PP^1(\Fp)$. Since the action of $\mathcal H_\alpha$ is now just the multiplication on $\Fps$, it induces a well defined $\GL_2(\Fp)$-equivariant  map from the set of $\mathcal H_\alpha$-orbits of basis $(P,Q)$ to $\PP^1(\Fps)\setminus\PP^1(\Fp)$. This is a $\GL_2(\Fp)$-equivariant bijection.

 This leads now to a moduli problem description of $\Xn$. Each point in $\Yn(\bar k)$ with $\bar k $ an algebraically closed field of characteristic different from $p$ is a $\bar k$-isomorphism class of $(E,\mathfrak C)$ where $E/\bar k$ is an elliptic curve and $\mathfrak C$ is an element in $\PP\bigl( E[p]\otimes \Fps \bigr) \setminus \PP\bigl(E[p]\bigr)$. The group $\PGL\bigl(E[p]\bigr)$ acts on the left on $\PP\bigl(E[p]\otimes \Fps\bigr)$ by its action on $E[p]$.

We will now give an explicit $\PGL\bigl(E[p]\bigr)$-equivariant bijection between the set of oriented $\gamma$-necklaces of $E$ and  $\PP\bigl( E[p]\otimes \Fps \bigr) \setminus \PP\bigl(E[p]\bigr)$.  Write $n$ and $t$ for the norm and trace of the fixed element $\gamma$ in $\Fps$. Consider the map
\begin{equation}\label{merelade}
\begin{array}{rcl}
\{\text{$\gamma$-necklaces}\} & \longrightarrow &\PP\bigl( E[p] \otimes \Fps \bigr) \\
(C_0, C_1, C_2, \dots) & \mapsto& \langle P\otimes (-\gamma) + Q\otimes 1\rangle
\end{array}
\end{equation}
 where $(P,Q)$ is a basis of $E[p]$ such that $C_0= \langle P \rangle$, $C_1=\langle Q\rangle$ and $C_2=\langle -n P + t Q\rangle $.   Note that such a basis exists because neither $n$ nor $t$ could be zero when $\gamma$ is a multiplicative generator of $\Fps^\times$. We have to show that this map is well-defined. Let $h$ be a generator in the stabiliser of $\mathfrak v$ which belongs to $\mathcal{C}_{\gamma}$. In the basis $(P,Q)$ this element $h$ is represented by the matrix $\bigl[\ma{0}{-n}{1}{t}\bigr]$. Now
 \begin{equation*}\label{compute}
   h\Bigl( P\otimes (-\gamma) + Q\otimes 1 \Bigr) = Q\otimes (-\gamma) + (-nP+tQ)\otimes 1 = (t-\gamma) \cdot \Bigl( P \otimes (-\gamma) + Q\otimes 1 \Bigr)
 \end{equation*}
 as $(t-\gamma )(-\gamma) = \gamma^2 -t \gamma = -n$. This shows that the line in $\PP\bigl( E[p] \otimes \Fps \bigr) $ does not depend on the choices made in the construction. It also is evident from this that the stabiliser of $\mathfrak v$ is equal to the stabiliser of the image. From the construction we see that the map is $\PGL\bigl(E[p]\bigr)$-equivariant. Since the actions are transitive, it  follows that it is surjective and hence bijective.

 Since we have no geometric object linked to $E$ which can be thought of directly as an element in $E[p]\otimes \Fps$, we believe that the moduli problem of necklaces has its advantages.

\medskip
 While finalising this article, we learnt of yet another moduli interpretation given by Kohen and Pacetti in~\cite{kohen_pacetti}: Fix a choice of a quadratic non-residue $\varepsilon$ modulo $p$. They represent each point in $\Yn^+(\bar k )$ by a $\bar k$-isomorphism class of $(E,\phi)$ where $E/\bar k$ is an elliptic curve and $\phi\in\GL\bigl(E[p]\bigr)$ is an element such that $\phi^2$ is the multiplication by $\varepsilon$. See Proposition~1.1 and Remark~1.3 in~\cite{kohen_pacetti}. The following defines a $\PGL\bigl(E[p]\bigr)$-equivariant bijection between the set of such endomorphisms $\phi$ and the set of necklaces on $E$. The endomorphism $\phi$ defines an element of order two in $\PGL\bigl(E[p]\bigr)$ without fixed point; so it belongs to a unique non-split Cartan subgroup $\mathcal H$ whose normaliser is the stabiliser of a necklace $\mathfrak v$. Conversely, every stabiliser of a necklace contains a unique element in $\PGL\bigl(E[p]\bigr)$ that lifts to an element $\phi \in \GL\bigl(E[p]\bigr)$ with $\phi^2 = \varepsilon$.


\section{Describing the geometry and arithmetic with necklaces}\label{geometry_sec}

\subsection{Degeneracy maps}\label{degeneracy_sec}

 Let $\mathcal A$ be the group of scalars  in $\GL_2(\Fp)$ and consider the associated modular curve $X_{\mathcal A}$. Because the group $\PGL_2(\Fp)$ acts sharply $3$-transitive on $\PP^1(\Fp)$, the curve $X_{\mathcal A}$ represents the moduli problem associating to each elliptic curve $E$ a triple of distinct cyclic subgroups $(C_0,C_1,C_2)$ of order $p$ in $E$, which is also called a projective frame in $\PP(E[p])$.

 The map $\pi_{\mathcal A}\colon X(p)\to X_{\mathcal A}$ can be chosen to be the following. Let $n$ and $t$ be the norm and trace of our fixed generator $\gamma$ in $\FF_{p^2}$. To each basis $(P,Q)$ of the $p$-torsion of an elliptic curve $E$, we associate the triple $\bigl( \langle P \rangle, \langle Q\rangle, \langle -n\,P+t\,Q\rangle \bigr)$. From the fact that $t\neq 0$, it is clear that this gives a map $X(p)\to X_{\mathcal A}$. Next we describe the map $\pin\colon X_{\mathcal A} \to \Xn$. We have a natural choice to send the triple $(C_0,C_1,C_2)$ to the unique oriented necklace $C_0\to C_1\to C_2$ given by Lemma~\ref{g_lem}. Similarly, we will send it to the necklace $C_0 - C_1 - C_2$ to define the map $\pin^+ \colon X_{\mathcal A} \to \Xn^+$.

 The advantage of our choices is that $\pin\circ \pi_{\mathcal A}$ provides  an explicit bijection between the set of orbits of isomorphism classes $(E,(P,Q))$ under a particular non-split Cartan subgroup $\mathcal H_0$ and the set of isomorphism classes $(E,\mathfrak v)$ of elliptic curves endowed with an oriented necklace. Let $\mathcal H_0$ be the non-split Cartan subgroup in $\GL_2(\Fp)$ generated by the matrix $h_0 = \bigl(\ma{0}{-n}{1}{t}\bigr)=i_\gamma(\gamma)$, which is an element in our chosen class $\mathcal{C}_{\gamma}$ for $V=\Fp^2$.
 Let $E$ be an elliptic curve and $(P,Q)$ a basis of $E[p]$. Denote by  $h\in\PGL(E[p])$ the element of order $p+1$ defining the necklace $\mathfrak v= \pin\circ \pi_{\mathcal A}(P,Q)$.
 Then, by construction of $\pi_{\mathcal A}$,
 \begin{equation*}
   \pi_{\mathcal A}\bigl( (P,Q) \cdot h_0 \bigr ) = h \cdot \pi_{\mathcal A}(P,Q).
 \end{equation*}
 This insures that the map which sends an orbit  $(E,(P,Q))\,\mathcal H_0$ to $(E,\mathfrak v)$ where $\mathfrak{v}= \pin\circ \pi_{\mathcal A}(P,Q) $ is well defined and it gives the expected bijection.

 Under the map~\eqref{merelade} in Section~\ref{merel_subsec}, identifying necklaces with elements in $\PP\bigl( E[p]\otimes \Fps \bigr) \setminus \PP\bigl(E[p]\bigr)$, the degeneracy map above can also be described as sending the basis $(P,Q)$ of $E[p]$ to the projective line $\langle P\otimes (-\gamma) + Q\otimes 1\rangle$ in $E[p]\otimes \Fps$. This provides an explicit bijection between the set of orbits of isomorphism classes $(E,(P,Q))$ under $\mathcal H_0$ and the set of isomorphism classes $(E,\mathfrak C) $ of  elliptic curves endowed with an element  $\mathfrak C$  in $\PP\bigl( E[p]\otimes \Fps \bigr) \setminus \PP\bigl(E[p]\bigr)$.

 We will see later in Section~\ref{chen_grth_subsec} another naturally defined degeneracy map $\piin\colon X_{\mathcal A}\to \Xn^{+}$.

\subsection{Cusps}\label{cusps_subsec}

The following proposition, quoted (but not proved) in~\cite{serre} Appendix~A.5, can be proved using necklaces:
 \begin{prop}
  The modular curve  $\Xn$ has $p-1$ cusps, each ramified of degree $p$ over the cusp $\infty$ in $X(1)$.
 \end{prop}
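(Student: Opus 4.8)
The plan is to count cusps of $\Xn$ by analysing the action of a non-split Cartan subgroup $\mathcal H$ on the cusps of $X(p)$, since $\Xn = X(p)/\mathcal H$ and the ramification over the single cusp $\infty$ of $X(1) = X(p)/\GL_2(\Fp)$ is governed by the orbit structure. First I would recall that the cusps of $X(p)$ are indexed by the $\GL_2(\Fp)$-set of primitive vectors $(P,Q)$ up to the relevant identification, or equivalently by $\PP^1(\Fp)$ together with a choice refining the level structure; concretely the cusps of $X(p)$ form a single $\GL_2(\Fp)$-orbit, and the stabiliser of a cusp is a Borel-type subgroup (the subgroup fixing the line spanned by the first basis vector, up to unipotent data). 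The number of cusps of $\Xn$ is then the number of $\mathcal H$-orbits on the set of cusps of $X(p)$, and the ramification degree of each $\Xn$-cusp over $\infty$ is the size of the corresponding $\mathcal H$-orbit.

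The key representation-theoretic input is the orbit structure of the non-split Cartan $\mathcal H$, of order $p^2-1$, acting on $\PP^1(\Fp)$. Using the identification $\mathcal H = i_\alpha(\Fps^\times)$ from Section~\ref{cartan_subsec}, the action on $\PP^1(\Fp)$ is the action of $\Fps^\times$ by multiplication on the projectivisation of $\Fps$ viewed over $\Fp$, but $\Fps^\times$ acts on $\PP^1(\Fp) = \PP(\Fps)$ \emph{freely}: no nonzero element of $\Fps$ lies in a proper $\Fp$-line stabilised by a nontrivial scalar from $\Fps^\times$, because an element fixing a line in $\PP(\Fps)$ would have an eigenvalue in $\Fp$, contradicting that $\mathcal H$ is non-split. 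Hence $\mathcal H$ acts on $\PP^1(\Fp)$ with stabilisers equal to $\mathcal A = \Fp^\times$ (the scalars), giving orbits of size $(p^2-1)/(p-1) = p+1$. Since $|\PP^1(\Fp)| = p+1$, the action is transitive on $\PP^1(\Fp)$.

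Next I would lift this to the cusps of $X(p)$. The set of cusps maps onto $\PP^1(\Fp)$ (recording the line $\langle P\rangle$), and the fibre over each line carries an additional $\ZZ/p$-worth of data corresponding to the unipotent part of the stabiliser; the full cusp set has size $p(p-1) \cdot \tfrac{?}{}$ in the standard normalisation, but the relevant point is that $\mathcal H$ acts freely on this $\ZZ/p$-coordinate because $\mathcal H \cap \mathcal B$ for the Borel $\mathcal B$ fixing a line is exactly the scalar subgroup $\mathcal A$ (again by non-splitness, $\mathcal H$ contains no nontrivial unipotent or Borel element). Combining, each $\mathcal H$-orbit on cusps has size $p$: the stabiliser in $\mathcal H$ of a cusp is trivial modulo scalars, and the scalars act trivially on cusps, so the orbit size is $|\mathcal H|/|\mathcal A \cdot (\text{unipotent stabiliser})|$. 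Counting $|X(p)\text{-cusps}|/p$ then yields $p-1$ cusps on $\Xn$, each with ramification degree $p$ over $\infty$.

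I expect the main obstacle to be pinning down the precise $\GL_2(\Fp)$-set structure of the cusps of $X(p)$ and the stabiliser of a single cusp, so that the two separate freeness statements (free on $\PP^1(\Fp)$ modulo scalars, and free on the unipotent coordinate) assemble correctly into orbit size exactly $p$. The cleanest route is probably to identify the cusps of $X(p)$ with the $\GL_2(\Fp)$-orbit $\GL_2(\Fp)/U'$ where $U'$ is generated by the transvections fixing a fixed cusp, compute $\mathcal H \cap U'$ and its conjugates, and verify that $\mathcal H$ meets the cusp-stabiliser in exactly the scalars; then the ramification degree over $\infty$ follows from the general fact that, for $X(p) \to \Xn \to X(1)$, the ramification of a cusp of $\Xn$ over $\infty$ equals its $\mathcal H$-orbit size divided by the common ramification of $X(p)/X(1)$ at cusps, and I would reconcile the normalisations so that this comes out to $p$.
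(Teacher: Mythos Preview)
Your strategy is essentially the paper's first argument: the two group-theoretic facts you isolate---that a non-split Cartan acts transitively on $\PP^1(\Fp)$ and that it meets every Borel subgroup only in the scalars (equivalently, contains no nontrivial unipotent)---are exactly what the paper invokes via Theorem~10.9.1 of Katz--Mazur. The paper then supplements this with a second, hands-on argument by looking at necklaces on the Tate curve $E_q$ and tracking how the inertia of $\QQ(\!(q)\!)[\alpha,\zeta]/\QQ(\!(q)\!)$ permutes the pearls adjacent to $C_0=\mu_p$; you do not attempt that route.

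Your bookkeeping, however, contains real errors. It is \emph{not} true that all scalars act trivially on the cusps of $X(p)$: only $\{\pm 1\}$ does, since the only automorphisms of the Tate curve are $\pm 1$, and the scalar $a$ moves the cusp attached to $(\zeta,\alpha)$ to the one attached to $(\zeta^a,\alpha^a)$, which lies on a different Weil-pairing component whenever $a^2\neq 1$. Consequently the stabiliser in $\GL_2(\Fp)$ of a cusp is $\{\pm 1\}\cdot U$ of order $2p$, and the $\mathcal H$-orbits on cusps have size $(p^2-1)/2$, not $p$; indeed an $\mathcal H$-orbit of size $p$ is impossible since $p\nmid\lvert\mathcal H\rvert=p^2-1$. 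Your closing ramification formula is also not right: what you need is simply $e_{\bar c/\infty}=e_{c/\infty}/e_{c/\bar c}$ for any cusp $c$ of $X(p)$ over $\bar c\in\Xn$, and $e_{c/\bar c}=1$ precisely because the image of $\mathcal H$ in $\PGL_2(\Fp)$ meets the unipotent inertia trivially. The quickest clean version of your plan is to pass to $\PGL_2(\Fp)$ from the outset, replacing $X(p)$ by $X_{\mathcal A}$: there the cusp stabiliser is a unipotent $\bar U$ of order $p$, the non-split Cartan $\bar{\mathcal H}$ has order $p+1$ and meets every conjugate of $\bar U$ trivially, so the cusps of $\Xn$ are the $p-1$ free $\bar{\mathcal H}$-orbits on the $p^2-1$ cusps of $X_{\mathcal A}$, each unramified under $X_{\mathcal A}\to\Xn$ and hence of ramification index $p$ over~$\infty$.
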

 \begin{proof}
   In  order to determine the structure of the cusps, we use the Tate curve $E_q$ over $\QQ(\!(q)\!)$. Formally, one can deduce the proposition using Theorem~10.9.1 in~\cite{katz_mazur} from the fact that a non-split Cartan subgroup of $\PGL_2(\Fp)$ acts transitively on $\PP^1(\Fp)$ and that it contains no non-trivial element from any Borel subgroup. In particular, the formal completion of $\Xn$ along the cusps is the formal spectrum of $\QQ(\zeta)[\![\alpha]\!]$, where $\alpha^p = q$ and $\zeta$ is a $p$-th root of unity.

   However, we can also view it on the necklaces of $E_q$. The Tate curve has a distinguished cyclic subgroup $\mu_p$ of order $p$. Any oriented necklace $\mathfrak v$ can be turned in such a way that $C_0=\mu_p$. The two following pearls $C_1$ and $C_2$ have each a generator which is a $p$-th root of $q$, say $\alpha \zeta^i$ and $\alpha \zeta^j$, respectively, where $0\leq i \neq j < p$. From the action of the inertia group of the extension $\QQ(\!(q)\!)[\alpha,\zeta]$ over $\QQ(\!(q)\!)$, we see that all the $p$ necklaces with a given $i-j\in \Fp^{\times}$ meet at the same cusp in the special fibre at $(q)$.
 \end{proof}

 The cusps are not defined over $\QQ$ but over the cyclotomic field $\QQ(\mu_p)$ only, forming one orbit under the action of the Galois group, despite the fact that $\Xn$ is defined over $\QQ$. See Appendix~A.5 in~\cite{serre}. As a consequence there are $p-1$ choices of embeddings $\Xn \hookrightarrow \Jac(\Xn)$, all defined over $\QQ(\mu_p)$ only and none of them is a canonical choice.

 With the same proof we show that $\Xn^{+}$ has $(p-1)/2$ cusps defined over the maximal real subfield of $\QQ(\mu_p)$.


\subsection{Galois action}\label{galois_subsec}

 Let $k$ be a field of characteristic different from $p$ and write $G_k$ for its absolute Galois group. For any $\sigma$ in $G_k$ and point $x \in \Yn(\bar k)$, represented by the pair $(E,\mathfrak v)$, we define $\sigma(x)$ in the obvious way as the $\bar k$-isomorphism class of the pair $\bigl(E^{\sigma}, \sigma(\mathfrak v)\bigr)$.  Here $\sigma\bigl((C_0,C_1,\dots)\bigr)$ is the necklace $\bigl(\sigma(C_0), \sigma(C_1), \dots \bigr)$. Write $\Yn(k)$ for the elements in $\Yn(\bar k)$ fixed by $G_k$.

 \begin{prop}
   Let $x \in \Yn(k)$. Then there exists a pair $(E,\mathfrak v)$ representing $x$ such that $E$ is defined over $k$. If $j(E) \not\in \{0,1728\}$ then the oriented necklace $\mathfrak v$ is also defined over $k$, in the sense that $\sigma(\mathfrak v) = \mathfrak v$ for all $\sigma\in G_k$. In particular, the image of the residual Galois representation $\bar\rho_p (E)\colon G_k \to \GL\bigl(E[p]\bigr)$ has its image in a non-split Cartan subgroup.
 \end{prop}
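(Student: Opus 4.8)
The plan is to establish the three assertions in turn. To begin, I would descend $E$ to $k$. The forgetful morphism $\Yn \to X(1)$, $(E,\mathfrak v)\mapsto j(E)$, is $G_k$-equivariant, so $j(E)=j(x)\in k$. Since every element of $k$ occurs as the $j$-invariant of some elliptic curve defined over $k$, I can choose $E'/k$ with $j(E')=j(E)$ together with a $\bar k$-isomorphism $\phi\colon E\xrightarrow{\sim}E'$. Transporting $\mathfrak v$ along $\phi$ yields a pair $\bigl(E',\phi(\mathfrak v)\bigr)$ that is $\bar k$-isomorphic to $(E,\mathfrak v)$ and hence still represents $x$. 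Replacing $(E,\mathfrak v)$ by this pair, I may assume from now on that $E$ is defined over $k$.

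Next I would descend the necklace. With $E/k$ one has $E^\sigma=E$, so the hypothesis $\sigma(x)=x$ says that the pairs $\bigl(E,\sigma(\mathfrak v)\bigr)$ and $(E,\mathfrak v)$ are $\bar k$-isomorphic for every $\sigma\in G_k$; equivalently, there is $\psi_\sigma\in\Aut(E)$ with $\psi_\sigma\bigl(\sigma(\mathfrak v)\bigr)=\mathfrak v$. This is the single place where the hypothesis $j(E)\notin\{0,1728\}$ enters: it forces $\Aut(E)=\{\pm1\}$. Because a necklace is built purely from cyclic subgroups of order $p$, that is from points of $\PP(E[p])$, and $-1$ acts as $-\mathrm{Id}$ on $E[p]$ and hence trivially on $\PP(E[p])$, the automorphism $\psi_\sigma$ fixes every necklace. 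Thus $\mathfrak v=\psi_\sigma\bigl(\sigma(\mathfrak v)\bigr)=\sigma(\mathfrak v)$, so $\mathfrak v$ is $G_k$-stable.

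For the Galois image, fix $\sigma\in G_k$. Since $\sigma$ fixes $\mathfrak v$, the image of $\sigma$ under $\bar\rho_p$, viewed in $\PGL(E[p])$, lies in the stabiliser of $\mathfrak v$, which by Corollary~\ref{gset_cor} is a non-split Cartan subgroup $\mathcal H$ of $\PGL(E[p])$. The preimage of $\mathcal H$ under the projection $\GL(E[p])\to\PGL(E[p])$, whose kernel is the scalars $\Fp^{\times}$, is precisely a non-split Cartan subgroup of $\GL(E[p])$: such a Cartan has order $p^2-1=(p-1)(p+1)$ and maps onto the order-$(p+1)$ group $\mathcal H$ with kernel exactly the $p-1$ scalars, so the counts match. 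As this holds for all $\sigma$, the image of $\bar\rho_p$ lies in a non-split Cartan subgroup of $\GL(E[p])$.

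The step demanding real care is the descent of the necklace: a $k$-rational point of a coarse moduli space need not be represented by an object defined over $k$, the obstruction residing in automorphisms and the resulting twists. What saves the argument is exactly that for $j\neq0,1728$ the only automorphisms are $\pm1$, which act trivially on the projective data underlying a necklace; this collapses the twisting ambiguity, and its failure for $j\in\{0,1728\}$ — where $\Aut(E)$ is larger and can act nontrivially through $\PGL(E[p])$ — is precisely why those two $j$-invariants must be excluded.
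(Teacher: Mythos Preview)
Your proof is correct and follows essentially the same approach as the paper: descend $E$ to $k$ via its $j$-invariant, use that $\sigma(x)=x$ forces $\sigma(\mathfrak v)$ to differ from $\mathfrak v$ by an element of $\Aut(E)$, and invoke $\Aut(E)=\{\pm1\}$ for $j\notin\{0,1728\}$ to conclude $\sigma(\mathfrak v)=\mathfrak v$. You spell out two points the paper leaves implicit --- the explicit choice of a $k$-model of $E$ and the passage from the stabiliser in $\PGL(E[p])$ to a non-split Cartan in $\GL(E[p])$ via the scalar kernel --- but the substance is the same.
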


 \begin{proof}
   Let $(E, \mathfrak{v})$ be a representation of $x$. So $E^{\sigma}$ is $\bar k$-isomorphic to $E$. As usual $\sigma\bigl(j(E)\bigr) = j(E^{\sigma}) = j(E)$, shows that $j(E)\in k$ and hence we may assume that $E$ is defined over $k$.

   For every $\sigma \in G_k$ there is an automorphism $\Psi_\sigma\in\Aut_{\bar k}(E)$ such that $\Psi_{\sigma}(\mathfrak v) = \sigma(\mathfrak v)$. If $j(E)\not\in\{ 0, 1728\}$ then there are no additional automorphisms besides $[\pm1 ]$. Therefore they all act by scalars on $E[p]$ and thus they act trivially on $\PP\bigl(E[p]\bigr)$. It follows that $\mathfrak v = \sigma(\mathfrak v)$. So the image of $\bar \rho_p(E)$ lands in the stabiliser of $\mathfrak v$, which is a non-split Cartan subgroup.
 \end{proof}

 If $\mathfrak v$ is defined over $k$ then there exists a cyclic extension $L/k$ of degree dividing $p+1$ such that all cyclic subgroups $C$ of $E[p]$ are defined over $L$.

 The analogous statement holds for $\Yn^{+}(k)$: Every point in $\Yn^{+}(k)$ can be represented by a pair $(E,\mathfrak v)$ with $E$ being defined over $k$. If $j(E)\not\in\{ 0, 1728\}$, then the necklace $\mathfrak v$ has also to be defined over $k$ and the residual Galois representation takes values in the normaliser of a non-split Cartan subgroup of $\GL\bigl(E[p]\bigr)$.

 The fibres in $\Yn$ and $\Yn^{+}$ above the points $j=0$ or $j=1728$, contain ramified points and elliptic points. We will discuss the elliptic points   in detail in Section~\ref{elliptic_points_subsec}. A ramified point $x$ in one of those fibres can still be represented by $(E,\mathfrak v)$ with $E$ defined over $\QQ$. However we will show now that the field of definition of $x$ is not equal to the field of definition of $\mathfrak v$ if $p>3$: In both cases, the curve $E$ has complex multiplication and by a general result (see Corollary~5.20 in Rubin's part in~\cite{cetraro}), the image of the Galois representation $\rho\colon G_{\QQ} \to \GL\bigl(E[p]\bigr)$ contains the image of all the automorphisms of $E$. Since $x$ is ramified, there is an automorphism $g$ such that $g(\mathfrak v) \neq \mathfrak v$. Hence there is an element $\sigma\in G_{\QQ}$ that sends $\mathfrak v$ to $g(\mathfrak v)$. Now $\sigma$ does not fix $\mathfrak v$, but it fixes $x$, which is also represented by $\bigl(E, g(\mathfrak v)\bigr)$.


\subsection{A lemma on antipodal pearls and cross-ratios}\label{antipodal_subsec}
 \begin{wrapfigure}{r}{33mm}%
   \vspace{-10pt}
   \centering%
   \includegraphics[width=3cm]{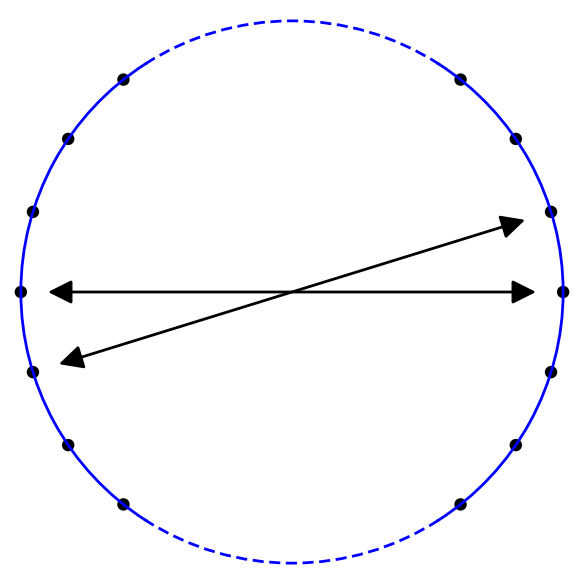}%
   \vspace{-30pt}%
 \end{wrapfigure}
 Let $E$ an elliptic curve over an algebraically closed field $\bar k$ of characteristic different from $p$.
 The following definition and lemma will be used in many places later on.

 \begin{defi}
   Let $\mathfrak v = (C_0, C_1,\dots, C_p)$ be a necklace in $E$. Two pearls $C_i$ and $C_j$ are called {\em antipodal in $\mathfrak v$} if $i\equiv j + \tfrac{p+1}{2}\pmod{p+1}$. In other words if they are diametrically opposed when we represent the necklace as a regular $(p+1)$-gon. If $A$ and $B$ are antipodal in $\mathfrak v$, we write $A\antip B \in \mathfrak v$.
 \end{defi}

 \begin{lem}\label{antip_pairing_lem}
  Let $A$, $B$, $C$, $D$ be four distinct cyclic subgroups of order $p$ in an elliptic curve $E$. There are $(p-1)/2$ necklaces in which $A\antip B.$ If the cross-ratio $[A,B;C,D]$ is a square in $\Fp^{\times}$, then there is no necklace $\mathfrak v$ such that $A\antip B \in \mathfrak v$ and $C\antip D\in \mathfrak v$. If instead $[A,B;C,D]$ is a non-square in $\Fp^{\times}$, then there is exactly one necklace $\mathfrak v$ such that $A\antip B \in \mathfrak v$ and $C\antip D\in \mathfrak v$.
 \end{lem}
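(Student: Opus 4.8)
The plan is to translate the entire statement into a counting problem for involutions of $\PGL\bigl(E[p]\bigr)$. To a necklace $\mathfrak v$ I attach its \emph{half-turn} $\iota_{\mathfrak v}:=h^{(p+1)/2}$, where $h\in\mathcal C_\gamma$ is a generator of the non-split Cartan subgroup stabilising $\mathfrak v$. This is the unique element of order $2$ in that cyclic group, so it is independent of the chosen generator and of the orientation, and straight from the definition of antipodality one has, for pearls $A,B$ of $\mathfrak v$, that $A\antip B\in\mathfrak v$ if and only if $\iota_{\mathfrak v}(A)=B$. Since a necklace lists \emph{all} $p+1$ subgroups, the given $A,B,C,D$ are pearls of every necklace, so the three assertions become exactly statements about the half-turns that swap the prescribed pairs.

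First I would record that $\mathfrak v\mapsto\iota_{\mathfrak v}$ is a bijection from necklaces to \emph{non-split} involutions of $\PGL\bigl(E[p]\bigr)$, i.e.\ involutions whose characteristic polynomial is irreducible over $\Fp$. That $\iota_{\mathfrak v}$ is non-split is a one-line eigenvalue computation: a representative of $h$ has eigenvalues $\gamma$ and $\bar\gamma=\gamma^{p}$, so $h^{(p+1)/2}$ has eigenvalues $\pm\gamma^{(p+1)/2}$, which are Galois-conjugate but not in $\Fp$ because $\gamma^{(p^2-1)/2}=-1$. Conversely a non-split involution lies in a unique non-split Cartan subgroup, which by Corollary~\ref{gset_cor} corresponds to a unique necklace; this furnishes the inverse map.

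This reduces the first assertion to counting non-split involutions with $\iota(A)=B$. Normalising $A=(1:0)$ and $B=(0:1)$, every involution interchanging these two points is the Möbius map $z\mapsto k/z$ for a unique $k\in\Fp^{\times}$, with characteristic polynomial $X^2-k$; it is non-split exactly when $k$ is a non-square. As $\Fp^{\times}$ has $(p-1)/2$ non-squares, there are $(p-1)/2$ such involutions, hence $(p-1)/2$ necklaces with $A\antip B$. For the last two assertions I normalise instead $A=(1:0)$, $B=(0:1)$, $C=(1:1)$, so $D$ has coordinate $\lambda=[A,B;C,D]$. Imposing $\iota(C)=D$ on $z\mapsto k/z$ forces $k=\lambda$, and then $\iota(D)=\lambda/\lambda=C$ holds automatically; thus there is a \emph{unique} involution interchanging both pairs, and it is non-split precisely when $\lambda$ is a non-square. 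Through the bijection this yields no necklace when $[A,B;C,D]$ is a square and exactly one when it is a non-square.

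I would close by noting the dichotomy is well posed: interchanging $A$ with $B$, or $C$ with $D$, replaces $\lambda$ by $\lambda^{-1}$, while interchanging the two pairs fixes $\lambda$, so the square class of $[A,B;C,D]$ depends only on the unordered pairs $\{A,B\}$ and $\{C,D\}$. The main obstacle is entirely in the second paragraph, namely setting up the dictionary between necklaces and non-split involutions and checking that a half-turn is always non-split; once this is in place the three counts are elementary projective geometry, with the square-versus-non-square behaviour of the cross-ratio appearing simply as the condition for the unique swapping involution to have irreducible characteristic polynomial.
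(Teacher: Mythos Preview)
Your proof is correct and is essentially the same argument as the paper's: both pivot on the observation that a necklace is determined by the unique order-$2$ element (your ``half-turn'') in its stabilising non-split Cartan, and then identify the involutions swapping $A\leftrightarrow B$ (and optionally $C\leftrightarrow D$) as the matrices $\bigl[\ma{0}{d}{1}{0}\bigr]$, which are non-split precisely when $d$ is a non-square. The only cosmetic difference is that you isolate the bijection \{necklaces\} $\leftrightarrow$ \{non-split involutions\} as a standalone lemma before running the count, whereas the paper folds it into the body of the argument.
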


 \begin{proof}
   We may choose a basis of $E[p]$ identifying $\PP(E[p])$ with $\PP^1(\Fp)$  in such a way that $A=\infty$, $B=0$ and $C=1$. Then $D=d$ for some $d\in \Fp^{\times}\setminus\{1\}$. Now $[A,B;C,D] = d$.

   If $d$ is a non-square, then the matrix $g=[\ma{0}{d}{1}{0}]$ is an element of order two without a fixed point in $\PP^1(\Fp)$. Hence it belongs to a unique non-split Cartan subgroup $\mathcal H$. Then the necklace $\mathfrak v$ whose stabiliser is the normaliser of $\mathcal H$ is a necklace such that $A\antip B\in\mathfrak v$ and $C\antip D\in\mathfrak v$ as $g(A) = B$ and $g(C)=D$.

   Conversely, if we have such a necklace $\mathfrak v$ for $A$, $B$, $C$, $D$, then the unique element of order $2$ which preserves the orientation on $\mathfrak v$, must send $A$ to $B$ and $C$ to $D$. Hence it is of the form $g=[\ma{0}{d}{1}{0}]$. However if it has no fixed points in $\PP^1(\Fp)$, then $d$ has to be a non-square in $\Fp^{\times}$.

   Finally, we have to count how many necklaces have $A\antip B\in \mathfrak v$. By the above proof, this is the same as to count how many matrices $g=[\ma{0}{d}{1}{0}]$ belong to a non-split Cartan subgroup. That is $\tfrac{p-1}{2}$ as there are that many non-squares $d$ in $\Fp^{\times}$.
 \end{proof}


\subsection{Elliptic points}\label{elliptic_points_subsec}

 We proceed to count elliptic points using our moduli description. Our results in Propositions~\ref{elliptic_points_prop} and~\ref{elliptic_points_plus_prop} below agree with the more general calculations by Baran in Proposition~7.10 in \cite{baran}. Assume for this that $p>3$.

 Consider the canonical coverings $\Xn\longrightarrow X(1)$ and $\Xn^+\longrightarrow X(1)$. An \emph{elliptic point} on $\Xn$ or $\Xn^+$ is a point in the fibre of a point in $X(1)$ represented by an elliptic curve $E$ with $\Aut(E)\neq\{\pm 1\}$. Hence, an elliptic point on $\Xn$ can be represented by a pair $(E,\mathfrak v)$ such that there is an automorphism on $E$ that induces a non-trivial element $g\in \PGL\bigl(E[p]\bigr)$ which fixes $\mathfrak v$. Consider the involution $w$ on $\Xn$ which reverses the orientation of the oriented necklaces. An elliptic point on $\Xn^{+}$ can be viewed as a pair $(E,\{\mathfrak v,w\mathfrak v\})$ with an automorphism $g \in \PGL\bigl(E[p]\bigr)$ and an oriented necklace $\mathfrak v$ such that either $g(\mathfrak v) = \mathfrak v$ or $g(\mathfrak v) = w(\mathfrak v)$. In the latter case, we say that $\mathfrak v$ and its necklace $\{\mathfrak v, w(\mathfrak v)\}$ is \textit{flipped} by $g$.

 First note that if $(E,\cdot)$ is an elliptic point then $j(E)=1728$ and $g$ is of order two or $j(E)=0$ and $g$ is of order three. These are elliptic curves with complex multiplication and $E[p]$ becomes a free $\End(E)/p\End(E)$-module of rank $1$. So if $g$ is of order $2$ and $p\equiv 3\pmod{4}$ or if $g$ is of order $3$ and $p\equiv 2 \pmod{3}$, then $\End(E)/p\End(E) \cong \Fps$ and hence $g$ belongs to a unique non-split Cartan subgroup of $\PGL\bigr(E[p]\bigl)$. Instead, if $g$ is of order $2$ and $p\equiv 1 \pmod{4}$ or if $g$ is of order $3$ and $p\equiv 1 \pmod {3}$ then $\End(E)/p\End(E)\cong \Fp\oplus \Fp$ and therefore $g$ belongs to a unique split Cartan subgroup as it will have exactly two fixed points.

\subsubsection{Fixed oriented necklaces}

 Let $(E,\mathfrak v)$ be an elliptic point on $\Xn$  with the oriented necklace $\mathfrak v$ fixed by $g$. Then $g$ is in the non-split Cartan subgroup stabilising $\mathfrak v$. Hence by the above, $p\equiv 3\pmod{4}$ if $g$ has order $2$ and $p\equiv 2 \pmod{3}$ if $g$ has order $3$. Conversely, if these congruence conditions are satisfied then $g$ is in a unique non-split Cartan subgroup which is the stabiliser of exactly two oriented necklaces, namely $\mathfrak v$ and $w\mathfrak v$. This gives the following result:

 \begin{prop}\label{elliptic_points_prop}
   For $r=2$ and $3$, let $e_r$ be the number of elliptic points in $\Xn$ with $g$ of order $r$. Then
   \begin{equation*}
      e_2 = 1 - \leg{-1}{p} = \begin{cases}
                                0 & \text{ if } p\equiv 1\pmod 4,\\
                                2 & \text{ if } p\equiv 3\pmod 4,
                              \end{cases}
      \quad \text{ and }\quad
      e_3 = 1-\leg{-3}{p} = \begin{cases}
                              0 & \text{ if } p\equiv 1\pmod 3,\\
                              2 & \text{ if } p\equiv 2\pmod 3.
                            \end{cases}
   \end{equation*}
   In the cases where $e_r=2$ the two corresponding oriented necklaces are in the same $w$-orbit.
 \end{prop}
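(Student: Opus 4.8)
The plan is to reduce $e_r$ to a fixed-point count on the set of oriented necklaces of the single CM curve attached to the relevant $j$-invariant, and then feed in the split/non-split dichotomy established just above. Over $\bar k$ there is, up to isomorphism, exactly one elliptic curve $E$ with $j(E)=1728$ and exactly one with $j(E)=0$. As recalled above, an elliptic point with $g$ of order $r=2$ forces $j(E)=1728$, where $\Aut(E)$ is cyclic of order $4$ with image $\langle g\rangle$ of order $2$ in $\PGL\bigl(E[p]\bigr)$; an elliptic point with $g$ of order $r=3$ forces $j(E)=0$, where $\Aut(E)$ is cyclic of order $6$ with image $\langle g\rangle$ of order $3$ (the scalars $[\pm1]$ acting trivially on $\PP\bigl(E[p]\bigr)$). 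By the working description of an elliptic point, $e_r$ is therefore the number of $\langle g\rangle$-orbits of oriented necklaces on $E$ that are fixed by $g$.

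I would first dispatch the vanishing cases. When $p\equiv 1\pmod 4$ (for $r=2$) or $p\equiv 1\pmod 3$ (for $r=3$), the discussion above gives $\End(E)/p\cong\Fp\oplus\Fp$, so $g$ has two fixed points on $\PP\bigl(E[p]\bigr)$ and hence lies in no non-split Cartan subgroup. Since the stabiliser of every oriented necklace is a non-split Cartan (Corollary~\ref{gset_cor}), $g$ fixes no oriented necklace and $e_r=0$, matching $1-\leg{-1}{p}=0$ resp.\ $1-\leg{-3}{p}=0$.

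In the non-trivial cases, $p\equiv 3\pmod 4$ (for $r=2$) or $p\equiv 2\pmod 3$ (for $r=3$), the discussion above shows that $g$ lies in a unique non-split Cartan $\mathcal H_g$, which is the stabiliser of exactly two oriented necklaces $\mathfrak v$ and $w\mathfrak v$. These are the only oriented necklaces fixed by $g$: any $\mathfrak{v}'$ with $g(\mathfrak{v}')=\mathfrak{v}'$ has $\Stab(\mathfrak{v}')$ a non-split Cartan containing $g$, hence $\Stab(\mathfrak{v}')=\mathcal H_g$ and $\mathfrak{v}'\in\{\mathfrak v,w\mathfrak v\}$. To conclude I would check that $\mathfrak v$ and $w\mathfrak v$ yield two distinct points of $\Xn$: the image $\langle g\rangle$ of $\Aut(E)$ fixes each of them, so the $\Aut(E)$-orbits are the singletons $\{\mathfrak v\}$ and $\{w\mathfrak v\}$, and $w\mathfrak v\neq\mathfrak v$ because $w$ acts freely (there are $p(p-1)$ oriented against $p(p-1)/2$ unoriented necklaces by Corollary~\ref{gset_cor}). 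Hence $e_r=2$, agreeing with $1-\leg{-1}{p}=2$ resp.\ $1-\leg{-3}{p}=2$, and the two oriented necklaces lie in the single $w$-orbit $\{\mathfrak v,w\mathfrak v\}$, which gives the last assertion. The evaluations $\leg{-1}{p}=(-1)^{(p-1)/2}$ and the supplementary law $\leg{-3}{p}=1$ iff $p\equiv 1\pmod 3$ turn these into the stated case distinctions.

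The main obstacle is exactly this last distinctness check: one must make sure the two fixed oriented necklaces genuinely give two separate points of the coarse space $\Xn$ rather than being merged by an automorphism of $E$. This works because $\Aut(E)$ acts on $\PP\bigl(E[p]\bigr)$ through the cyclic group $\langle g\rangle$ and therefore fixes, rather than flips, each of $\mathfrak v$ and $w\mathfrak v$; the opposite phenomenon, an automorphism carrying $\mathfrak v$ to $w\mathfrak v$, is precisely what will have to be treated separately for $\Xn^+$.
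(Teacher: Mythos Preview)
Your argument is correct and follows essentially the same route as the paper: both use the split/non-split dichotomy for $g$ established just before the proposition, conclude $e_r=0$ when $g$ lies in a split Cartan, and in the remaining case identify the fixed oriented necklaces as the pair $\{\mathfrak v,w\mathfrak v\}$ attached to the unique non-split Cartan containing $g$. Your proof is in fact slightly more careful than the paper's, which leaves implicit the check that $\mathfrak v$ and $w\mathfrak v$ are not identified by $\Aut(E)$ and hence really give two distinct points of $\Xn$.
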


\subsubsection{Flipped necklaces}

 Let $(E,\mathfrak v)$ be an elliptic point on $\Xn^+$ where the  necklace $\mathfrak v=\{\vec{\mathfrak v},\,w\vec{\mathfrak v}\}$ is flipped by $g$.  If $g$ were of order $3$, we would have $\vec{\mathfrak v}=g^3(\vec{\mathfrak v})=w(\vec{\mathfrak v})$. Hence $g$ is of order $2$. The involution $g$ is in a split Cartan subgroup if $p\equiv 1\pmod 4$ and in a non-split Cartan subgroup if $p\equiv 3\pmod 4$.

 \begin{lem}\label{flipped_one_lem}
   Suppose that $p\equiv 1\pmod 4$ and let $A,A'$ denote the two fixed points of $g$ in $\PP(E[p])$. A necklace $\mathfrak v$ is flipped by $g$ if and only if $A$ and $A'$ are antipodal in $\mathfrak v$. Consequently, there are $\tfrac{p-1}{2}$ necklaces flipped by $g$.
 \end{lem}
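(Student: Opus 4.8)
The plan is to reduce the condition that $g$ flips $\mathfrak v$ to a purely combinatorial statement about the regular $(p+1)$-gon of pearls. The key observation is that the $p+1$ pearls of any necklace are precisely all the points of $\PP\bigl(E[p]\bigr) = \PP^1(\Fp)$; in particular the two fixed points $A,A'$ of $g$ on $\PP^1(\Fp)$ are themselves pearls of $\mathfrak v$. By the hypothesis $p\equiv 1\pmod 4$, recalled just before the lemma, the involution $g$ lies in a split Cartan subgroup and therefore has exactly these two fixed points.

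First I would set up the dihedral picture. Writing $h\colon C_i\mapsto C_{i+1}$ for the rotation generating the non-split Cartan that stabilises the oriented necklace $\vec{\mathfrak v}$, Corollary~\ref{gset_cor} identifies the stabiliser of the unoriented necklace $\mathfrak v$ in $\PGL\bigl(E[p]\bigr)$ with the normaliser $\mathcal N = \mathcal N\bigl(\langle h\rangle\bigr)$. The $p+1$ elements of $\mathcal N\setminus\langle h\rangle$ are exactly the orientation-reversing symmetries of $\mathfrak v$, acting on the pearls as $C_i\mapsto C_{c-i}$ for some $c\in\ZZ/(p+1)$. Thus $g$ flips $\mathfrak v$, that is $g(\vec{\mathfrak v}) = w(\vec{\mathfrak v})$, if and only if $g$ is one of these reflections. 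Setting $m=\tfrac{p+1}{2}$, such a reflection has fixed pearls exactly when $2i\equiv c\pmod{p+1}$ is solvable, namely when $c$ is even, in which case the fixed pearls are the antipodal pair $C_{c/2}$ and $C_{c/2+m}$, and none when $c$ is odd. Since a flipping $g$ has the two fixed points $A,A'$, which are pearls, it must be of the even type and $A\antip A'$ in $\mathfrak v$. This proves the `only if' direction.

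For the converse, suppose $A=C_k$ and $A'=C_{k+m}$ are antipodal in $\mathfrak v$. The reflection $\rho\colon C_i\mapsto C_{2k-i}$ lies in $\mathcal N\setminus\langle h\rangle$, hence flips $\mathfrak v$, is an involution, and fixes both $A$ and $A'$. On the other hand the elements of $\PGL_2(\Fp)$ fixing the two distinct points $A$ and $A'$ form a split Cartan, a cyclic group of order $p-1$ containing a unique involution. Since $g$ is an involution fixing $A$ and $A'$, we get $g=\rho$, so $g$ flips $\mathfrak v$. Combining the two directions, the necklaces flipped by the fixed involution $g$ are exactly those in which its two fixed points $A,A'$ are antipodal; by Lemma~\ref{antip_pairing_lem} there are $\tfrac{p-1}{2}$ of these, which yields the stated count.

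The hard part will be making the passage between the linear involution $g\in\PGL_2(\Fp)$ and the combinatorial reflection of the $(p+1)$-gon rigorous in both directions. In the forward direction this rests on the fact that the fixed points of $g$ on $\PP^1(\Fp)$ coincide with the fixed pearls of the induced reflection, which is exactly where `pearls $=$ all of $\PP^1(\Fp)$' is indispensable; in the reverse direction it rests on pinning down $g$ among all symmetries fixing $A,A'$ via uniqueness of the involution in the relevant split Cartan. Once this dihedral dictionary is in place, the parity bookkeeping distinguishing even and odd $c$ is routine.
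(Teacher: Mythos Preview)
Your argument is correct. The forward direction is essentially the paper's own argument, just phrased for a general reflection $C_i\mapsto C_{c-i}$ rather than after normalising $A=C_0$; the parity analysis on $c$ is the same observation the paper makes when it computes $k=(p+1)/2$.

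The converse, however, follows a genuinely different route. The paper does not identify $g$ directly with a reflection of $\mathfrak v$. Instead it takes the unique order-$2$ rotation of the non-split Cartan (the map $C_i\mapsto C_{i+(p+1)/2}$ exchanging antipodal pearls), shows it commutes with $g$ via the same uniqueness-of-involution fact you use, deduces that $g$ sends antipodal pairs in $\mathfrak v$ to antipodal pairs in $\mathfrak v$, and then invokes Lemma~\ref{antip_pairing_lem} to force $g(\mathfrak v)=\mathfrak v$, ruling out the orientation-preserving case because $g$ lies in a split rather than a non-split Cartan when $p\equiv 1\pmod 4$. Your approach is more direct: you exhibit the specific reflection $\rho\colon C_i\mapsto C_{2k-i}$ in $\mathcal N\setminus\langle h\rangle$ fixing $A$ and $A'$, and then pin down $g=\rho$ because the split Cartan stabilising $\{A,A'\}$ is cyclic of order $p-1$ with a unique involution. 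This avoids the appeal to Lemma~\ref{antip_pairing_lem} in the converse and is arguably cleaner; the paper's version, on the other hand, sets up machinery that parallels the $p\equiv 3\pmod 4$ case treated immediately afterwards.
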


 \begin{wrapfigure}{r}{53mm}%
   \vspace{-10pt}
   \centering%
   \includegraphics[width=5cm]{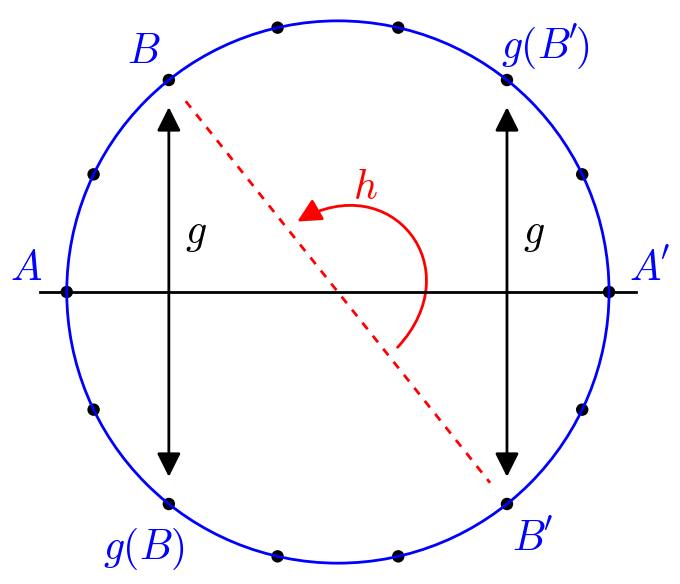}%
   \vspace{-10pt}%
 \end{wrapfigure}
  \noindent\textit{Proof.}
   Let $\vec{\mathfrak v}=(C_0,C_1,C_2,\dots,C_p)$ be a flipped oriented necklace with $C_0=A$. From $g(\vec{\mathfrak v})=w(\vec{\mathfrak v})$, we get $g(C_k)=C_{p+1-k}$ for all $k$, where the indices are taken modulo $p+1$. It follows that if $A'=C_k$ then $A'= g(A')=C_{p+1-k}$, so $k=(p+1)/2$ and $A$ and $ A'$ are antipodals in $ \mathfrak v=\{\vec{\mathfrak v}, w\vec{\mathfrak v}\}$.  Moreover from $g(C_k)=C_{p+1-k}$, we see that $g$ will act on  $\mathfrak v$, represented as a regular $(p+1)$-gon, as the reflection through the axis passing through $A$ and $A'$.

   Conversely, let $\mathfrak v$ be a necklace in which $A\antip A'$. Let $B\antip B'$ be two other antipodal  pearls in $\mathfrak v$. Let $h$ be the element of order $2$ in the normaliser of the non-split Cartan subgroup stabilising $\mathfrak v$. As it exchanges antipodal pairs in $\mathfrak v$, we have $h(A)=A'$ and $h(B)=B'$. Since $hgh^{-1}$ is also an involution that fixes $A$ and $A'$, it follows that $hgh^{-1} = g$ as there is a unique involution fixing two given points. Therefore $hg(B) = gh(B) = g(B')$ which implies that $g(B)\antip g(B')\in\mathfrak v$.

   As $g$ sends antipodal pairs in $\mathfrak v$ to antipodal pairs in $g(\mathfrak v)$, we also have $A\antip A'$  and $g(B)\antip g(B')$ in $g(\mathfrak v)$. Hence, by Lemma~\ref{antip_pairing_lem}, either $g(\vec{\mathfrak v})=\vec{\mathfrak v}$ or $g(\vec{\mathfrak v})=w\vec{\mathfrak v}$ where $\{\vec{\mathfrak v},w\vec{\mathfrak v}\}=\mathfrak v$. The first case is excluded because $g$ does not belong to a non-split Cartan subgroup if $p\equiv 1\pmod 4$.

   The end of the proof follows from the fact that there are $(p-1)/2$ necklaces such that $A$ and $A'$ are antipodal, again by Lemma~\ref{antip_pairing_lem}.
  \hfill\qedsymbol

 \begin{lem}\label{flipped_two_lem}
   Suppose that $p\equiv 3\pmod 4$. Let $A\in\PP\bigl(E[p]\bigr)$. Consider the map sending a necklace $\mathfrak v$ to the pearl antipodal to $A$ in $\mathfrak v$. This is a bijection between necklaces $\mathfrak v$ flipped by $g$ and the set of pearls $B\not\in \{A,g(A)\}$ such that $\bigl[A,B;g(A),g(B)\bigr]$ is a non-square in $\Fp$.
 \end{lem}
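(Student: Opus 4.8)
The plan is to deduce everything from Lemma~\ref{antip_pairing_lem}. Two preliminary remarks set the stage. First, a necklace is a cyclic arrangement of \emph{all} $p+1$ cyclic subgroups of order $p$, so every $A\in\PP\bigl(E[p]\bigr)$ is a pearl of every necklace and ``the pearl antipodal to $A$ in $\mathfrak v$'' is always defined; thus the map is at least defined on all necklaces flipped by $g$. Second, since $p\equiv 3\pmod 4$ the involution $g$ lies in a non-split Cartan subgroup, hence fixes no point of $\PP\bigl(E[p]\bigr)$ and in particular no pearl of any necklace.

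First I would check that the map takes values in the asserted set. Let $\mathfrak v$ be flipped by $g$ and let $B$ be the pearl antipodal to $A$. Writing an oriented representative as $(C_0,\dots,C_p)$ with $A=C_a$, the flip condition $g(\vec{\mathfrak v})=w(\vec{\mathfrak v})$ means $g(C_i)=C_{c-i}$ for some constant $c$ modulo $p+1$, and fixed-point-freeness forces $c$ to be odd. Since $p\equiv 3\pmod 4$, the antipodal shift $m=(p+1)/2$ is even, so the indices $a+m$ of $B$ and $c-a$ of $g(A)$ have opposite parity; hence $B\neq g(A)$, giving $B\notin\{A,g(A)\}$ and, with $g$ fixed-point-free, the four pearls $A,B,g(A),g(B)$ are distinct. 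Because $g$ flips $\mathfrak v$ it stabilises the underlying unoriented necklace and carries antipodal pairs to antipodal pairs, so $A\antip B\in\mathfrak v$ yields $g(A)\antip g(B)\in\mathfrak v$. Thus $\mathfrak v$ realises both antipodal pairs, and Lemma~\ref{antip_pairing_lem} forces $\bigl[A,B;g(A),g(B)\bigr]$ to be a non-square.

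Bijectivity then follows from the uniqueness half of Lemma~\ref{antip_pairing_lem}. Given an admissible $B$, the points $A,B,g(A),g(B)$ are distinct and the cross-ratio is a non-square, so there is a \emph{unique} necklace $\mathfrak v$ with $A\antip B$ and $g(A)\antip g(B)$. Applying $g$ (and using $g^2=1$) shows $g(\mathfrak v)$ also satisfies both relations, whence $g(\mathfrak v)=\mathfrak v$ by uniqueness; so $g$ stabilises $\mathfrak v$. In the stabiliser of $\mathfrak v$, namely the normaliser of its non-split Cartan subgroup, the only orientation-preserving involution is the antipodal map; since $B\neq g(A)$ rules out $g$ being that map, $g$ must reverse orientation, i.e.\ $\mathfrak v$ is flipped by $g$. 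This $\mathfrak v$ maps to $B$, proving surjectivity, while any flipped necklace mapping to $B$ realises both antipodal pairs and hence equals $\mathfrak v$, proving injectivity.

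The one genuinely delicate point is the parity argument giving $B\neq g(A)$: it is precisely here that $p\equiv 3\pmod 4$ enters, forcing $m$ even against the odd reflection-constant $c$, in contrast to the split situation of Lemma~\ref{flipped_one_lem}. Everything else is bookkeeping about which involutions in the normaliser of a non-split Cartan preserve or reverse orientation, together with repeated use of Lemma~\ref{antip_pairing_lem}.
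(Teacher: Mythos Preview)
Your proof is correct and follows essentially the same approach as the paper: both directions rest on Lemma~\ref{antip_pairing_lem}, the forward map lands in the claimed set because $g$ carries antipodal pairs to antipodal pairs in a flipped necklace, and surjectivity comes from constructing $\mathfrak v$ via the lemma and then ruling out the orientation-preserving case by observing that the unique involution in the non-split Cartan is the antipodal rotation. The only cosmetic difference is in the verification that $B\neq g(A)$: the paper argues geometrically via the reflection axis of the $(p+1)$-gon (if $B=g(A)$ then $A,B$ lie on the line orthogonal to the axis, forcing $p+1\equiv 2\pmod 4$), whereas you phrase the same obstruction as a parity check on indices (the reflection constant $c$ is odd while the antipodal shift $(p+1)/2$ is even); these are the same argument in different clothing.
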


 \begin{wrapfigure}{r}{53mm}%
   \vspace{-10pt}
   \centering%
   \includegraphics[width=5cm]{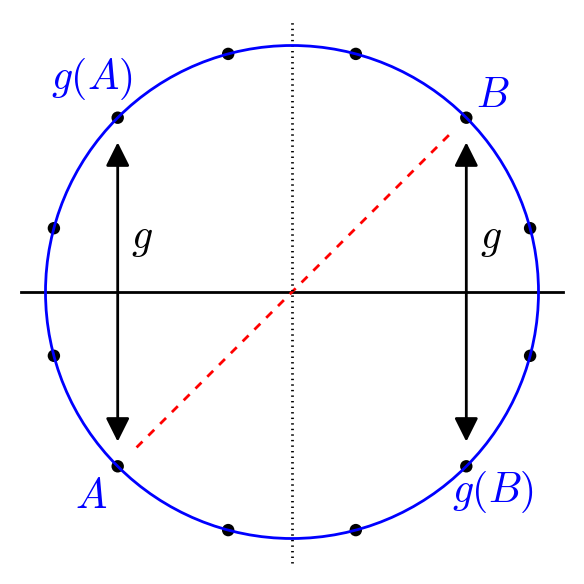}%
   \vspace{-10pt}%
 \end{wrapfigure}
  \noindent\textit{Proof.}
   Let $\mathfrak v$ be a necklace flipped by $g$. As above, from $g(\vec{\mathfrak v})=w(\vec{\mathfrak v})$ we get  $g(C_k)=C_{p+1-k}$ for all $k$ and one can see that, since $p\equiv 3\pmod 4$,  $g$ will act on $\mathfrak v$ as a reflection through an axis that does not pass through a corner of the regular $(p+1)$-gon. Let $B$ be antipodal to $A$ in $\mathfrak v$. So $B\neq A$. If $B$ were equal  to $g(A)$, then $A$ and $B$ would be on the line orthogonal to the axis of reflection of $g$. But this would imply that $p+1\equiv 2 \pmod 4$, and hence $B\neq g(A)$. Finally, since $g$ flips $\mathfrak v$, we see that $g(A) \antip g(B)\in\mathfrak v$. By Lemma~\ref{antip_pairing_lem}, it follows that $\bigl[A,B;g(A),g(B)\bigr]$ is a non-square modulo $p$. The same lemma also shows that our map $\mathfrak v\mapsto B$ is injective.

   Conversely, suppose that $B\not\in\{A,g(A)\}$ is such that the cross-ratio $\bigl[A,B;g(A),g(B)\bigr]$ is a non-square modulo $p$. Since $A$, $B$, $g(A)$ and $g(B)$ are all distinct, Lemma~\ref{antip_pairing_lem} applies to show that there is a necklace $\mathfrak v$ with $A \antip B$ and $g(A)\antip g(B)$. Now $g(\mathfrak v)$ has also $g(A) \antip g(B)$ and $A\antip B$. The same lemma now shows that $g(\mathfrak v) = \mathfrak v$. If the orientation of $\mathfrak v$ were fixed rather than flipped, then $g(A)$ would be $B$. Hence our map is surjective, too.
 \hfill\qedsymbol

 \begin{prop}\label{elliptic_points_plus_prop}
   For $r=2$ or $3$, let $e_r^+$ be the number of elliptic points with $g$ of order $r$ in $\Xn^+$. Then
   \begin{equation*}
     e_2^+ = \frac{p+1}{2}  -\leg{-1}{p} =
     \begin{cases}
        \frac{p-1}{2} & \text{ if } p\equiv 1\pmod 4,\\
        \frac{p+3}{2} & \text{ if } p\equiv 3\pmod 4,
     \end{cases}
   \end{equation*}
   and
   \begin{equation*}
     e_3^+ = \frac{1}{2} -\frac{1}{2} \, \leg{-3}{p} =
     \begin{cases}
       0 & \text{ if } p\equiv 1 \pmod 3,\\
       1 & \text{ if } p\equiv 2\pmod 3.
     \end{cases}
   \end{equation*}
 \end{prop}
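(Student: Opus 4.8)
The plan is to count elliptic points on $\Xn^+$ by splitting into the two types of automorphism behaviour already isolated in the preceding discussion: a necklace that is \emph{fixed} by $g$ and a necklace that is \emph{flipped} by $g$. An elliptic point on $\Xn^+$ is a pair $(E,\{\vec{\mathfrak v},w\vec{\mathfrak v}\})$ with $g\in\PGL\bigl(E[p]\bigr)$ coming from a non-trivial automorphism, so $j(E)=1728$ with $g$ of order $2$, or $j(E)=0$ with $g$ of order $3$. For each such $E$ I would count the orbits of $w$-pairs $\{\vec{\mathfrak v},w\vec{\mathfrak v}\}$ that $g$ stabilises as an \emph{unoriented} necklace, i.e. where $g$ either fixes or flips $\vec{\mathfrak v}$.

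For $e_2^+$ I would treat the two congruence classes of $p\bmod 4$ separately, as the nature of $g$ changes. When $p\equiv 1\pmod 4$ the involution $g$ lies in a split Cartan, so it cannot fix any oriented necklace (that would force $g$ into a non-split Cartan); all contributions are flipped necklaces, and Lemma~\ref{flipped_one_lem} gives exactly $\tfrac{p-1}{2}$ of them. When $p\equiv 3\pmod 4$ the involution $g$ lies in a unique non-split Cartan; by Proposition~\ref{elliptic_points_prop} it fixes exactly one unoriented necklace (the $w$-orbit $\{\vec{\mathfrak v},w\vec{\mathfrak v}\}$), contributing $1$, while Lemma~\ref{flipped_two_lem} counts the flipped necklaces as the number of pearls $B\notin\{A,g(A)\}$ with $\bigl[A,B;g(A),g(B)\bigr]$ a non-square. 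I expect this last count to come out to $\tfrac{p+1}{2}$, so that $e_2^+=1+\tfrac{p+1}{2}=\tfrac{p+3}{2}$ in this case. Both cases are then packaged into the single closed form $e_2^+=\tfrac{p+1}{2}-\leg{-1}{p}$.

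For $e_3^+$ the situation is simpler because $g$ of order $3$ can never flip a necklace: flipping squares to the identity on the unoriented necklace while $g^2\neq 1$, and indeed the remark opening the ``flipped necklaces'' discussion rules out order $3$. So the only elliptic points of order $3$ on $\Xn^+$ come from necklaces genuinely fixed by $g$, which exist precisely when $p\equiv 2\pmod 3$ (so that $g$ sits in a non-split Cartan). In that case the unique non-split Cartan containing $g$ stabilises one unoriented necklace, giving $e_3^+=1$, and when $p\equiv 1\pmod 3$ there are none. This matches $e_3^+=\tfrac12-\tfrac12\leg{-3}{p}$.

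The main obstacle I anticipate is the flipped-necklace count for $p\equiv 3\pmod 4$: translating Lemma~\ref{flipped_two_lem} into an exact integer requires counting the pearls $B$, distinct from $A$ and $g(A)$, for which the cross-ratio $\bigl[A,B;g(A),g(B)\bigr]$ is a non-square in $\Fp^\times$. I would fix coordinates diagonalising or normalising $g$ (using that $g$ is an involution with a definite fixed-point structure) to express this cross-ratio as an explicit rational function of the coordinate of $B$, and then count non-square values as $B$ ranges over $\PP^1(\Fp)$ minus the excluded points. The care needed is in handling the boundary points and in confirming the square/non-square tally yields exactly $\tfrac{p+1}{2}$; once that computation is in hand, assembling the stated formulas is immediate.
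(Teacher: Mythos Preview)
Your proposal is correct and follows essentially the same route as the paper: decompose into fixed plus flipped necklaces, invoke Proposition~\ref{elliptic_points_prop} and Lemmas~\ref{flipped_one_lem}--\ref{flipped_two_lem}, and reduce the $p\equiv 3\pmod 4$ flipped count to counting $B$ with non-square cross-ratio. The paper carries out exactly the computation you anticipate by taking $A=(1:0)$, $g(A)=(0:1)$, so that $g=\bigl[\ma{0}{-1}{1}{0}\bigr]$ and the cross-ratio becomes $1+b^2$ for $B=(1:b)$; the number of $b\in\Fp^{\times}$ with $1+b^2$ a non-square is then read off from the $p+1$ points on the projective conic $a^2+b^2=c^2$, giving $\tfrac{p+1}{2}$ as you predicted.
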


 \begin{proof}
   The number of elliptic points for $\Xn^{+}$ is the sum of the number of fixed and the number of flipped necklaces. In Proposition~\ref{elliptic_points_prop} we counted the fixed ones. We have already counted the flipped necklaces for $p\equiv 1 \pmod{4}$ in Lemma~\ref{flipped_one_lem}. Now suppose $p\equiv 3 \pmod{4}$ and let $A\in\PP\bigl(E[p]\bigr)$. By Lemma~\ref{flipped_two_lem}, we must count how many pearls $B$ there are such that $B\not\in \{A,g(A)\}$ and $\bigl[A,B;g(A),g(B)\bigr]$ is a non-square in $\Fp$.

   Let us choose a basis of $E[p]$ such that $A=\infty$ and $g(A)=0$. Let $b\in\Fp^\times$ such that $B=1/b$. Then $g=\bigl[\ma{0}{-1}{1}{0} \bigr] $ and $g(B)=-b$. Hence $\bigl[A,B; g(A),g(B)\bigr]=1+b^2$. So we have to count the number of $b\in\Fp^{\times}$ such that $1+b^2$ is a non-square. One finds that there are $\tfrac{p+1}{2}$ such $b$ by counting the cases when $1+b^2$ is a square using that there are $p+1$ points on a projective conic $a^2 + b^2 = c^2$.
 \end{proof}


\subsection{Genus}\label{genus_subsec}

 From the above, we can now proceed to compute the genus of our modular curves. Of course, we find the well-known formulae, as for instance in Appendix~A.5 to~\cite{serre}, \cite{baran} or \cite{chen1}. The reader can also find tables for the genus of $\Xn$ and $\Xn^+$ for small primes $p$ in \cite{baran}.

 The Riemann-Hurwitz formula applied to the modular curve $X_\mathcal H$ associated to a subgroup of finite index $\mathcal H$ of $\GL_2(\Fp)$ and with the canonical morphism $X_\mathcal H\to X(1)$ of degree $d$, gives the following formula for the genus $g(X_\mathcal H)$ of $X_\mathcal H$:
 \begin{equation*}
   g(X_\mathcal H)=1+\frac{d}{12}-\frac {e_2}{4}-\frac{e_3}{3}-\frac{e_\infty}{2}
 \end{equation*}
 where $e_r$ is the number of elliptic points in $X_H$ of order $r$ and $e_\infty$ is the number of cusps.

 With the results of Sections~\ref{elliptic_points_subsec} and~\ref{cusps_subsec}, a straightforward computation gives the following.

 \begin{prop}
   The genera of $\Xn$ and $\Xn^+$ are
   \begin{equation*}
     g(\Xn)=\frac{1}{12}\left(p^2-7p+11+3\leg{-1}{p}+4\leg{-3}{p}\right)
   \end{equation*}
   and
   \begin{equation*}
     g(\Xn^+)=\frac 1{24}\left(p^2-10p+23+6\leg{-1}{p}+4\leg{-3}{p}\right).
   \end{equation*}
 \end{prop}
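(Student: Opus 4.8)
The plan is to feed the data already assembled in this section---the degree of each canonical covering of $X(1)$, the elliptic point counts, and the cusp counts---into the genus formula displayed above and to simplify. No new geometric input is required; the work is purely arithmetic bookkeeping.

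First I would pin down the degree $d$ of each covering. For a generic elliptic curve $E$ with $\Aut(E)=\{\pm1\}$, the fibre of $\Xn\to X(1)$ consists of the oriented $\gamma$-necklaces on $E$, of which there are exactly $p(p-1)$ by Corollary~\ref{gset_cor}; equivalently $d=[\GL_2(\Fp):\mathcal H]=p(p-1)$, the scalar $-1$ lying in $\mathcal H$ and acting trivially on $\PP\bigl(E[p]\bigr)$. In the same way the fibre of $\Xn^{+}\to X(1)$ consists of the $p(p-1)/2$ necklaces on $E$, so that $d=[\GL_2(\Fp):\mathcal N]=p(p-1)/2$.

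Next I would substitute. For $\Xn$, insert $d=p^2-p$, the counts $e_2=1-\leg{-1}{p}$ and $e_3=1-\leg{-3}{p}$ from Proposition~\ref{elliptic_points_prop}, and $e_\infty=p-1$ from Section~\ref{cusps_subsec}. Clearing denominators by multiplying the genus formula through by $12$ gives
\begin{equation*}
  12\,g(\Xn)=12+(p^2-p)-3\Bigl(1-\leg{-1}{p}\Bigr)-4\Bigl(1-\leg{-3}{p}\Bigr)-6(p-1),
\end{equation*}
and collecting the constant and linear terms yields $p^2-7p+11+3\leg{-1}{p}+4\leg{-3}{p}$, as claimed. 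For $\Xn^{+}$ I would proceed identically with $d=(p^2-p)/2$, the counts $e_2^{+}=\tfrac{p+1}{2}-\leg{-1}{p}$ and $e_3^{+}=\tfrac12-\tfrac12\leg{-3}{p}$ from Proposition~\ref{elliptic_points_plus_prop}, and $e_\infty=(p-1)/2$; multiplying the formula through by $24$ and collecting terms produces $p^2-10p+23+6\leg{-1}{p}+4\leg{-3}{p}$.

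There is no real obstacle here: the only points requiring a little care are the determination of $d$ and the observation that the coverings $\Xn\to X(1)$ and $\Xn^{+}\to X(1)$ ramify only over $j\in\{0,1728\}$ and the cusp $j=\infty$, so that the genus formula---itself just Riemann--Hurwitz for a degree-$d$ map to the genus-zero curve $X(1)$ with ramification indices $2$, $3$ and the cusp widths---applies verbatim. One should also keep in mind the standing assumption $p>3$ under which the elliptic-point counts were derived; the remaining small primes can be checked by hand and yield genus $0$, presenting no difficulty.
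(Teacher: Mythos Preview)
Your proposal is correct and follows exactly the approach the paper itself indicates: it simply feeds the degree, the cusp count from Section~\ref{cusps_subsec}, and the elliptic-point counts from Propositions~\ref{elliptic_points_prop} and~\ref{elliptic_points_plus_prop} into the stated Riemann--Hurwitz formula and simplifies. The paper's own proof is just the single sentence ``a straightforward computation gives the following,'' so you have in fact spelled out more detail than the authors did.
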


 With the same method, one can compute the genus of other modular curves, for instance $X_0$ and $ \Xs^{+}$. The classical results (see for instance~\cite{shimura} and~\cite{chen1}) for their genus are
 \begin{equation*}
   g\bigl(\Xs^+\bigr) = \frac{1}{24} \left(p^2-8p+11-4\leg{-3}{p}\right)\
   \text{ and }\
   g\bigl(X_0\bigr) = \frac{1}{12} \left(p-6-3\leg{-1}{p}-4\leg{-3}p \right).
 \end{equation*}
 Then one can verify easily the relation noticed by Birch following Chen's calculation of genus and confirmed  by Chen's isogeny
 \begin{equation}\label{genus_eq}
   g\bigl(\Xn^+\bigr) + g\bigl(X_0\bigr) = g\bigl(\Xs^+\bigr).
 \end{equation}


\subsection{Hecke operators}

 Let $\ell$ be any prime distinct from $p$. Denote by $ \Xon^+(\ell,p)=X_0(\ell)\times_{X(1)}\Xn^+(p)$.  We recall how the Hecke correspondence $T_\ell$ is defined through the following two natural degeneracy maps $\rho$ and $\rho' \colon \Xon^+(\ell,p)\longrightarrow \Xn^+(p)$. The modular curve $\Xon^+(\ell,p)$ parametrises isomorphism classes $\bigl(E,(f,\mathfrak v)\bigr)$ of elliptic curves $E$ endowed with an $\ell$-isogeny $f\colon E \to E'$ and a necklace $\mathfrak v$. Let $\rho$ be the map obtained by forgetting the $\ell$-structure and $\rho'$ the map which sends $(E,(f,\mathfrak v))$ to $\bigl(E',f(\mathfrak v)\bigr)$. The image $f(\mathfrak v)$, defined as $\bigl(f(C_0), f(C_1), \dots, f(C_p)\bigr)$ when $\mathfrak v = (C_0,\dots, C_p)$, is indeed a necklace on $E' = f(E)$ since $\ell\neq p$.

 The correspondence $T_\ell$ on $\Xn^{+}$ is now defined as $\rho^*\circ \rho'_{*}$. It induces an endomorphism on $\Pic(\Xn^+)$ by Picard functoriality. On the divisor $(z)$ with the point $z$ in $\Xn^{+}$ represented by $(E,\mathfrak v)$, it is defined as
 \begin{equation*}
   T_{\ell}(z)=\sum_{\substack{f\colon E\to E' \\ \deg f = \ell}} \bigl(E', f(\mathfrak v) \bigr)
 \end{equation*}
 where the sum runs over all isogenies $f$ from $E$ of degree $\ell$.

 We will now verify that this moduli-theoretic description of $T_{\ell}$ via these correspondences coincide with the Hecke operators defined by double coset.

 Let us denote by $\mathcal N$ a normaliser of a non-split Cartan in $\GL_2(\Fp)$, by $\Gamma$ the congruence subgroup of matrices in $\SL_2(\ZZ)$ with image in $\mathcal N$ modulo $p$, and by $\Delta$ the set of integral matrices whose determinant is positive and coprime to $p$ and which reduce to a matrix in $\mathcal N$ modulo $p$. Then $T_\ell$ as a double coset is defined by $\Gamma\alpha\Gamma$ for any  $\alpha\in \Delta$ of determinant $\ell$.  Since such an element $\alpha$ reduces modulo $\ell$ to a non-zero matrix of determinant $0$, there exist $\gamma$ and $\gamma'$ in $\Gamma$ such that $\gamma\alpha\gamma'$ is of the form $\left(\begin{smallmatrix} a &b\ell\\ c\ell& d\ell \end{smallmatrix}\right)$ with integers $a$, $b$, $c$, and $d$. (Note that we can impose conditions modulo $p$ on $\gamma,\gamma'$ because $p$ and $\ell$ are coprime.) In other words, $\Gamma\alpha\Gamma$ contains an element of the form $\beta\left(\begin{smallmatrix} 1&0\\0&\ell \end{smallmatrix}\right)$ with $\beta \in \Gamma_0(\ell)$, so we may now suppose that $\alpha=\beta\left(\begin{smallmatrix} 1&0\\0&\ell \end{smallmatrix}\right)$, with $\beta\in\Gamma_0(\ell)$.

 We will then prove that  $\Gamma\cap \alpha^{-1}\Gamma\alpha = \Gamma\cap\Gamma^0(\ell)$, where we denote by $\Gamma^0(\ell)$ the matrices in $\SL_2(\ZZ)$ with right upper entry equal to $0$ modulo $\ell$: Since $\beta\in\SL_2(\ZZ)$, we have $\beta^{-1}\Gamma\beta \subset \SL_2(\ZZ)$, hence $ \bigl(\begin{smallmatrix} 1 & 0 \\ 0 & \ell \end{smallmatrix}\bigr)^{-1} \beta^{-1}\Gamma\beta \bigl(\begin{smallmatrix} 1 & 0 \\ 0 & \ell \end{smallmatrix}\bigr)$ are matrices of the form  $\bigr(\begin{smallmatrix} a & b\ell \\ c/\ell & d \end{smallmatrix}\bigr)$ and this shows the first inclusion. Conversely, if  $\gamma\in \Gamma\cap \Gamma^0(\ell)$, since $\alpha$ and $\gamma$ belong to $C$ modulo $p$, the product $\alpha\gamma\alpha^{-1}$ also belongs to it, so $\alpha\gamma\alpha^{-1}\in\Gamma$.

 Similarly, one can show that  $\Gamma\cap\alpha\Gamma\alpha^{-1} = \Gamma\cap\Gamma_0(\ell)$. Now, from  the fact that  $\Gamma\cap \alpha^{-1}\Gamma\alpha = \Gamma\cap\Gamma^0(\ell)$ or $\Gamma\cap\alpha\Gamma\alpha^{-1} = \Gamma\cap\Gamma_0(\ell)$, we  deduce in a classical manner, as explained for instance in Section~6.3 in~\cite{diamond_shurman} taking $\Gamma_1=\Gamma_2=\Gamma$, that the double coset description of $T_\ell$ coincide with the moduli-theoretic description we gave above.
 Compare with Theorem~1.11 in~\cite{kohen_pacetti} for a different proof.


\subsection{A pairing}\label{pairing_subsec}

 Given two necklaces $\mathfrak v$ and $\mathfrak w$ in $E$, we set
 \begin{equation*}
   \langle \mathfrak v,\mathfrak w\rangle = \# \Bigl\{ \{A,B\}\ \Bigm\vert\ A \antip B \in \mathfrak v \text{ and } A \antip B \in \mathfrak w \Bigr\}.
 \end{equation*}
 It is the number of antipodal pearls that $\mathfrak v$ and $\mathfrak w$ have in common. We can extend it linearly to $\bigoplus_{\text{all }\mathfrak v} \ZZ \mathfrak v$ regarded as an abelian group with an action by $\PGL\bigl(E[p]\bigr)$.

 \begin{prop}\label{pairing_prop}
   The pairing $\langle\cdot,\cdot\rangle$ is a positive non-degenerate symmetric $\PGL\bigl(E[p]\big)$-equivariant bilinear form on $\bigoplus_{\mathfrak v} \ZZ \mathfrak v$. We have $\langle \mathfrak v , \mathfrak v\rangle = \tfrac{p+1}{2}$ and $\langle \mathfrak v, \mathfrak w \rangle \in\{0, 1\}$ for all necklaces $\mathfrak v\neq \mathfrak w$.
 \end{prop}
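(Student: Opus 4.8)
The plan is to separate the four formal properties, which are quick, from positivity and non-degeneracy, which carry the real content. Bilinearity holds by construction, since $\langle\cdot,\cdot\rangle$ was defined on necklaces and extended $\ZZ$-linearly. Symmetry is immediate, as the defining condition ``$A\antip B\in\mathfrak v$ and $A\antip B\in\mathfrak w$'' is symmetric in $\mathfrak v$ and $\mathfrak w$. For $\PGL\bigl(E[p]\bigr)$-equivariance, any $g$ carries a necklace to a necklace and preserves antipodality, so $A\antip B\in\mathfrak v$ if and only if $g(A)\antip g(B)\in g(\mathfrak v)$; this matches the common antipodal pairs of $(\mathfrak v,\mathfrak w)$ with those of $(g\mathfrak v,g\mathfrak w)$ and gives $\langle g\mathfrak v,g\mathfrak w\rangle=\langle\mathfrak v,\mathfrak w\rangle$. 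Since the $p+1$ pearls of a single necklace split into $\tfrac{p+1}{2}$ antipodal pairs, $\langle\mathfrak v,\mathfrak v\rangle=\tfrac{p+1}{2}$. Finally, if distinct necklaces $\mathfrak v\neq\mathfrak w$ had two common antipodal pairs $\{A,B\}$ and $\{C,D\}$, these four subgroups would be distinct --- were, say, $A=C$, then $B$ and $D$ would both be the unique antipode of $A$ in $\mathfrak v$, forcing $\{A,B\}=\{C,D\}$ --- and then both $\mathfrak v$ and $\mathfrak w$ would satisfy $A\antip B$ and $C\antip D$, contradicting the uniqueness in Lemma~\ref{antip_pairing_lem}. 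Hence $\langle\mathfrak v,\mathfrak w\rangle\in\{0,1\}$.

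The substantive point is positivity together with non-degeneracy, and I would realise $\langle\cdot,\cdot\rangle$ as a pullback. Let $P$ be the free $\ZZ$-module on the unordered pairs $\{A,B\}$ of distinct cyclic subgroups of order $p$ in $E$, with the inner product for which these pairs form an orthonormal basis, and let $\phi$ send a necklace $\mathfrak v$ to its antipodal matching $\phi(\mathfrak v)=\sum_{A\antip B\in\mathfrak v}e_{\{A,B\}}$. By the very definition of the pairing, $\langle\mathfrak v,\mathfrak w\rangle$ equals the standard inner product of $\phi(\mathfrak v)$ and $\phi(\mathfrak w)$, so $\langle\cdot,\cdot\rangle$ is the pullback along $\phi$ of a positive definite form. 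It is therefore automatically positive semi-definite, and it is positive definite --- equivalently non-degenerate --- exactly when the matching vectors $\phi(\mathfrak v)$ are linearly independent, that is, when $\phi$ is injective.

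Everything thus reduces to the injectivity of $\phi$. By Corollary~\ref{gset_cor}, and since unordered pairs of points form the $\mathcal G$-set $\mathcal G/\mathcal S$, this is a $\mathcal G$-equivariant map of permutation modules $\phi\colon \QQ[\mathcal G/\mathcal N]\to\QQ[\mathcal G/\mathcal S]$, where $\mathcal G=\PGL\bigl(E[p]\bigr)$, $\mathcal N$ is the normaliser of a non-split Cartan (the stabiliser of a necklace) and $\mathcal S$ is the normaliser of a split Cartan (the stabiliser of an unordered pair of points of $\PP\bigl(E[p]\bigr)$); injectivity over $\QQ$ is equivalent to injectivity over $\CC$. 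I would decompose both sides into irreducible $\mathcal G$-representations, using that the multiplicity of an irreducible $\rho$ in $\CC[\mathcal G/H]$ is $\dim\rho^{H}$, and then check that every constituent of the necklace module also occurs in the pair module and that $\phi$ is nonzero on each corresponding isotypic component; both permutation modules being multiplicity-free, ``nonzero'' already forces injectivity there. This character-theoretic comparison for $\PGL_2(\Fp)$, and in particular verifying that $\phi$ annihilates no constituent, is the step I expect to be the main obstacle; concretely it amounts to showing that the Gram matrix $\tfrac{p+1}{2}I+N$, with $N$ recording which pairs of necklaces share an antipodal pair, is nonsingular. Once $\phi$ is injective, positivity and non-degeneracy follow at once.
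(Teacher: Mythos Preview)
Your treatment of the formal properties is correct and matches the paper's: bilinearity, symmetry, and $\PGL\bigl(E[p]\bigr)$-equivariance are immediate, $\langle\mathfrak v,\mathfrak v\rangle=\tfrac{p+1}{2}$ is a count of antipodal pairs, and the bound $\langle\mathfrak v,\mathfrak w\rangle\leq 1$ for $\mathfrak v\neq\mathfrak w$ follows from Lemma~\ref{antip_pairing_lem} exactly as you argue. Your positivity argument is also the paper's, only rephrased: the paper writes $\langle u,u\rangle=\sum_{\{A,B\}}\bigl(\sum_{\mathfrak v:\,A\antip B\in\mathfrak v}a_{\mathfrak v}\bigr)^2$, which is precisely the squared length of your $\phi(u)$ in the orthonormal basis of $P$.

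For non-degeneracy you set up the same framework as the paper --- your map $\phi$ is the paper's $\lambda$, and your Gram matrix is the matrix of $\varphi\circ\lambda$ in the necklace basis --- but you stop exactly where the content lies. You correctly reduce to showing that $\phi$ acts by a nonzero scalar on each irreducible constituent of $\QQ[\mathcal G/\mathcal N]$ and explicitly flag this as ``the main obstacle'', yet you give no mechanism for verifying it; a bare appeal to the character table is not carried out and is not what the paper does either. The paper's device is to introduce an auxiliary $\mathcal G$-endomorphism $\alpha$ of the pair module, sending $\{A,B\}$ to $\sum_{[A,B;C,D]=-1}\{C,D\}$, and to establish by an elementary cross-ratio count the operator identity $\lambda\circ\varphi+\alpha^{2}+\psi\circ\mu=[p]+j$ on $\QQ[\mathcal G/\mathcal S]$. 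On each nontrivial irreducible $W$ occurring in $\QQ[\mathcal G/\mathcal N]$ the terms $\psi\circ\mu$ and $j$ vanish (via the Brauer relation), so $c_W(\varphi\circ\lambda)=c_W(\lambda\circ\varphi)=p-c_W(\alpha)^2$; since all character values of $\PGL_2(\Fp)$ lie in $\QQ(\zeta_{p-1},\zeta_{p+1})$, which does not contain $\sqrt{p}$, one gets $c_W(\alpha)^2\neq p$. The trivial component is handled by a direct count. This arithmetic trick with $\alpha$ is the missing idea in your proposal, and the paper itself notes that a purely combinatorial substitute for it would be desirable.
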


 First, we note that we are left to prove that the pairing is positive, takes value $0$ or $1$ on distinct necklaces, and is non-degenerate. In this section, we only give the proof of the two first facts. The proof of non-degeneracy will be given in Section~\ref{pairing_proof_subsec} and numerical examples are in Section~\ref{examples_sec}. We will see that this pairing gives a more conceptual understanding of the eigenvalues computed by Chen~\cite{chen2} in his table~2.

 \begin{proof}
   The statement that $\langle \mathfrak v,\mathfrak w\rangle \in \{0,1\}$ for $\mathfrak v\neq \mathfrak w$ is a direct consequence of Lemma~\ref{antip_pairing_lem}: If $\langle \mathfrak v,\mathfrak w\rangle \geq 2$, then there are four distinct $A$, $B$, $C$, $D$ with both $A\antip B$ and $C\antip D$ in $\mathfrak v$ and $\mathfrak w$, contradicting the lemma.

   Let $u = \sum a_{\mathfrak v}\, \mathfrak v$ be an element in $\bigoplus \ZZ\, \mathfrak v$. We have
   \begin{equation*}
      \langle u,u\rangle = \sum_{\mathfrak v}\sum_{\mathfrak w} a_{\mathfrak v} \,a_{\mathfrak w} \,\langle \mathfrak v,\mathfrak w\rangle =
      \sum_{ \{A,B\} } \sum_{\substack{\mathfrak v\text{ with}\\ A\antip B\in \mathfrak v}}  \sum_{\substack{\mathfrak w\text{ with}\\ A\antip B\in \mathfrak w}} a_{\mathfrak v}\, a_{\mathfrak w}
      = \sum_{ \{A,B\} }\Bigl( \sum_{\substack{\mathfrak v\text{ with}\\ A\antip B\in \mathfrak v}} a_{\mathfrak v} \Bigr)^2 \geq 0,
   \end{equation*}
   where $\sum_{\{A,B\}}$ is the sum running over all unordered pairs of distinct cyclic subgroups of $E[p]$. Hence   the pairing is positive. The non-degeneracy of the pairing will be shown in Section~\ref{pairing_proof_subsec}.
 \end{proof}


\section{Chen's isogeny}\label{chen_sec}

\subsection{Definitions and statement}\label{chen_def_subsec}

 In~\cite{chen1}, Chen proved that $\Jac(\Xn^{+}) = \Jac(X_0^{+}(p^2))^{\text{new}}$. Edixhoven and de~Smit~\cite{desmit_edixhoven,edixhoven} found a different and rather elegant proof. Finally Chen gave in~\cite{chen2} an explicit description of his morphism
 \begin{equation*}
    \Jac(\Xs^{+} ) \to \Jac(\Xn^{+} )\times\Jac(X_0).
 \end{equation*}
 With our new moduli description this morphism can be described yet in another manner. Let $\bar k$ be an algebraically closed field of characteristic different from $p$. In Section~\ref{genus_subsec}, we have given the definitions of the modular curves $X_0$ and $\Xs^{+}$. The points in $\Xs^{+}(\bar k)$ can be represented as $\bar k$-isomorphism classes of the form $\bigl(E,\{A,B\}\bigr)$ where $\{A,B\}$ is a unordered pair of distinct cyclic subgroups of order $p$ in $E$. Let $y = \bigl(E, \{A, B\}\bigr)$ be a point in $\Xs^{+}$. We define $\varphi(y)$ on the divisor $(y)$ to be the sum of $(E,\mathfrak{v})$ where $\mathfrak{v}$ runs over all necklaces in which the pearls $A$ and $B$ are antipodal. This extends linearly to a map on the Jacobians.

 Further we define the maps $\psi$, $\mu$, and $\lambda$ in the diagram on the right as follows. If $y$ is the point $\bigl(E, \{A,B\}\bigr)$ as above, then $\mu(y)$ is the sum of the points $(E,A)$ and $(E,B)$ in $X_0$. If $x=(E,A)$ with $A$ a cyclic subgroup of order $p$ in $E$ is a point on $X_0$, then we set $\psi(x)$ equal to the sum of $\bigl(E, \{A,B\}\bigr)$ where $B$ runs through all cyclic subgroups of order $p$ in $E$ distinct from $A$. Finally if $z=(E, \mathfrak{v})$ is a point on $\Xn$ for some necklace $\mathfrak v$, then $\lambda(z)$ is the sum of all $\bigl(E, \{A,B\}\bigr)$ where $A$ and $B$ run through all pairs of antipodal pearls in the necklace $\mathfrak v$. All these three correspondences extend linearly to the corresponding Jacobians. As explained in Section~\ref{chen_grth_subsec}, those correspondences comes from degeneracy maps (where we replace $\pin^+$ by $\piin$ which will be defined in Section~\ref{chen_grth_subsec}).

 To recapitulate, we list in Figure~\ref{defs_fig} the definitions for future reference next to a diagram involving all relevant maps.

 \begin{figure}[hb]
  \begin{minipage}{7cm}
     \begin{equation*}
       \xymatrix@C+1em@R+1em{
         & X(p) \ar[d] & \\
         & X_{\mathcal A} \ar[dl]_{\pi_0} \ar[d]^{\pis^{+}} \ar[dr]^{\piin} & \\
         X_0 \ar@<2pt>@{..>}^{\psi}[r] \ar[dr]_{j_0} & \Xs^{+} \ar@<2pt>@{..>}^{\varphi}[r] \ar@<2pt>@{..>}^{\mu}[l]\ar[d]^{\js^{+}} & \Xn^{+} \ar[dl]^{\jn^{+}}  \ar@<2pt>@{..>}^{\lambda}[l] \\
         & X(1)\cong \PP^1 &
       }
     \end{equation*}
   \end{minipage}
 \begin{minipage}{\textwidth-7.5cm}
  \begin{align*}
   \psi\bigl( E, A\bigr) &= \sum_{\substack{B\text{ with}\\B\neq A}} \bigl(E, \{A,B\}\bigr) \\
   \mu\bigl(E, \{A,B\}\bigr) &= \bigl(E,A\bigr) + \bigl(E,B\bigr) \\
   \varphi\bigl( E, \{A,B\}\bigr) &= \sum_{\substack{\mathfrak v\text{ with}\\\mathfrak v \ni A \antip B}} \bigl(E,\mathfrak v\bigr) \\
   \lambda\bigl( E, \mathfrak v \bigr) &= \sum_{\substack{\{A,B\}\text{ with}\\A\antip B\in \mathfrak v}} \bigl(E,\{A,B\}\bigr)
  \end{align*}
 \end{minipage}
  \caption{The various maps in Chen's theorem}\label{defs_fig}
 \end{figure}
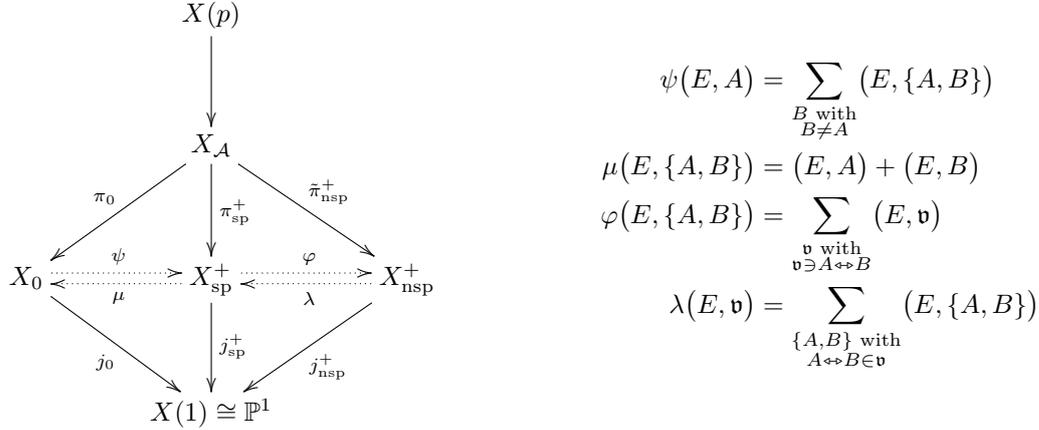

 We proceed to give a new proof of  Chen's result. Even if we believe that our proof is simpler and conceptually better visualised than the original proof in~\cite{chen2}, we have to emphasise that it is mostly a reformulation or translation of Chen's proof into our new language: As  we explore it in detail in Subsection~\ref{chen_grth_subsec}, the main argument is of the same nature as in Chen's proof.

 \begin{thm}[Chen-Edixhoven]\label{chen_thm}
   There are two complexes of abelian varieties over $\QQ$
   \begin{equation*}
     \xymatrix@R-7mm{
       0 \ar[r] & \Jac\bigl(X_0\bigr)\ar^{\psi}[r] & \Jac\bigl(\Xs^{+}\bigr)\ar^{\varphi}[r] & \Jac\bigl(\Xn^{+}\bigr) \ar[r] & 0 \\
       0 & \ar[l] \Jac\bigl(X_0\bigr) &  \ar^{\mu}[l] \Jac\bigl(\Xs^{+}\bigr) & \ar^{\lambda}[l] \Jac\bigl(\Xn^{+}\bigr) & \ar[l] 0
     }
   \end{equation*}
   whose cohomologies are finite groups.
 \end{thm}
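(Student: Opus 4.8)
The plan is to prove both statements after tensoring with $\QQ$: once we know that each row is a complex of $\QQ$-vector spaces, that its left map is injective and its right map surjective up to finite kernels and cokernels, and that the dimensions add up, then exactness in the middle—and hence finiteness of all three cohomology groups—follows from elementary linear algebra. The dimension count is precisely the genus relation~\eqref{genus_eq}, which gives $\dim\Jac(X_0)+\dim\Jac(\Xn^{+})=\dim\Jac(\Xs^{+})$; the same relation serves both rows, since they involve the same three Jacobians in reverse order. All four correspondences are defined by Galois-stable incidence relations (membership in a pair, antipodality in a necklace), so the induced morphisms of Jacobians are defined over $\QQ$.

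First I would verify the complex conditions $\varphi\circ\psi=0$ and $\mu\circ\lambda=0$ by a direct count on divisors. For a point $(E,A)$ of $X_0$ one has $\varphi\psi(E,A)=\sum_{B\ne A}\,\sum_{\mathfrak v:\,A\antip B\in\mathfrak v}(E,\mathfrak v)$; since every cyclic subgroup of order $p$ occurs as a pearl of each necklace and has there a unique antipode, for each necklace $\mathfrak v$ there is exactly one admissible $B$, so the double sum collapses to $\sum_{\mathfrak v}(E,\mathfrak v)$, the full fibre of $\jn^{+}\colon\Xn^{+}\to X(1)$ over $j(E)$. As all fibres of this map are linearly equivalent, this is a fixed divisor class and so induces the zero map on $\Jac(\Xn^{+})$. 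The identity $\mu\circ\lambda=0$ is entirely analogous, using that each pearl lies in a unique antipodal pair of $\mathfrak v$.

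The heart of the argument is to identify two of the composites with honest isogenies. A straightforward count gives $\mu\circ\psi(E,A)=(p-1)(E,A)+\sum_{\text{all }C}(E,C)$; the full-fibre term $\sum_{\text{all }C}(E,C)$ is again a fixed divisor class and vanishes on the Jacobian, so $\mu\circ\psi=(p-1)\,\operatorname{id}$ on $\Jac(X_0)$. In particular $\psi$ has finite kernel and $\mu$ has finite cokernel. For the other pair I would compute $\varphi\circ\lambda(E,\mathfrak v)=\sum_{\mathfrak w}\langle\mathfrak v,\mathfrak w\rangle\,(E,\mathfrak w)$, so that $\varphi\circ\lambda$ is exactly the endomorphism of $\Jac(\Xn^{+})$ induced by the Gram matrix of the antipodal pairing $\langle\cdot,\cdot\rangle$ of Section~\ref{pairing_subsec}. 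By Proposition~\ref{pairing_prop} this pairing is positive and non-degenerate, hence its Gram matrix is invertible over $\QQ$ and $\varphi\circ\lambda$ is an isogeny. Consequently $\lambda$ has finite kernel and $\varphi$ has finite cokernel.

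Assembling these, both rows become, after $\otimes\,\QQ$, complexes of vector spaces with injective left map, surjective right map and matching dimensions, and are therefore exact; the cohomology groups are thus finite. The main obstacle is the non-degeneracy of $\langle\cdot,\cdot\rangle$ feeding into the invertibility of $\varphi\circ\lambda$: the positivity in Proposition~\ref{pairing_prop} is immediate from Lemma~\ref{antip_pairing_lem}, but full non-degeneracy is the genuinely decisive input and is exactly the point deferred to Section~\ref{pairing_proof_subsec}; this is the translation of Chen's crucial representation-theoretic computation into our language. A cleaner but equivalent route to the same two isogenies is representation-theoretic: one identifies each $\Jac(X_{\mathcal H})\otimes\QQ$ with a Hom-space out of the permutation module $\QQ[\PGL_2(\Fp)/\mathcal H]$ into $\Jac(X(p))\otimes\QQ$ and uses that the latter carries no trivial constituent, so that the combinatorially defined maps are governed by the permutation modules alone and the harmless discrepancy of one trivial summand disappears.
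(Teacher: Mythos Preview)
Your proposal is correct and follows essentially the same route as the paper: verify the complex conditions and $\mu\circ\psi=(p-1)\operatorname{id}$ by direct divisor counts, identify $\varphi\circ\lambda$ with the Gram matrix of the antipodal pairing so that Proposition~\ref{pairing_prop} (whose non-degeneracy is supplied by the representation-theoretic argument of Section~\ref{grth_subsec} via Section~\ref{pairing_proof_subsec}) makes it an isogeny, and finish with the genus relation~\eqref{genus_eq} for the middle cohomology. You have correctly flagged that the non-degeneracy of $\langle\cdot,\cdot\rangle$ is the decisive input and that its proof is exactly the representation-theoretic step, so there is no circularity.
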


 We could also reformulate the theorem by saying that
 \begin{equation*}
   \xymatrix@1@C+1cm{
     \Jac\bigl(X_0\bigr)\oplus \Jac\bigl(\Xn^{+}\bigr)\
     \ar@<2pt>^(0.6){\psi +\lambda}[r] &
     \ar@<2pt>^(0.4){\mu\oplus\varphi}[l]\
     \Jac\bigl(\Xs^{+}\bigr)
   }
 \end{equation*}
 are isogenies defined over $\QQ$; however they are not dual to each other.


\subsection{The easier part of the proof}

 \begin{lem}
   The two sequences in Theorem~\ref{chen_thm} are complexes and $\mu\circ\psi = [p-1]$ on the Jacobian of $X_0$.
 \end{lem}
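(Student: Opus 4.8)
The plan is to reduce all three assertions to a single principle: each composite, evaluated on a divisor, differs from a scalar multiple of the identity only by a \emph{full-fibre} divisor that is pulled back from $X(1)\cong\PP^1$ along a forgetful map, and such a divisor is trivial in the relevant Jacobian because $\Jac\bigl(X(1)\bigr)=0$. Concretely, I would first record the elementary observation that if $D=\sum_i n_i(E_i,\cdot)$ is a divisor on any of our curves, then the operation sending $D$ to $\sum_i n_i F_{j(E_i)}$, where $F_{j(E_i)}$ denotes the entire fibre over $j(E_i)$ under the map to the $j$-line, is the pullback of $\sum_i n_i\,(j(E_i))$ from $X(1)$; as soon as $\deg D=0$ this latter divisor is principal on $\PP^1$, so its pullback dies in the Jacobian.

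For the identity $\mu\circ\psi=[p-1]$ I would compute on a single point $x=(E,A)$ of $X_0$. Since $\psi(x)=\sum_{B\neq A}(E,\{A,B\})$ and $\mu(E,\{A,B\})=(E,A)+(E,B)$, summing over the $p$ subgroups $B\neq A$ gives $p\,(E,A)+\sum_{B\neq A}(E,B)=(p-1)(E,A)+\sum_{\text{all }C}(E,C)$, where the last sum runs over all $p+1$ cyclic subgroups of order $p$ and is exactly the full fibre over $j(E)$. On degree-zero divisors the full-fibre term vanishes in $\Jac(X_0)$ by the observation above, leaving precisely $[p-1]$.

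The two complex conditions follow the same pattern, the only new ingredient being combinatorial. For $\varphi\circ\psi=0$ I would use that every cyclic subgroup is a pearl of every necklace and that, given a necklace $\mathfrak v$ and a pearl $A$, the pearl antipodal to $A$ in $\mathfrak v$ is unique; hence in $\varphi(\psi(E,A))=\sum_{B\neq A}\sum_{\mathfrak v:\,A\antip B\in\mathfrak v}(E,\mathfrak v)$ each necklace $\mathfrak v$ occurs exactly once (its unique $A$-antipode supplies the single term), so $\varphi(\psi(E,A))=\sum_{\text{all }\mathfrak v}(E,\mathfrak v)$ is the full fibre of $\jn^+$ over $j(E)$, independent of $A$. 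For $\mu\circ\lambda=0$ I would instead use that the antipodal pairs of $\mathfrak v$ partition its $p+1$ pearls into $(p+1)/2$ pairs, so that $\mu(\lambda(E,\mathfrak v))=\sum_{A\antip B\in\mathfrak v}\bigl[(E,A)+(E,B)\bigr]=\sum_{\text{all }C}(E,C)$ is again the full fibre over $j(E)$, independent of $\mathfrak v$. In both cases the full-fibre term is a pullback from $\PP^1$ and so dies on the Jacobian.

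The main obstacle is not the combinatorics but making precise the sense in which these correspondences induce the asserted homomorphisms, and in which ``full fibre'' equals a genuine pullback from $X(1)$: over the elliptic points, over the cusps, and wherever the forgetful map ramifies, the naive fibre count need not coincide with the scheme-theoretic pullback. I would circumvent this by arguing at the level of correspondences rather than individual fibres: the composite correspondences $\varphi\circ\psi$, $\mu\circ\lambda$, and $\mu\circ\psi-[p-1]$ agree with their full-fibre counterparts on a dense open subset of $X(1)$ (generic $j$-invariant, no extra automorphisms, away from the cusps), and two correspondences that agree on a dense open set induce the same map on Jacobians. Those full-fibre correspondences factor through $X(1)\cong\PP^1$, whose Jacobian is trivial, so they induce the zero homomorphism irrespective of the behaviour over the finitely many special points, which completes the reduction.
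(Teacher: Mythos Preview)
Your proposal is correct and follows essentially the same approach as the paper: compute each composite on a single point, recognise the result as a scalar multiple of the identity plus the full fibre $j^*(E)$ over the $j$-line, and kill that fibre term using $\Jac(X(1))=0$. The paper carries out the identical combinatorial bookkeeping without addressing the special-fibre issue you raise; your correspondence argument on a dense open is a reasonable way to make rigorous what the paper leaves implicit.
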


 \begin{proof}
   Let $x=(E,A)$ be a point in $X_0$. Then
   \begin{equation*}
     \varphi\circ \psi (x) = \varphi\Bigl(\sum_{B\neq A} \bigl(E,\{A,B\}\bigr)\Bigr) = \sum_{B\neq A}\  \sum_{\mathfrak v \ni A \antip B} (E,\mathfrak v)
   \end{equation*}
   where the last sum runs over all necklaces $\mathfrak v$ in which $A$ and $B$ are antipodal. Now $A$ will appear in each necklace once and for each necklace there is a unique $B$ which is antipodal to $A$ in $\mathfrak v$. Hence $\varphi\circ \psi (x)$ is equal to the sum over all possible necklaces of $E$. But
   \begin{equation*}
      \varphi\circ \psi (x) = \sum_{\mathfrak v} (E,\mathfrak v) = \jn^{*}(E)
   \end{equation*}
   is the pullback of a divisor $(E)$ on $X(1)$ by the natural projection $\jn^{+}\colon \Xn^{+}\to X(1)$. Since $X(1)\cong \PP^1$ has trivial Jacobian, we find $\varphi\circ\psi=0$ on the Jacobians. This proof was already noted by Merel on page~189 in~\cite{merel}.

   Next, for a point $z=(E,\mathfrak v)$ in $\Xn^{+}$, we have
   \begin{equation*}
     \mu\circ\lambda(z) = \mu\Bigl( \tfrac{1}{2} \sum_A \bigl(E,\{A,B\}\bigr) \Bigr)= \frac{1}{2} \sum_A \Bigl( (E,A) + (E,B)\Bigr) =   \sum_A (E,A) =   j_0^{*}(E)
   \end{equation*}
   where $B$ in the sums denotes the unique pearl which is antipodal to $A$ in $\mathfrak v$ and where $j_0\colon X_0\to X(1)$. Hence $\mu\circ\lambda=0$ on the Jacobians.

   Finally, we obtain
   \begin{align*}
     \mu\circ\psi(x) &= \mu\Bigl(\sum_{B\neq A} \bigl(E,\{A,B\}\bigr) \Bigr) = \sum_{B\neq A} \Bigl( (E,A) + ( E,B)\Bigr) \\
     &= (p-1)\cdot (E,A) + \sum_{B} (E,B) = (p-1)\cdot x + j_0^{*}(E)
   \end{align*}
   and hence $\mu\circ\psi=[p-1]$ on the Jacobian of $X_0$.
 \end{proof}

 \begin{cor}
   The kernel $\ker\psi\subset \Jac(X_0)[p-1]$ and the cokernel $\coker(\mu)=0$ are finite.
 \end{cor}


\subsection{Making use of antipodal pearls}

 We deduce from the earlier Lemma~\ref{antip_pairing_lem} the following result:

 \begin{cor}\label{laphi_cor}
   For every $\bigl(E,\{A,B\}\bigr)\in \Xs^+$, we have
   \begin{equation*}
     \lambda\circ\varphi\,\bigl(E,\{A,B\}\bigr) = \frac{p-1}{2}\cdot \bigl(E, \{A,B\}\bigr) + \sum_{\substack{\{C,D\}\text{ with}\\ [A,B;C,D] \not\in\square}} \bigl(E, \{C, D\}\bigr)
   \end{equation*}
   with the sum running over all $\{C,D\}$ disjoint from $\{A,B\}$ such that the cross-ratio $[A,B;C,D]$ is a non-square in $\Fp^{\times}$.
 \end{cor}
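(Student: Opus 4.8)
The plan is to unwind the two correspondences $\varphi$ and $\lambda$ on the level of divisors and then apply Lemma~\ref{antip_pairing_lem} to control the multiplicities. Starting from a point $(E,\{A,B\})\in\Xs^+$, the definition of $\varphi$ gives
\begin{equation*}
  \varphi\bigl(E,\{A,B\}\bigr) = \sum_{\mathfrak v\ni A\antip B} (E,\mathfrak v),
\end{equation*}
the sum running over all necklaces in which $A$ and $B$ are antipodal. Applying $\lambda$ and using that $\lambda(E,\mathfrak v)$ is the sum of $(E,\{C,D\})$ over all antipodal pairs $C\antip D$ in $\mathfrak v$, I would write $\lambda\circ\varphi\,(E,\{A,B\})$ as a double sum over such necklaces $\mathfrak v$ and over antipodal pairs within each $\mathfrak v$. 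The coefficient of a given pair $\{C,D\}$ in the result is then exactly the number of necklaces $\mathfrak v$ in which both $A\antip B$ and $C\antip D$ hold.

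The key step is to read off these coefficients from Lemma~\ref{antip_pairing_lem}. I would split the contributions according to how $\{C,D\}$ meets $\{A,B\}$. The diagonal term $\{C,D\}=\{A,B\}$ contributes with coefficient equal to the number of necklaces in which $A\antip B$, which is $\tfrac{p-1}{2}$ by the lemma; this produces the first summand. For the off-diagonal terms I would argue that when $\{C,D\}$ shares exactly one pearl with $\{A,B\}$, the four subgroups are not all distinct and such a pair cannot be simultaneously antipodal to a disjoint configuration, so these contribute nothing (more precisely, a pearl is antipodal to a unique other pearl in each necklace, ruling out overlaps). For $\{C,D\}$ disjoint from $\{A,B\}$, the four cyclic subgroups $A,B,C,D$ are distinct and the lemma applies verbatim: the count of necklaces with both $A\antip B$ and $C\antip D$ is $1$ when $[A,B;C,D]$ is a non-square in $\Fp^\times$ and $0$ when it is a square. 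This yields the second summand, with the sum restricted to $[A,B;C,D]\not\in\square$.

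The main obstacle, such as it is, is bookkeeping rather than anything deep: I must be careful that the cross-ratio $[A,B;C,D]$ is well-defined as an element of $\Fp^\times$ under the pair-of-unordered-pairs symmetry, i.e. that swapping $A\leftrightarrow B$ together with $C\leftrightarrow D$, or swapping the roles of the two pairs, leaves the square/non-square condition invariant. Since $[A,B;C,D]$ and its companions under these symmetries differ by inversion or by the standard cross-ratio relations, and inversion preserves the property of being a square in $\Fp^\times$, the condition is indeed well-posed on unordered pairs $\{C,D\}$ disjoint from $\{A,B\}$. Having dispatched this, the three cases assemble directly into the stated formula, so the corollary follows immediately from Lemma~\ref{antip_pairing_lem} once the divisor-level expansion is set up.
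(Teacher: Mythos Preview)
Your proposal is correct and follows essentially the same route as the paper: expand $\lambda\circ\varphi$ as a double sum, interpret the coefficient of $\{C,D\}$ as the number of necklaces with both $A\antip B$ and $C\antip D$, and read off the three cases (equal pair, overlapping pair, disjoint pair) from Lemma~\ref{antip_pairing_lem}. Your extra remarks on the well-definedness of the square/non-square condition under the relevant symmetries and on the one-overlap case via uniqueness of the antipodal pearl are sound but not needed beyond what the lemma already provides.
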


 \begin{proof}
   Since
   \begin{equation*}
     \lambda\circ\varphi\, \bigl(E,\{A,B\}\bigr) = \sum_{\substack{\mathfrak v\text{ with}\\ A\antip B\in \mathfrak v}}\ \sum_{\substack{\{C,D\}\text{ with}\\C\antip D\in \mathfrak v}} \bigl(E, \{C, D\}\bigr)
   \end{equation*}
   we are asked to count how many necklaces have both $\{A,B\}$ and $\{C,D\}$ as antipodal pairs in common. If the four pearls are distinct, Lemma~\ref{antip_pairing_lem} gives the answer. If $A=B$, but $C\neq D$, then there are no such $\mathfrak v$ and if $\{A,B\}=\{C,D\}$, then we have to count how many necklaces have $A\antip B\in \mathfrak v$, this  is $\tfrac{p-1}{2}$ by Lemma~\ref{antip_pairing_lem} again.
 \end{proof}

 We now define yet another map $\alpha\colon \Jac\bigl(\Xs^+\bigr) \to \Jac\bigl(\Xs^+\bigr)$. For a point $y=(E,\{A,B\})$, we define $\alpha\bigl(E,\{A,B\}\bigr)$ to be the sum $\sum \{C,D\}$ running over all unordered pairs $\{C,D\}$ such that $[A,B;C,D]=-1$.

 \begin{lem}\label{alphaalpha_lem}
   We have
   \begin{equation*}
     \alpha\circ\alpha\, \bigl( E,\{A,B\} \bigr) = \frac{p-1}{2}\cdot \bigl(E, \{A, B\}\bigr) + \sum_{\substack{\{C,D\}\text{ with}\\ {} [A,B;C,D]\in\square}} \bigl(E, \{C, D\}\bigr)
   \end{equation*}
   where the second sum runs over all unordered pairs $\{C,D\}$ such that the cross-ratio $[A,B;C,D]$ is a square in $\Fp^{\times}$.
 \end{lem}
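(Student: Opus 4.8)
The plan is to compute the coefficient of each basis element $(E,\{P,Q\})$ in $\alpha\circ\alpha\,(E,\{A,B\})$ directly. Unwinding the two applications of $\alpha$, this coefficient equals the number of intermediate unordered pairs $\{C,D\}$ satisfying simultaneously $[A,B;C,D]=-1$ and $[C,D;P,Q]=-1$; that is, the number of pairs $\{C,D\}$ harmonic to both $\{A,B\}$ and $\{P,Q\}$. Since the cross-ratio is $\PGL\bigl(E[p]\bigr)$-invariant, I would fix a basis of $E[p]$ identifying $\PP\bigl(E[p]\bigr)$ with $\PP^1(\Fp)$ and place $A=(1:0)=\infty$ and $B=(0:1)=0$ in the affine coordinate $x$ on $\PP^1(\Fp)$. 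The whole problem then becomes a finite count on $\PP^1(\Fp)$.

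The key geometric step is to describe the two harmonic conditions explicitly. With $A=\infty$ and $B=0$, the cross-ratio formula gives $[A,B;C,D]=-1$ precisely when $\{C,D\}=\{c,-c\}$ for some $c\in\Fp^{\times}$; as $c$ runs through $\Fp^{\times}$ each such pair is obtained twice, so there are exactly $(p-1)/2$ of them, all disjoint from $\{A,B\}$. For the second condition I would use that the involution of $\PP^1(\Fp)$ fixing $\{c,-c\}$ is $x\mapsto c^2/x$, and that $[C,D;P,Q]=-1$ is equivalent to this involution interchanging $P$ and $Q$. Writing $P=p_0$ and $Q=q_0$ in the affine coordinate, this translates into the single equation $p_0\,q_0=c^2$, which one also checks directly from the cross-ratio formula using that $2$ is invertible.

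It then remains to count the solutions $\{c,-c\}$ of $c^2=p_0q_0$ according to the position of $\{P,Q\}$. If $\{P,Q\}=\{A,B\}$, the symmetry $[A,B;C,D]=[C,D;A,B]$ shows that every one of the $(p-1)/2$ intermediate pairs qualifies, giving the diagonal coefficient $\tfrac{p-1}{2}$. If $\{P,Q\}$ is disjoint from $\{A,B\}$, then $p_0,q_0\in\Fp^{\times}$ and the equation $c^2=p_0q_0$ is solvable exactly when $p_0q_0$ is a nonzero square, i.e.\ when $[A,B;P,Q]=q_0/p_0$ is a square in $\Fp^{\times}$; the two roots $\pm c$ then form a single unordered pair, so the coefficient is $1$ in the square case and $0$ in the non-square case, and one checks the solution is automatically disjoint from $\{P,Q\}$. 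Finally, if $\{P,Q\}$ meets $\{A,B\}$ in exactly one point, the involution $x\mapsto c^2/x$ would have to send that common point (say $\infty$) to the other point of $\{P,Q\}$, forcing the latter to be $0$, which is excluded; hence the coefficient is $0$. Collecting the three cases yields exactly the asserted formula.

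I expect the main obstacle to be bookkeeping rather than conceptual difficulty: one must keep the \emph{intermediate} pairs $\{C,D\}$ cleanly separated from the \emph{target} pairs $\{P,Q\}$, verify that the squareness condition is genuinely well defined on unordered pairs (it is invariant under $x\mapsto 1/x$, hence under swapping either pair), and treat the degenerate configurations where pairs share a point so as to confirm they contribute $0$ and thus do not appear on the right-hand side. As a consistency check, summing all coefficients gives $\bigl(\tfrac{p-1}{2}\bigr)^2$ on both sides, which reassures that no configuration has been double-counted or missed.
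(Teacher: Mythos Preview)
Your proposal is correct and follows essentially the same approach as the paper: both compute, for each target pair, the number of intermediate harmonic pairs by normalising $A=(1:0)$, $B=(0:1)$, reducing the two harmonic conditions to $x=-y$ and $x^2$ equal to the cross-ratio of the target pair with $\{A,B\}$, and then splitting into the cases where the target pair equals, meets, or is disjoint from $\{A,B\}$. Your packaging via the involution $x\mapsto c^2/x$ is a pleasant rephrasing of the same computation but not a genuinely different argument.
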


 \begin{proof}
   By definition, we have
   \begin{equation*}
     \alpha\circ\alpha \, \bigl( E,\{A,B\} \bigr) =
     \sum_{ \substack{ \{X,Y\}\text{ with}\\ {} [A,B;X,Y] = -1}} \
     \sum_{ \substack{ \{C,D\}\text{ with}\\ {} [X,Y;C,D] = -1}} \{C,D\}.
   \end{equation*}
   Given $\{C,D\}$, we wish to determine how many $\{X,Y\}$ exist with $[A,B;X,Y]=[X,Y;C,D]=-1$. Assume first that $A$, $B$, $C$, $D$ are all distinct. It follows that $X$ and $Y$ are distinct from any of the four. Then we choose a basis, identifying $\PP(E[p])$ with $\PP^1(\Fp)$, such that $A=\infty$, $B=0$ and $C=1$. Through this identification, we write $D=d$, $X=x$ and $Y=y$. The two equations give
   \begin{align*}
     -1 &= [A,B;X,Y] = x/y \\
     -1 &= [X,Y;C,D] = \frac{x-1}{y-1}\cdot \frac{y-d}{x-d}.
   \end{align*}
   They simplify to $x=-y$ and $x^2=d=[A,B;C,D]$. Hence if $[A,B;C,D]$ is a non-square in $\Fp$, then there are no $\{X,Y\}$ and if it is a square then there is exactly one pair $\{X,Y\}$.

   Finally, suppose they are not all distinct, say $C=A$. If $D\neq B$, then there can not be any $\{X,Y\}$. If $\{A,B\}=\{C,D\}$, then all pairs $\{X,Y\}$ with $[A,B;X,Y]=-1$ will contribute to the sum, and there are $\tfrac{p-1}{2}$ such pairs.
 \end{proof}

 \begin{prop}
   Let $\js\colon \Xs^+\to X(1)$ be the natural projection. The relation
   \begin{equation}\label{chen86_eq}
     \Bigl(\lambda\circ \varphi + \alpha\circ\alpha + \psi\circ\mu\Bigr) \bigl(E,\{A,B\}\bigr) = p\cdot \bigl(E, \{A,B\}\bigr) + \js^*(E)
   \end{equation}
   holds for all $\bigl(E,\{A,B\}\bigr)\in \Xs^+$.
 \end{prop}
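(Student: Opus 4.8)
The plan is to evaluate each of the three compositions on the divisor $(E,\{A,B\})$ separately and then add them, watching carefully for cancellation. Two of the three are already in hand. Corollary~\ref{laphi_cor} gives
\begin{equation*}
  \lambda\circ\varphi\,\bigl(E,\{A,B\}\bigr) = \frac{p-1}{2}\,\bigl(E,\{A,B\}\bigr) + \sum_{\substack{\{C,D\}\\ [A,B;C,D]\not\in\square}} \bigl(E,\{C,D\}\bigr),
\end{equation*}
and Lemma~\ref{alphaalpha_lem} gives the matching expression for $\alpha\circ\alpha$, with the same leading coefficient $\tfrac{p-1}{2}$ but the sum now running over the \emph{squares}. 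The first observation I would record is that $[A,B;C,D]$ lands in $\Fp^{\times}$ precisely when $\{C,D\}$ is disjoint from $\{A,B\}$, and that any such value is either a square or a non-square. Hence adding the two displayed formulas collapses the two partial sums into a single sum over all $\{C,D\}$ disjoint from $\{A,B\}$, while the coefficient of $(E,\{A,B\})$ becomes $p-1$.

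Next I would compute $\psi\circ\mu$ directly from the definitions. Since $\mu(E,\{A,B\}) = (E,A)+(E,B)$ and $\psi(E,A)=\sum_{C\neq A}(E,\{A,C\})$, and symmetrically for $B$, isolating in each of the two resulting sums the single term equal to $(E,\{A,B\})$ yields
\begin{equation*}
  \psi\circ\mu\,\bigl(E,\{A,B\}\bigr) = 2\,\bigl(E,\{A,B\}\bigr) + \sum_{C\neq A,B}\bigl(E,\{A,C\}\bigr) + \sum_{C\neq A,B}\bigl(E,\{B,C\}\bigr).
\end{equation*}
The two sums here run exactly over the pairs meeting $\{A,B\}$ in a single pearl.

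The final step rewrites the disjoint-pair sum via the projection $\js$. Partitioning the fibre of $\js$ over $(E)$ into $\{A,B\}$ itself, the pairs sharing one pearl with $\{A,B\}$, and the pairs disjoint from $\{A,B\}$, I obtain
\begin{equation*}
  \sum_{\substack{\{C,D\}\\ \text{disjoint from }\{A,B\}}} \bigl(E,\{C,D\}\bigr) = \js^*(E) - \bigl(E,\{A,B\}\bigr) - \sum_{C\neq A,B}\bigl(E,\{A,C\}\bigr) - \sum_{C\neq A,B}\bigl(E,\{B,C\}\bigr).
\end{equation*}
Substituting this into the sum of the three compositions, the two mixed sums cancel against those appearing in $\psi\circ\mu$, and the coefficient of $(E,\{A,B\})$ totals $(p-1)-1+2 = p$, leaving exactly $p\,(E,\{A,B\}) + \js^*(E)$, which is \eqref{chen86_eq}.

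I do not expect a genuine obstacle: once Corollary~\ref{laphi_cor} and Lemma~\ref{alphaalpha_lem} are available, the argument is pure bookkeeping on divisors. The one point that deserves care is the initial observation that squares and non-squares together exhaust precisely the pairs disjoint from $\{A,B\}$; this is exactly what lets the two cross-ratio sums recombine into the clean disjoint-pair sum that $\js^*(E)$ then accounts for, and it is also what makes the mixed sums cancel so that no stray terms survive.
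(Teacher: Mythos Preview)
Your proof is correct and follows exactly the same approach as the paper: combine Corollary~\ref{laphi_cor}, Lemma~\ref{alphaalpha_lem}, and the direct computation of $\psi\circ\mu$, then do the bookkeeping. Your write-up is simply more explicit than the paper's, which only says ``counting how often $\bigl(E,\{A,B\}\bigr)$ appears on both sides'' and leaves the partition of $\js^*(E)$ and the cancellation of the mixed sums to the reader.
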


 \begin{proof}
   This is just the combination of Corollary~\ref{laphi_cor}, Lemma~\ref{alphaalpha_lem}, the equality
   \begin{equation*}
     \psi\circ\mu\bigl(E,\{A,B\}  \bigr) = 2\cdot \bigl(E,\{A,B\}\bigr) + \sum_{C\neq A,B} \Bigr( \bigl(E,\{A,C\}\bigr) + \bigl(E,\{B,C\}\bigr)\Bigl),
   \end{equation*}
   and counting how often $\bigl(E,\{A,B\}\bigr)$ appears on both sides.
 \end{proof}


\subsection{Representation theoretic argument}\label{grth_subsec}

 The main argument in~\cite{edixhoven,desmit_edixhoven} that an isogeny must exist between the Jacobians, and even some information about its degree, is directly deduced from the Brauer relation between certain permutation representation. Denote by $\mathcal{B}$, $\mathcal S$, and $\mathcal N$ a Borel subgroup, a normaliser of a split Cartan subgroup and a normaliser of a non-split Cartan subgroup of a group $\mathcal G$ isomorphic to $\PGL_2(\Fp)$, respectively. Then (see~\cite{edixhoven,desmit_edixhoven})
 \begin{equation*}
   \QQ[\mathcal G/\mathcal S] \oplus \QQ[\mathcal G/\mathcal G] \cong \QQ[\mathcal G/\mathcal N]\oplus \QQ[\mathcal G/\mathcal B].
 \end{equation*}

 We fix an elliptic curve $E$ over an algebraically closed field of characteristic different from $p$. Write $\mathcal G=\PGL\bigl(E[p]\bigr)$. We consider the $\QQ[\mathcal G]$-modules $U=\bigoplus_{\mathfrak v}\QQ\, (E,\mathfrak v)$, which is isomorphic to $\QQ[\mathcal G/\mathcal N]$, and $V = \bigoplus_{\{A,B\}} \QQ \,\bigl(E, \{A,B\}\bigr)\cong\QQ[\mathcal G/\mathcal S]$. The equation~\eqref{chen86_eq} is a relation between $\QQ[\mathcal G]$-endomorphisms of $V$: $\lambda\circ\varphi+\alpha\circ\alpha+\psi\circ\mu=[p]+j$, where we still denote by $\psi,\varphi,\alpha,\lambda,\mu$ the morphisms induced on $\QQ[\mathcal G]$-modules and where $j\colon V \to V$ sends $\bigl(E,\{A,B\}\bigr)$ to the sum over all $\bigl(E,\{C,D\}\bigr)$.

 From the fact that the middle line (for $T'$) in table~2.2 in~\cite{edixhoven} only contains $0$ and $1$, we see that $V\otimes\CC$ decomposes into a sum of \textit{distinct} irreducible $\CC[\mathcal G]$-modules.  We denote by $\chi_W$ the character and $e_W = 1/\vert \mathcal{G} \vert \cdot  \sum_{g\in \mathcal{G}} \chi_W(g)\, g^{-1}$ the  idempotent associated to such an irreducible $\CC[\mathcal G]$-submodule $W$ of $V$. For $f$  a $\QQ[\mathcal G]$-endomorphism of $V$, Schur's Lemma implies that $f|_W$  is the multiplication by a scalar $c_W(f)\in\CC$. Let $K$ be the cyclotomic field $\QQ(\zeta_{p-1},\zeta_{p+1})$. A look at the character table (for instance table~2.1 in~\cite{edixhoven}) of $\mathcal G \cong \PGL_2(\Fp)$ shows that all values of characters are contained in $K$.  Since $c_W(f)=1/\dim(W)\cdot \tr(e_W\circ f)$, we see that $c_W(f)$ belongs to $\QQ(\chi_W) \subset K$.

 We will now consider these scaling factors for the $\QQ[\mathcal G]$-endomorphisms in equation~\eqref{chen86_eq}. Let $W$ be an irreducible complex representation which appears in the decomposition of $V\otimes \CC$ but not in the image of $\psi\otimes \CC\colon \bigoplus_A\CC\, (E, A)\to V\otimes \CC$. Then $c_W(\psi\circ \mu)=0$. By the Brauer relation, since $\bigoplus_A\QQ\, (E, A)\cong\QQ[\mathcal G/\mathcal B]$, the representation $W$ also appears in the decomposition of $U\otimes \CC$. Then similarly $c_W(j)=0$.  Hence the equation~\eqref{chen86_eq} gives
 \begin{equation*}
   c_W(\lambda\circ\varphi) = c_W([p]) - c_W(\alpha\circ\alpha) = p - c_W(\alpha)^2.
 \end{equation*}
 However, since $c_W(\alpha)\in K$, it can not be equal to $\pm\sqrt{p}$. This shows that $c_W(\lambda\circ \varphi)\neq 0$ for all irreducible $W$ which do not appear in the image of $\psi$.

 Therefore the map $\varphi$ is a $\mathcal G$-isomorphism from $V/\im \psi$ into the non-trivial part of $U$. Moreover, the map $\varphi\circ \lambda \colon U \to U$ has the same scalar factors $c_W(\varphi\circ \lambda) = c_W(\lambda\circ\varphi)\neq 0$ and on the trivial part it is the scalar multiplication by $(p^2-1)/4\neq 0$. It follows that $\varphi\circ\lambda$ is a $\mathcal G$-automorphism of $U$.


\subsection{End of proof of Proposition~\ref{pairing_prop}}\label{pairing_proof_subsec}

 We compute
 \begin{equation*}
   \varphi\circ\lambda\,(\mathfrak v) =
   \varphi\Bigl( \sum_{\substack{ \{A,B\}\text{ with}\\ A\antip B\in \mathfrak v}} \{A,B\}\Bigr)
   = \sum_{\substack{\{A,B\}\text{ with}\\ A\antip B\in \mathfrak v}}\ \sum_{\substack{\mathfrak w\text{ with}\\ A\antip B\in \mathfrak w}} \mathfrak w
   = \sum_{\mathfrak w} \langle \mathfrak v,\mathfrak w\rangle \cdot \mathfrak w.
 \end{equation*}
 We deduce from the above representation theoretic input that the pairing in Section~\ref{pairing_subsec} is non-degenerate.

 This concludes the proof of Proposition~\ref{pairing_prop}. It is to note that the non-degeneracy of the pairing is equivalent to the difficult part of the proof of Chen's isogeny in Theorem~\ref{chen_thm}. It would be nice to find a purely combinatorial proof of the non-degeneracy of this pairing.


\subsection{End of proof of Theorem~\ref{chen_thm}}

 In Section~\ref{grth_subsec}, we have shown that the map $\varphi\circ \lambda \colon \bigoplus_{\mathfrak v} \ZZ\,(E,\mathfrak v)\to\bigoplus_{\mathfrak v} \ZZ\,(E,\mathfrak v)$ has finite kernel and cokernel. In other words the map
 \begin{equation*}
   \varphi\circ\lambda \colon \divi(\Xn^+) \to  \divi(\Xn^+)
 \end{equation*}
 has finite kernel and cokernel in each fibre. Since the size of them is independent of the fibre, the above map has kernel and cokernel of finite exponent. Now consider the induced map
 \begin{equation*}
   \varphi\circ\lambda \colon \Jac(\Xn^+) \to  \Jac(\Xn^+)
 \end{equation*}
 on the Jacobian. If $[D]$ is a divisor class in $\Jac(\Xn^+)$, then there is a multiple $[mD]$ which is in the image of $\varphi\circ\lambda$. Therefore the map $\varphi\circ\lambda$ has finite cokernel on the Jacobians. Comparing the dimensions it follows that it has finite kernel, too. This implies that $\varphi$ has finite cokernel and $\lambda$ has finite kernel in the sequences in Theorem~\ref{chen_thm}.

 To conclude we have to verify that the sequences have finite cohomology in the middle term. This can be deduced from counting the dimension together with all the known parts of the theorem: We know from~\eqref{genus_eq} in Section~\ref{genus_subsec} that the dimension of $\Jac(\Xs^+)$ is equal to the sum of the dimensions of $\Jac(X_0)$ and $\Jac(\Xn^{+})$. Since $\varphi$ has finite cokernel, its kernel has now the same dimension as $\Jac(X_0)$, which is also the dimension of the image of $\psi$. Because the sequence is a complex, we have $\im \psi\subset \ker\varphi$ and the quotient is finite because they have the same dimension. The argument for the second sequence is similar. This concludes the proof of Theorem~\ref{chen_thm}.


\subsection{Relation to Chen's computations}\label{chen_grth_subsec}

 In this subsection, we relate our proof to the previous proof in~\cite{chen2} by  establishing a translation. The first difference is that we work with $\PGL_2(\Fp)$ rather than with $\GL_2(\Fp)$, but that does not make any real difference.

 Fix an elliptic curve $E$ over an algebraically closed field of characteristic different from $p$. Let us fix two distinct subgroups $A_0$ and $B_0$ in $E$. Further we choose a necklace $\mathfrak v_0$ in which $A_0$ is antipodal to $B_0$. Let $\mathcal{B}$ be the stabiliser of $A_0$ in $\mathcal G= \PGL\bigl(E[p]\bigr)$, which is a Borel subgroup, let $\mathcal S$ be the stabiliser of $\{A_0,B_0\}$, which is the normaliser of a split Cartan subgroup, and let $\mathcal N$ be the stabiliser of $\mathfrak v_0$, which is the normaliser of a non-split Cartan. Then we define  three $\mathcal G$-isomorphisms
 \begin{equation*}
   \iota_0 : \QQ\bigl[\mathcal{G}/\mathcal{B}\bigr]  \longrightarrow \bigoplus_A \QQ\,A\, ,
       \qquad
     \ios : \QQ\bigl[\mathcal G/\mathcal S\bigr]   \longrightarrow   \bigoplus_{\{A,B\}} \QQ \, \{A,B\} \, ,\qquad
     \ion :  \QQ\bigl[\mathcal G/\mathcal N\bigr]
\longrightarrow   \bigoplus_{\mathfrak v} \QQ\,\mathfrak v
 \end{equation*}

 by $\iota_0(\mathcal B)=A_0$, $\ios(\mathcal S ) = \{A_0,B_0\}$ and $\ion(\mathcal N)= \mathfrak v_0$. The importance of the exact choices here is that $\mathcal N\cap \mathcal S$ contains $4$ elements. Had we taken ``adjacent'' rather than ``antipodal'' pearls in the necklace, we would only have $2$ elements. Compare with remarque~3 in~\cite{desmit_edixhoven}.

 Recall from~\cite{chen2} that for each double coset $\mathcal Hg\mathcal H'$ for some subgroups $\mathcal H$ and $\mathcal H'$ of $\mathcal G$ and $g\in \mathcal G$, there is a $\mathcal G$-morphism $\Theta(\mathcal Hg\mathcal H')\colon \QQ[\mathcal G/\mathcal H]\to\QQ[\mathcal G/\mathcal H']$ sending $\mathcal H$ to the sum $\sum_{s\in\Omega} s\mathcal H'$ such that $\bigcup_{s\in\Omega} s\mathcal H' = \mathcal Hg \mathcal H'$ is a disjoint union.
 \begin{lem}
   We have
   \begin{equation*}
     \psi = \ios\circ\Theta(\mathcal{B} 1 \mathcal{S})\circ\iota_0^{-1}, \quad \mu= \iota_0\circ\Theta(\mathcal{S}1\mathcal{B})\circ\ios^{-1},\quad
     \varphi = \ion\circ\Theta(\mathcal{S}1\mathcal{N})\circ\ios^{-1}, \quad\text{and}\quad \lambda = \ios\circ\Theta(\mathcal{N}1\mathcal{S})\circ \ion^{-1}.
   \end{equation*}
   Further we have $\alpha = \ios\circ\Theta(\mathcal{S} g \mathcal{S})\circ\ios^{-1}$ with $g = [\ma{1}{1}{1}{-1}]$.
 \end{lem}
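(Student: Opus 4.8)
The plan is to verify each of the five claimed identities by unwinding the definition of $\Theta(\mathcal{H}g\mathcal{H}')$ and comparing with the explicit moduli-theoretic definitions of $\psi$, $\mu$, $\varphi$, $\lambda$, and $\alpha$ given earlier. The key observation is that all six $\mathcal{G}$-maps in sight are $\mathcal{G}$-equivariant, so each is determined by where it sends a single generator of the source. For the $\Theta$-maps the generator is the trivial coset $\mathcal{H}$, whose image is $\sum_{s\in\Omega} s\mathcal{H}'$, the coset decomposition of the double coset $\mathcal{H}g\mathcal{H}'$; for our moduli maps the natural generator is the base object $A_0$, $\{A_0,B_0\}$, or $\mathfrak{v}_0$ fixed at the start of this subsection. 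Since the identifications $\iota_0$, $\ios$, $\ion$ are precisely the $\mathcal{G}$-isomorphisms carrying these trivial cosets to the chosen base objects, it suffices in each case to check that the two sides agree on that one distinguished generator.

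Concretely, I would take the identities one at a time. For $\psi = \ios\circ\Theta(\mathcal{B}1\mathcal{S})\circ\iota_0^{-1}$, note that $\iota_0^{-1}(A_0)=\mathcal{B}$, that $\Theta(\mathcal{B}1\mathcal{S})$ sends $\mathcal{B}$ to the sum of those cosets $s\mathcal{S}$ with $s\mathcal{S}\subset \mathcal{B}\mathcal{S}$, and that under $\ios$ these correspond to the pairs $\{A_0,B\}$ with $B\neq A_0$; this matches the definition $\psi(E,A_0)=\sum_{B\neq A_0}(E,\{A_0,B\})$ exactly once one checks that the $\mathcal{B}$-orbit of the coset $\{A_0,B_0\}$ ranges over all such pairs, i.e. that the Borel $\mathcal{B}=\Stab(A_0)$ acts transitively on subgroups distinct from $A_0$. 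The remaining four identities follow the same template: for $\varphi$ one uses that $\mathcal{S}=\Stab\{A_0,B_0\}$ permutes exactly the necklaces in which $A_0\antip B_0$, matching the definition of $\varphi$ via Lemma~\ref{antip_pairing_lem}; for $\lambda$ and $\mu$ one uses the dual double cosets $\Theta(\mathcal{N}1\mathcal{S})$ and $\Theta(\mathcal{S}1\mathcal{B})$ and the fact that $\lambda$ (resp. $\mu$) sums over the antipodal pairs of $\mathfrak{v}_0$ (resp. the two cyclic subgroups in $\{A_0,B_0\}$), which is again a single orbit computation under the stabiliser.

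For the last identity $\alpha = \ios\circ\Theta(\mathcal{S}g\mathcal{S})\circ\ios^{-1}$ with $g=[\ma{1}{1}{1}{-1}]$, the approach is identical but requires identifying which pairs $\{C,D\}$ appear in the double coset $\mathcal{S}g\mathcal{S}$. By definition $\alpha$ sends $\{A_0,B_0\}$ to $\sum\{C,D\}$ over pairs with $[A_0,B_0;C,D]=-1$, so I would compute, in the basis with $A_0=(1:0)$ and $B_0=(0:1)$, the cross-ratio $[A_0,B_0;g(A_0),g(B_0)]$ for the given $g$ and confirm it equals $-1$; then the orbit $\mathcal{S}\cdot g\{A_0,B_0\}$ (equivalently the coset decomposition of $\mathcal{S}g\mathcal{S}$) is exactly the set of pairs at cross-ratio $-1$ from $\{A_0,B_0\}$, using that cross-ratio is the complete $\mathcal{S}$-invariant separating $\mathcal{S}$-orbits of pairs.

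The main obstacle I anticipate is purely bookkeeping rather than conceptual: one must verify in each case that the coset decomposition of the relevant double coset matches the moduli-theoretic sum \emph{with the correct multiplicities}, which amounts to checking the relevant stabiliser acts with the expected orbit sizes and, crucially, that the index computations are consistent with the earlier remark that $\mathcal{N}\cap\mathcal{S}$ has order $4$ (so that the antipodal convention, rather than the adjacent one, is what produces the maps $\varphi$ and $\lambda$ as defined). This intersection size is exactly what guarantees that $\Theta(\mathcal{S}1\mathcal{N})$ hits each relevant necklace once; had $\mathcal{N}\cap\mathcal{S}$ had order $2$, the correspondence would be a different one. Getting these multiplicities right for $\alpha$, where the double coset $\mathcal{S}g\mathcal{S}$ need not be a single $\mathcal{S}$-orbit on the nose, is the step I would check most carefully.
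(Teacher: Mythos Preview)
Your proposal is correct and follows essentially the same approach as the paper: reduce each identity to checking a single generator via $\mathcal{G}$-equivariance, interpret the double-coset decomposition as an orbit of the relevant stabiliser on the moduli objects, and for $\alpha$ verify directly that $[A_0,B_0;gA_0,gB_0]=-1$ and invoke $\mathcal{S}$-invariance of the cross-ratio to identify the orbit. The paper's proof spells out $\psi$ and $\alpha$ in detail and declares the other three ``very similar'', exactly as you outline; your attention to the multiplicity issue and the role of $\lvert\mathcal{N}\cap\mathcal{S}\rvert=4$ is apt and matches the paper's own remark.
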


 \begin{proof}
   We only illustrate the first equality as the proof is very similar for all of the first four equalities. The map $\Theta(\mathcal{B} 1 \mathcal{S})$ sends $\mathcal{B}=\iota_0^{-1}(A_0)$ to the sum of $s\mathcal S$ where $s$ runs over a system $\Omega$ of representatives of $\mathcal{B}/(\mathcal{B}\cap \mathcal{S})$.  The quotient group is the group of elements in $\mathcal G$ fixing $A_0$ modulo the subgroup of elements also fixing $B_0$. So
   \begin{equation*}
     \ios\circ\Theta(\mathcal{B} 1 \mathcal{S})(\mathcal B)=\sum_{s\in\Omega} \{A_0,sB_0\}.
   \end{equation*}
   Since $\mathcal{B}$ acts transitively on  $\PP\bigl(E[p]\bigr)\setminus\{A_0\}$, each $\{A_0,B\}$ with $B\neq A_0$ will appear exactly once in this sum. Hence
   \begin{equation*}
     \ios\circ \Theta(\mathcal{B} 1 \mathcal{S})(\mathcal{B}) = \sum_{B \neq A_0} \{A_0,B\} = \psi(A_0).
   \end{equation*}
   To prove the last equality, note that with $C_0=gA_0$ and $D_0=gB_0,$ we get $[A_0,B_0;C_0,D_0]=-1$ and
   $g\mathcal{S}g^{-1}$ is the stabiliser in $\mathcal G$ of $\{C_0,D_0\}$.
   So the quotient $\mathcal S/(\mathcal S\cap g\mathcal S g^{-1})$ is the group of elements fixing $\{A_0,B_0\}$ modulo elements also fixing $\{C_0,D_0\}$. It follows that
   \begin{equation*}
     \ios\circ\Theta(\mathcal{S}g\mathcal{S})(\mathcal{S})=\sum_{s\in\Omega'}sg\{A_0,B_0\}=\sum_{s\in\Omega'}\{sC_0,sD_0\}
   \end{equation*}
   where $\Omega'$  is a system of representatives of $\mathcal S/(\mathcal S\cap g\mathcal S g^{-1})$. This is exactly the sum of all $\{C,D\}$ with $[A_0,B_0;C,D]=-1$, because the action of $\mathcal S$ on the set of pairs $\{C,D\}$ is transitive and for $s\in\Omega$, we have $[A_0,B_0;sC_0,sD_0]=[sA_0,sB_0;sC_0,sD_0]=[A_0,B_0;C_0,D_0]=-1$.
 \end{proof}

 Now it is clear that equation~\eqref{chen86_eq} is exactly what Chen proves in Proposition~8.6 and Proposition~8.7 in\cite{chen2}. His proof is a computation in double coset operators. He then goes on to give formulae for the values of $c_W(\lambda\circ\varphi)$ in terms of character sums. However, his final argument that they are non-zero can be shortened as we did in Section~\ref{grth_subsec} without making the values more explicit.

 Finally, we wish to point out that Chen also describes the maps using the degeneracy morphisms. See his Theorem~2 in~\cite{chen2}. For instance, let consider the usual degeneracy morphisms $\pi_0:X_\mathcal A\longrightarrow X_0$ defined by  $(E,(A,B,C))\mapsto (E,A)$ and $\pis^+ :X_\mathcal A\longrightarrow \Xs^+$ defined by  $(E,(A,B,C)\mapsto (E,\{A,B\})$,  it is easy to see from our definitions that
 \begin{equation*}
   (p-1)\cdot \psi = (\pis^{+})_* \circ(\pi_0)^*\qquad\text{ and }\qquad
   (p-1)\cdot \mu = (\pi_0)_*\circ(\pis^{+})^*
 \end{equation*}
 hold as maps on divisors. To explain $\varphi$ and $\lambda$, we have to replace $\pin^+$ by another degeneracy map. Let $\varepsilon$ be a non-square in $\Fp$. We define $\piin\colon X_{\mathcal A}\to \Xn^{+}$ by sending $\bigl( E, (A,B,C)\bigr)$ to the following necklace $\mathfrak v$ in $E$. First there exist a unique $D$ distinct from $A$, $B$, and $C$ such that $[A,B;C,D] = \varepsilon$. Then, by Lemma~\ref{antip_pairing_lem} there is a unique $\mathfrak v$ such that $A\antip B \in \mathfrak v$ and $C\antip D \in \mathfrak v$. It is also this lemma which shows that this map is $\PGL\bigl(E[p]\bigr)$-equivariant.
 \begin{lem}
   We have
   \begin{equation*}
     4\cdot \varphi  = (\piin)_*\circ (\pis^{+})^* \qquad\text{ and }\qquad
     4\cdot \lambda = (\pis^{+})_*\circ(\piin)^*.
   \end{equation*}
 \end{lem}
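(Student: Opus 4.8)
The plan is to verify both equalities fibrewise on divisors. I would fix an elliptic curve $E$ over $\bar k$ with $j(E)\notin\{0,1728\}$, so that away from the finitely many elliptic points and cusps the maps $\pis^{+}$ and $\piin$ are unramified; there $(\pis^{+})^*$ and $(\piin)^*$ of a point are the formal sum of its preimages and $(\pis^{+})_*$, $(\piin)_*$ of a point are its image. Since all five maps $\varphi$, $\lambda$, $\pis^{+}$, $\piin$ are defined purely moduli-theoretically and fibrewise, checking the identities on this dense locus determines them as maps on divisors, hence on the Jacobians.

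The central input is a description of the fibres of $\piin$. By its definition together with Lemma~\ref{antip_pairing_lem}, one has $\piin\bigl(E,(A,B,C)\bigr)=(E,\mathfrak v)$ if and only if both $\{A,B\}$ and $\{C,D\}$ are antipodal pairs of $\mathfrak v$, where $D$ is the pearl antipodal to $C$ in $\mathfrak v$, and $[A,B;C,D]=\varepsilon$. To count these I would fix a basis so that $\mathfrak v$ has $A\antip B$ with $A=(1:0)$ and $B=(0:1)$; then, as in the proof of Lemma~\ref{antip_pairing_lem}, the reflection of $\mathfrak v$ is $\bigl[\ma{0}{d}{1}{0}\bigr]$ for the non-square $d$ attached to $\mathfrak v$, the pearl antipodal to $(c:1)$ is $(d/c:1)$, and a one-line cross-ratio computation gives $[A,B;(c:1),(d/c:1)]=d/c^2$. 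The equation $d/c^2=\varepsilon$ then has exactly two solutions $c\in\Fp^\times$, because $d/\varepsilon$ is a square; this is the source of one factor $2$, the other coming from the two orderings of an antipodal pair.

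For the first identity I would compute $(\pis^{+})^*(E,\{A,B\})$ as the sum of the $2(p-1)$ triples $(E,(A,B,C))$ and $(E,(B,A,C))$ with $C\neq A,B$, then apply $(\piin)_*$. Fixing a necklace $\mathfrak v$ with $A\antip B$, the triples in this list landing on $(E,\mathfrak v)$ are exactly those whose $C$ has antipodal pearl $D$ in $\mathfrak v$ with $[A,B;C,D]=\varepsilon$ (for the ordering $(A,B)$, giving $c^2=d/\varepsilon$) or $[B,A;C,D]=\varepsilon$ (for $(B,A)$, giving $c^2=d\varepsilon$); each yields two choices of $C$, for $2+2=4$ in total. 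Hence every $\mathfrak v$ with $A\antip B$ is hit four times, so $(\piin)_*\circ(\pis^{+})^*(E,\{A,B\})=4\sum_{A\antip B\in\mathfrak v}(E,\mathfrak v)=4\,\varphi(E,\{A,B\})$. The second identity is the dual computation: $(\piin)^*(E,\mathfrak v)$ is the sum of its $2(p+1)$ preimage triples, and grouping them by their image $\{A,B\}$ under $\pis^{+}$, each unordered antipodal pair $\{A,B\}$ of $\mathfrak v$ arises from two orderings with two choices of $C$ each, again four in total, giving $(\pis^{+})_*\circ(\piin)^*(E,\mathfrak v)=4\sum_{A\antip B\in\mathfrak v}(E,\{A,B\})=4\,\lambda(E,\mathfrak v)$.

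The difficulty here is bookkeeping rather than depth. The main things to get right are the clean characterisation of the fibres of $\piin$ via Lemma~\ref{antip_pairing_lem}, the separate accounting of the orderings $(A,B)$ and $(B,A)$, and the uniformity of the factor $4=2\times 2$ across all fibres. As a consistency check I would confirm the degrees $\deg\pis^{+}=2(p-1)$ and $\deg\piin=2(p+1)$, which match the global term counts $4\cdot\tfrac{p-1}{2}$ and $4\cdot\tfrac{p+1}{2}$ and certify that nothing has been over- or undercounted; the restriction to the generic locus is what guarantees the push–pull formulas carry no ramification corrections.
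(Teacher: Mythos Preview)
Your proposal is correct and follows essentially the same approach as the paper. The only cosmetic difference is in how the factor $4$ is obtained: the paper first uses the symmetry $[A,B;C,D]=[B,A;D,C]$ to identify the sum over triples $(B,A,D)$ with that over $(A,B,C)$, reducing to a single quadratic equation $da^2=\varepsilon$ with two solutions, whereas you treat the two orderings separately via the pair of equations $c^2=d/\varepsilon$ and $c^2=d\varepsilon$; both routes yield $2+2=4$.
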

 \begin{proof}
   Let $A$ and $B$ be two distinct cyclic subgroups of order $p$ of some elliptic curve $E$. By definition, we have
   \begin{align*}
     (\piin)_*\circ (\pis^{+})^* \bigl( E, \{A,B\}\bigr) &=
      \sum_{C\not\in\{A,B\}} \piin\bigl( E, (A,B,C) \bigr) +  \sum_{D\not\in\{A,B\}}  \piin\bigl( E, (B,A,D)\bigr)\\
      &=2\sum_{X\not\in\{A,B\}} \piin\bigl( E, (A,B,X) \bigr)
   \end{align*}
   since $[A,B;C,D]=[B,A;D,C]$ for all $C,D$. Each necklace in this sum will have $A$ and $B$ as antipodal pearls. Let $\mathfrak v$ be a necklace with $A\antip B\in \mathfrak v$. We wish to determine how often $\mathfrak v$ appears in the above sum, that is to say how many $X\not\in\{A,B\}$ are there such that $\mathfrak v=\piin\bigl( E, (A,B,X) \bigr) $. In other words, we wish to count the $X$ such that $[A,B;X,X']= \varepsilon$ where $X'$ is the antipodal pearl to $X$ in $\mathfrak v$.  We can choose a basis of $E[p]$ such that $A=\infty$, $B=0$ and the subgroups $X\not\in\{A,B\}$ are $X=1/a$ for some $a\in\Fp^\times$. The involution in the stabiliser of $\mathfrak v$ is then represented by a matrix $g=[\ma{0}{d}{1}{0}]$ with $d$ non-square and the antipodal pearl to $X$ in $\mathfrak v$ is $X'=da$. It follows that $[A,B;X,X']=da^2$. Since $\varepsilon$ and $d$ are non-squares, the equation $da^2=\varepsilon$ has two solutions in $\Fp^\times$. Hence there are two pearls $X\not\in\{A,B\}$ such that $\mathfrak v=\piin\bigl( E, (A,B,X) \bigr) $ and consequently
   \begin{equation*}
     (\piin)_*\circ (\pis^{+})^* \bigl( E, \{A,B\}\bigr) = 4\sum_{\substack{\mathfrak v \text{ with}\\A\antip B\in\mathfrak v}} \mathfrak v.
   \end{equation*}
   The second equality follows from an analogous argument.
 \end{proof}


\section{Examples}\label{examples_sec}

 We add some numerical examples for small primes, mainly on the eigenvalues of the pairing in Section~\ref{pairing_subsec}.


\subsection{Necklaces for $p=5$}

 There is a unique conjugacy class $C_{\gamma}$ in $\PGL_2(\FF_5)$. We have $t=1$ and $n=2$. We spell out the 10 necklaces below by giving them as a list of all points in $\PP^1(\FF_5)$:
 \begin{gather*}
   \bigl( 0,1,2,4,\infty,3 \bigr),\
   \bigl( 0,1,3,\infty,2,4 \bigr),\
   \bigl( 0,1,4,2,3,\infty \bigr),\
   \bigl( 0,1,\infty,3,4,2 \bigr),\
   \bigl( 0,2,1,\infty,4,3 \bigr),\\
   \bigl( 0,3,1,2,\infty,4 \bigr),\
   \bigl( 0,3,2,1,4,\infty \bigr),\
   \bigl( 0,2,3,4,1,\infty \bigr),\
   \bigl( 0,2,\infty,1,3,4 \bigr),\
   \bigl( 0,4,1,3,2,\infty \bigr).
 \end{gather*}
 It is now easy to read off the pairing $\langle\cdot,\cdot\rangle$ defined in Section~\ref{pairing_subsec}. Let $\mathfrak v$ be the first necklace in the list. Of course, we have $\langle \mathfrak v,\mathfrak v\rangle = 3$. On the one hand, we have $\langle\mathfrak v,\mathfrak w\rangle = 0$ for $\mathfrak w$ being any of the necklaces from the second to the seventh and, on the other hand, $\langle\mathfrak v,\mathfrak w\rangle = 1$ when $\mathfrak w$ is any of the last three necklaces.

 The resulting matrix $(\langle \mathfrak v, \mathfrak w\rangle)_{\mathfrak v, \mathfrak w}$ is non-singular. Its eigenvalues are $6$, four times $1$, and five times $4$.


\subsection{Necklaces for $p=7$}

 For $p=7$, we have two choices for $\gamma$. We take $t=1$ and $n=3$, here.
 \begin{gather*}
   \bigl(0, \infty , 2, 3, 5, 1, 4, 6\bigr),\
   \bigl(0, 6, 3, 5, \infty , 2, 4, 1\bigr),\
   \bigl(0, \infty , 3, 1, 4, 5, 6, 2\bigr),\
   \bigl(0, \infty , 1, 5, 6, 4, 2, 3\bigr),\\
   \bigl(0, 3, \infty , 2, 5, 4, 6, 1\bigr),\
   \bigl(0, 3, 4, 5, 1, 6, \infty , 2\bigr),\
   \bigl(0, 2, 1, 4, \infty , 3, 6, 5\bigr),\
   \bigl(0, 2, 3, \infty , 5, 6, 1, 4\bigr),\\
   \bigl(0, 5, 4, \infty , 2, 1, 6, 3\bigr),\
   \bigl(0, 5, \infty , 1, 6, 2, 3, 4\bigr),\
   \bigl(0, 3, 5, 6, \infty , 1, 2, 4\bigr),\
   \bigl(0, 5, 1, 2, 3, 6, 4, \infty \bigr),\\
   \bigl(0, 3, 1, \infty , 4, 2, 5, 6\bigr),\
   \bigl(0, 1, \infty , 3, 4, 6, 2, 5\bigr),\
   \bigl(0, \infty , 5, 4, 2, 6, 3, 1\bigr),\
   \bigl(0, 2, 4, 3, 6, \infty , 5, 1\bigr),\\
   \bigl(0, 6, 2, \infty , 1, 4, 3, 5\bigr),\
   \bigl(0, 4, \infty , 5, 2, 3, 1, 6\bigr),\
   \bigl(0, \infty , 6, 2, 1, 3, 5, 4\bigr),\
   \bigl(0, 4, 6, \infty , 3, 5, 2, 1\bigr),\\
   \bigl(0, 6, \infty , 4, 3, 1, 5, 2\bigr).
 \end{gather*}
 Again the pairing is degenerate with eigenvalues $12$, six times $4+2\sqrt{2}$, six times $4-2\sqrt{2}$, and eight times $3$.


\subsection{Larger primes}

 We list the characteristic polynomial of the matrix $(\langle \mathfrak v,\mathfrak w\rangle)_{\mathfrak v,\mathfrak w}$ for the next few primes.
 \begin{center}
   \begin{tabular}{cl}
     $p$ & char. polynomial of $\langle\cdot,\cdot\rangle$ \\ \hline \\[-1ex]
     11 &  $(X - 30) \cdot (X - 2)^{10} \cdot (X - 8)^{20} \cdot (X^2 - 10\, X + 5)^{12}$ \\[1mm]
     13 & $(X - 42) \cdot (X^3 - 19\,X^2 + 83\,X - 1)^{12} \cdot (X - 12)^{14} \cdot (X - 4)^{27} $ \\[1mm]
     17 & $(X - 72) \cdot (X - 1)^{16} \cdot (X^3 - 27\, X^2 + 195\, X - 361)^{16} \cdot (X - 16)^{17} \cdot $ \\
        & \ $\cdot (X - 8)^{18} \cdot (X^2 - 16\,X + 32)^{18}  $ \\[1mm]
     19 & $(X - 90) \cdot (X - 18)^{18} \cdot
     (X^4 - 32\, X^3 + 304\, X^2 - 768\, X + 256)^{18} \cdot $\\
         & \ $ \cdot (X - 3)^{20} \cdot (X^3 - 33\, X^2 + 315\, X - 867)^{20} $
   \end{tabular}
 \end{center}
 These values for the eigenvalues $c_W(\varphi\circ\lambda)$ coincide with Chen's computation in his table~2 in~\cite{chen2}.


\bibliographystyle{amsalpha}
\bibliography{xnonsplit}

\end{document}